
\documentclass[9pt]{article}
\usepackage{amsfonts}
\usepackage{amssymb,amsmath,euscript,bbm,xcolor,graphicx,epstopdf}
\usepackage[titletoc]{appendix}

\setcounter{MaxMatrixCols}{10}

\newtheorem{theorem}{Theorem}

\newtheorem{condition}[theorem]{Condition}

\newtheorem{lemma}[theorem]{Lemma}

\newtheorem{proposition}[theorem]{Proposition}
\newtheorem{remark}[theorem]{Remark}

\newenvironment{proof}[1][Proof]{\noindent\textbf{#1.} }{\ \rule{0.5em}{0.5em}}
\input{tcilatex}
\begin{document}

\title{A Quantitative Central Limit Theorem for the Euler-Poincar\'e
Characteristic of Random Spherical Eigenfunctions}
\author{Valentina Cammarota and Domenico Marinucci \\
Department of Mathematics, University of Rome Tor Vergata}
\maketitle

\begin{abstract}
We establish here a Quantitative Central Limit Theorem (in Wasserstein
distance) for the Euler-Poincar\'{e} Characteristic of excursion sets of
random spherical eigenfunctions in dimension 2. Our proof is based upon a
decomposition of the Euler-Poincar\'{e} Characteristic into different
Wiener-chaos components: we prove that its asymptotic behaviour is dominated
by a single term, corresponding to the chaotic component of order two. As a
consequence, we show how the asymptotic dependence on the threshold level $u$
is fully degenerate, i.e., the Euler-Poincar\'{e} Characteristic converges
to a single random variable times a deterministic function of the threshold.
This deterministic function has a zero at the origin, where the variance is
thus asymptotically of smaller order. Our results can be written as an
asymptotic second-order Gaussian Kinematic Formula for the excursion sets of
Gaussian spherical harmonics.
\end{abstract}

\begin{itemize}
\item \textbf{AMS Classification}: 60G60, 62M15, 53C65, 42C10, 33C55.

\item \textbf{Keywords and Phrases}: Euler-Poincar\'{e} characteristic,
Wiener-Chaos Expansion, Spherical Harmonics, Quantitative Central Limit
Theorem, Gaussian Kinematic Formula, Berry's Cancellation Phenomenon
\end{itemize}

\section{Introduction}

The Euler-Poincar\'{e} Characteristic is perhaps the single most important
tool for the analysis of excursion sets for Gaussian random fields;
classical textbooks on its behaviour are \cite{adlertaylor}, \cite%
{adlerstflour}, while some very recent contributions can be found for
instance in \cite{taylorvadlamani}, \cite{estradeleon, chengxiaoa}, \cite%
{chengxiaob}, \cite{MV}. As well-known the Euler-Poincar\'{e}
Characteristic, which we shall denote by $\chi (\cdot )$, is the unique
integer-valued functional, defined on the ring $\mathcal{C}$ of closed
convex sets in $\mathbb{R}^{N}$, such that $\chi (A)=0$ if $A=\emptyset $, $%
\chi (A)=1$ if $A$ is homotopic to the unit ball, and which satisfies the
additivity property 
\begin{equation*}
\chi (A\cup B)=\chi (A)+\chi (B)-\chi (A\cap B),\hspace{1cm}\text{for all\ }%
A,B\in \mathcal{C}.
\end{equation*}%
The investigation of its behaviour for the excursion sets of Gaussian random
fields has now a rather long history: seminal contributions were given by
Robert Adler and his co-authors in the seventies; the area was then very
much revived by the discovery of the beautiful Gaussian Kinematic Formula 
\cite{taylor, adlertaylor}.

More precisely, let us denote by $f$ a real valued random field defined on
some manifold $\mathbb{M}$; as usual the excursion sets are defined by, for $%
u\in {\mathbb{R}},$ 
\begin{equation*}
A_{u}(f;\mathbb{M})=\left\{ x\in \mathbb{M}:f(x)\geq u\right\} \text{ .}%
\hspace{1cm}
\end{equation*}%
We write $\mathcal{L}_{j}^{f}$, $j=0,\dots ,\text{dim}(\mathbb{M})$, for the 
\emph{Lipschitz-Killing curvatures} (also known as \emph{intrinsic volumes})
of the manifold $\mathbb{M}$ under the Riemannian metric $g^{f}$ induced by
the covariance of $f$; in other words, for $U_{x},V_{x}$ that belong to $%
T_{x}\mathbb{M}$, the tangent space to $\mathbb{M}$ at $x,$ we have 
\begin{equation}
g_{x}^{f}(U_{x},V_{x}):=\mathbb{E}[(U_{x}f)\cdot (V_{x}f)],
\label{Riemetric}
\end{equation}%
(see \cite{taylor},\cite{adlertaylor} for further details); in particular $%
\mathcal{L}_{0}$ is the Euler-Poincar\'{e} Characteristic. To introduce the
Gaussian Kinematic Formula, we need to consider also the functions $\rho
_{j} $, which are labelled Gaussian Minkowski functionals and defined by 
\begin{equation*}
{\rho }_{j}(u)=(2\pi )^{-(j+1)/2}H_{j-1}(u)e^{-u^{2}/2};
\end{equation*}%
here $H_{q}(\cdot )$ are the Hermite polynomial of order $q$, which satisfy
(see i.e., \cite{noupebook})%
\begin{equation*}
H_{-1}(u)e^{-u^{2}/2}:=1-\Phi (u),\hspace{1cm}H_{j}(u)=(-1)^{j}(\phi
(u))^{-1}\frac{d^{j}}{du^{j}}\phi (u),\hspace{0.4cm}j=0,1,\dots ,
\end{equation*}%
$\phi (\cdot )$, $\Phi (\cdot )$ denoting the standard Gaussian density and
distribution functions, respectively. For instance, the first few Hermite
polynomials are given by: 
\begin{equation*}
H_{0}(u)=1,\hspace{0.3cm}H_{1}(u)=u,\hspace{0.3cm}H_{2}(u)=u^{2}-1,\hspace{%
0.3cm}H_{3}(u)=u^{3}-3u,\dots
\end{equation*}%
For a smooth, centred, unit variance, Gaussian random fields $f:\mathbb{M}%
\rightarrow \mathbb{R}$ the Gaussian Kinematic Formula then implies that the
expected Euler-Poincar\'{e} Characteristic of the excursion sets is given by 
\begin{equation}
\mathbb{E}[\chi (A_{u}(f;\mathbb{M}))]=\sum_{j=0}^{\text{dim}(\mathbb{M})}{%
\mathcal{L}}_{j}^{f}(\mathbb{M}){\rho }_{j}(u).  \label{GKFA}
\end{equation}

More recently, a formula which can be viewed as an higher order extension of
the Gaussian Kinematic Formula for the covariance of the Euler-Poincar\'{e}
Characteristic characteristic of excursion sets at different thresholds, was
established by \cite{CMW-EPC}, who focussed on an important class of fields:
Gaussian spherical harmonics. Indeed, consider the Laplace equation 
\begin{equation*}
\Delta _{\mathbb{S}^{2}}f_{\ell }+\lambda _{\ell }f_{\ell }=0,\hspace{1cm}%
f_{\ell }:\mathbb{S}^{2}\rightarrow \mathbb{\mathbb{R}},
\end{equation*}%
where $\Delta _{\mathbb{S}^{2}}$ is the Laplace-Beltrami operator on the
unit sphere $\mathbb{S}^{2}$ and $\lambda _{\ell }=\ell (\ell +1)$, $\ell
=0,1,2,\dots $. For a given eigenvalue $-\lambda _{\ell }$, the
corresponding eigenspace is the $(2\ell +1)$-dimensional space of spherical
harmonics of degree $\ell $; we can choose an arbitrary $L^{2}$-orthonormal
basis $\left\{ Y_{\mathbb{\ell }m}(.)\right\} _{m=-\ell ,\dots ,\ell }$, and
consider random eigenfunctions of the form 
\begin{equation}
f_{\ell }(x)=\sqrt{\frac{4\pi }{2\ell +1}}\sum_{m=-\ell }^{\ell }a_{\ell
m}Y_{\ell m}(x),  \label{fell}
\end{equation}%
where the coefficients $\left\{ a_{\mathbb{\ell }m}\right\} $ are
complex-valued Gaussian variables, such that for $m\neq 0$, $\mathrm{Re}%
(a_{\ell m})$, $\mathrm{Im}(a_{\ell m})$ are zero-mean, independent Gaussian
variables with variance $\frac{1}{2}$, while $a_{\ell 0}$ follows a standard
Gaussian distribution with zero mean and unit variance; the law of the
process $\left\{ f_{\ell }(.)\right\} $ is invariant with respect to the
choice of a $L^{2}$-orthonormal basis $\{Y_{\ell m}\}$. Note that in this
paper we choose the basis of complex valued spherical harmonics instead of
the real ones that were adopted in \cite{CMW, CW}. Random spherical
harmonics arise naturally from Fourier analysis of isotropic spherical
random fields and in the investigation of quantum chaos, and they have hence
drawn quite a lot of interest in the last few years (see for instance \cite%
{canzani,DI,MR2015,nazarov,sarnakwigman,Wig,wigmanreview}); as discussed
below, we believe the results presented in this case can be extended to
Gaussian eigenfunctions on more general compact manifolds, but we leave this
issue for future research.

The random fields $\{f_{\ell }(x),\;x\in \mathbb{S}^{2}\}$ are centred,
Gaussian and isotropic, meaning that the probability laws of $f_{\ell
}(\cdot )$ and $f_{\ell }(g\cdot )$ are the same for any rotation $g\in
SO(3) $. From the addition theorem for spherical harmonics (\cite{MaPeCUP},
equation 3.42), the covariance function is given by 
\begin{equation*}
\mathbb{E}[f_{\ell }(x)f_{\ell }(y)]=P_{\ell }(\cos d(x,y)),
\end{equation*}%
where $P_{\ell }$ are the Legendre polynomials and $d(x,y)$ is the spherical
geodesic distance between $x$ and $y,$ i.e.%
\begin{equation*}
d(x,y)=\arccos (\left\langle x,y\right\rangle )\text{ . }
\end{equation*}%
An application of the Gaussian Kinematic Formula (\ref{GKFA}) gives in these
circumstances: 
\begin{equation}
\mathbb{E}[\chi (A_{u}(f_{\ell };\mathbb{S}^{2}))]=\frac{\sqrt{2}}{\sqrt{\pi 
}}\exp \{-u^{2}/2\}u\frac{\ell (\ell +1)}{2}+2[1-\Phi (u)],  \label{EPCs}
\end{equation}%
for a proof of formula (\ref{EPCs}) see, for example, \cite{MV}, Corollary
5, or \cite{chengxiaoa}, Lemma 3.5. In \cite{CMW-EPC}, the results on the
expected value were extended to an (asymptotic) evaluation of the variance;
in particular, it was shown that, as $\ell \rightarrow \infty $%
\begin{equation}
\mathrm{Var}[\chi (A_{u}(f_{\ell };\mathbb{S}^{2}))]=\frac{\ell ^{3}}{8\pi }%
(u^{3}-u)^{2}e^{-u^{2}}+O(\ell ^{2}\log ^{2}\ell ),  \label{variance0}
\end{equation}%
an expression that can be rewritten as 
\begin{equation*}
\ell \frac{\lambda _{\ell }}{4}\left\{ H_{1}(u)H_{2}(u)\phi (u)\right\}
^{2}+O(\ell ^{2}\log ^{2}\ell )
\end{equation*}%
or equivalently 
\begin{equation}
\ell \frac{\lambda _{\ell }}{4}\left\{ \left( H_{2}^{\prime
}(u)+H_{3}(u)\right) \phi (u)\right\} ^{2}+O(\ell ^{2}\log ^{2}\ell ),
\label{variance}
\end{equation}%
where $\phi (u)=\frac{1}{\sqrt{2\pi }}e^{-u^{2}/2}$ denotes as before the
standard Gaussian density function. This expression was derived by an
analytic computation, in turn a consequence of a rather hard analysis on the
asymptotic variance of critical points which was given in \cite{CMW, CW}.
Asymptotic expressions for the variances of the two other Lipschitz-Killing
curvatures for excursion sets in two dimensions, i.e. the area and (half)
the boundary length, were also given in \cite{DI}, \cite{MR2015},\cite%
{MW2014} and \cite{ROSSI2015},\cite{wigmanreview}; in \cite{CMW-EPC} all
these expressions were collected in a unitary framework and it was
conjectured that they could point out to a second-order extension of the
Gaussian Kinematic Formula for random eigenfunctions. A further contribution
in this direction is indeed given by our results in this paper, which we
present below.

\subsection{Main Results}

The main purpose of this paper is to show that the high frequency behaviour
is dominated (in the $L^{2}$ sense) by a single term with a very simple
analytic expression, whose variance is indeed given by (\ref{variance}). In
order to achieve this goal, we shall first establish the $L^{2}$ expansion
of $\chi (A_{u}(f_{\ell };\mathbb{S}^{2}))$ into Wiener chaoses (see (\ref%
{chaos}) below), which we will write as 
\begin{equation*}
\chi (A_{u}(f_{\ell };\mathbb{S}^{2}))-\mathbb{E}[\chi (A_{u}(f_{\ell };%
\mathbb{S}^{2}))]=\sum_{q=2}^{\infty }\mathtt{Proj}[\chi (A_{u}(f_{\ell };%
\mathbb{S}^{2}))|q]\text{ .}
\end{equation*}%
In the Euclidean case, a similar expansion was exploited in the recent paper 
\cite{estradeleon}; in our setting, however, the asymptotic behaviour of the
projection components turns out to be even neater; in particular, we shall
show that the projection onto the second-order chaos has the following, very
simple expression:

\begin{theorem}
\label{proj2} For all $\ell $ such that Condition \ref{Thecondition} holds,
we have 
\begin{align*}
\mathtt{Proj}[\chi (A_{u}(f_{\ell };\mathbb{S}^{2}))|2]& =\frac{\lambda
_{\ell }}{2}\left\{ H_{1}(u)H_{2}(u)\phi (u)\right\} \frac{1}{2\ell +1}%
\,\sum_{m=-\ell }^{\ell }\{|a_{\ell m}|^{2}-1\}+R(\ell ) \\
& =\frac{\lambda _{\ell }}{2}\left\{ H_{1}(u)H_{2}(u)\phi (u)\right\} \frac{1%
}{4\pi }\int_{\mathbb{S}^{2}}H_{2}(f_{\ell }(x))dx+R(\ell ),
\end{align*}%
where the remainder term $R(\ell )$ is such that $\mathbb{E}\left\vert
R(\ell )\right\vert ^{2}=O(\ell ^{2}\log \ell ),$ uniformly over $u.$\bigskip
\end{theorem}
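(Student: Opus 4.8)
The plan is to start from the Morse-theoretic (Kac--Rice) representation of the Euler--Poincar\'e characteristic of a superlevel set of a Morse function on a closed surface: whenever Condition~\ref{Thecondition} guarantees that $f_\ell$ is almost surely Morse with the needed integrability, one has the almost sure identity
\[
\chi(A_u(f_\ell;\mathbb{S}^2))=\int_{\mathbb{S}^2}\delta(\nabla f_\ell(x))\,\det(\nabla^2 f_\ell(x))\,\mathbf{1}\{f_\ell(x)\ge u\}\,dx,
\]
where $\nabla^2 f_\ell$ is the covariant Hessian; the signed determinant reproduces the alternating count of maxima, saddles and minima above the level $u$, and is therefore consistent with \eqref{EPCs}. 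To read off the chaotic components I would expand the integrand in the Hermite polynomials of the Gaussian vector $(f_\ell(x),\nabla f_\ell(x),\nabla^2 f_\ell(x))$, whose joint law is fixed by isotropy: the gradient has covariance $\tfrac{\lambda_\ell}{2}I$ and is independent of $(f_\ell,\nabla^2 f_\ell)$; the eigenvalue equation forces $\operatorname{tr}\nabla^2 f_\ell=\Delta_{\mathbb{S}^2}f_\ell=-\lambda_\ell f_\ell$, so that, writing $(h_1,h_2)$ for the trace-free part of the Hessian (independent of $f_\ell$, each of variance $\sigma^2=\tfrac{\lambda_\ell(\lambda_\ell-2)}{8}$), the determinant becomes the quadratic $\det(\nabla^2 f_\ell)=\tfrac{\lambda_\ell^2}{4}f_\ell^2-(h_1^2+h_2^2)$.

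The key structural observation I would exploit is that $\chi$ is a rotation-invariant functional of $f_\ell$, so each $\mathtt{Proj}[\chi|q]$ is a rotation-invariant element of the $q$-th Wiener chaos. Since $SO(3)$ acts irreducibly on $\{a_{\ell m}\}_{m=-\ell}^{\ell}$, Schur's lemma shows that the invariant subspace of the second chaos is one-dimensional, spanned by $\sum_{m}(|a_{\ell m}|^2-1)=\tfrac{2\ell+1}{4\pi}\int_{\mathbb{S}^2}H_2(f_\ell(x))\,dx$. Consequently $\mathtt{Proj}[\chi|2]$ must be a deterministic multiple of this single random variable, and the whole problem collapses to (i) computing that multiple as a function of $u$ and $\ell$, and (ii) estimating the error made when the multiple is replaced by its leading term.

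To compute the coefficient I would expand each of the three factors and collect the terms of total degree two. This produces exactly three types of second-order integrals, $\int H_2(f_\ell)$, $\int(h_1^2+h_2^2-2\sigma^2)$ and $\int(|\nabla f_\ell|^2-\lambda_\ell)$, each of which, by the previous paragraph, is a multiple of $\int H_2(f_\ell)$. The two multiples are pinned down exactly by integration by parts: Green's identity gives $\int_{\mathbb{S}^2}|\nabla f_\ell|^2=\lambda_\ell\int_{\mathbb{S}^2}f_\ell^2$, while the Bochner formula on $\mathbb{S}^2$ (with $\operatorname{Ric}=g$ and $\Delta f_\ell=-\lambda_\ell f_\ell$) yields $\int_{\mathbb{S}^2}\det(\nabla^2 f_\ell)=\tfrac{\lambda_\ell}{2}\int_{\mathbb{S}^2}f_\ell^2$, hence $\int_{\mathbb{S}^2}(h_1^2+h_2^2)=\tfrac{\lambda_\ell(\lambda_\ell-2)}{4}\int_{\mathbb{S}^2}f_\ell^2$. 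Feeding in the elementary Hermite coefficients --- notably $\int_u^\infty z^2H_2(z)\phi(z)\,dz=(u^3+2u)\phi(u)+2(1-\Phi(u))$, the density $\mathbb{E}\,\delta(\nabla f_\ell)=\tfrac{1}{\pi\lambda_\ell}$, and $c_2=u\phi(u)$ --- and simplifying, the $(1-\Phi(u))$ contributions cancel and the leading coefficient reduces to $\tfrac{\lambda_\ell}{2}H_1(u)H_2(u)\phi(u)$, which is exactly the asserted main term; the residual $u$-dependent pieces are of lower order in $\ell$ and are absorbed into $R(\ell)$.

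Two points require genuine care, and the second is the main obstacle. First, the integrand contains the singular factor $\delta(\nabla f_\ell)$, which is not square integrable, so the expansion above must be justified by approximating $\delta$ (for instance by $(2\varepsilon)^{-2}\mathbf{1}\{|\nabla f_\ell|<\varepsilon\}$), checking that the second-chaos projection of the approximating functionals converges, and passing to the limit. Second, and more delicately, one must bound $\mathbb{E}|R(\ell)|^2$ uniformly in $u$. Since the $u$-dependent factors are all of the form $H_k(u)\phi(u)$ and $1-\Phi(u)$, hence uniformly bounded, the uniformity in $u$ is not itself the issue; the difficulty is that controlling the $L^2$ size of the non-leading second-chaos contributions forces one to estimate integrals over $[-1,1]$ of products of the Legendre polynomial $P_\ell$ and its first and second derivatives. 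It is precisely the high-frequency asymptotics of these integrals --- in the spirit of the variance analysis of \cite{CMW,CW} --- that generate a logarithmic factor and yield the bound $\mathbb{E}|R(\ell)|^2=O(\ell^2\log\ell)$. Establishing these Legendre-type asymptotics is the step I expect to be hardest.
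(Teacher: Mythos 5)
Your proposal is correct, and it reaches the theorem by a genuinely different route from the paper's. The paper works entirely by explicit computation: it Cholesky-factorizes the $5\times 5$ covariance of $(\nabla f_\ell,\nabla^2 f_\ell)$ into independent standard Gaussians $Y_1,\dots,Y_5$, computes all projection coefficients $h_{ij},k_i$ by Gaussian integrals (Proposition \ref{hk}), expresses each sphere integral $A_{35}(\ell)$, $B_i(\ell)$ as an $m$-weighted sum of $\{|a_{\ell m}|^2-1\}$ via eight families of associated Legendre integrals (Proposition \ref{AB} and Appendix C), and only at the very end observes that the $\Phi(-u)$ terms and the $m$-dependent weights cancel. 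You instead (i) note via Schur's lemma that the $SO(3)$-invariant part of the second chaos over the irreducible $(2\ell+1)$-dimensional Gaussian space is one-dimensional, so $\mathtt{Proj}[\chi|2]$ is a priori an exact deterministic multiple of $\sum_m\{|a_{\ell m}|^2-1\}$, and (ii) evaluate the three invariant quadratic integrals that arise, namely $\int H_2(f_\ell)$, $\int(|\nabla f_\ell|^2-\lambda_\ell)$ and $\int(h_1^2+h_2^2-\lambda_\ell(\lambda_\ell-2)/4)$, by the exact pathwise identities of Green and Bochner, which makes the Legendre-function machinery of Appendices B--C unnecessary; your coordinates $(f_\ell,h_1,h_2)$ also kill all cross terms by parity, whereas the paper must carry the nonvanishing $h_{35}$ contribution. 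I checked your coefficient computation: the $(1-\Phi(u))$ contributions from the three blocks cancel and the leading coefficient is $\tfrac{\lambda_\ell}{8\pi}(u^3-u)\phi(u)$ per unit of $\int_{\mathbb{S}^2}H_2(f_\ell)$, matching the statement, with a residual of the form $\tfrac{u\phi(u)}{4\pi}\int H_2(f_\ell)$ whose variance is $O(\ell^{-1})$, comfortably inside $R(\ell)$. Two remarks. First, you still need the $L^2$ justification of the delta approximation and of exchanging $\lim_{\varepsilon\to 0}$ with $\mathtt{Proj}[\cdot|2]$; this is exactly the paper's Lemma \ref{XXe}, and it is where Condition \ref{Thecondition} and a Kac--Rice second-moment bound enter. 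Second, you have mislocated the difficulty: in your approach no Legendre asymptotics and no logarithm arise at all, since the remainder is exact and of much smaller variance than $\ell^2\log\ell$; indeed even in the paper the $\log\ell$ is not produced by the second-chaos computation (whose remainder there is $O(\ell^2)$) but only enters later through the variance of the full Euler--Poincar\'e characteristic taken from \cite{CMW,CW}. So the step you expected to be hardest is one your own argument has already eliminated; that is precisely what your route buys, at the price of the representation-theoretic input.
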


Note that the variance of the first term on the right-hand side is equal to%
\begin{eqnarray*}
&&Var\left[ \frac{\lambda _{\ell }}{2}\left\{ H_{1}(u)H_{2}(u)\phi
(u)\right\} \frac{1}{2\ell +1}\,\sum_{m=-\ell }^{\ell }\{|a_{\ell m}|^{2}-1\}%
\right] \\
&=&\frac{\ell ^{2}(\ell +1)^{2}}{4}\phi ^{2}(u)(u^{3}-u)^{2}\frac{2}{2\ell +1%
}=\frac{\ell ^{3}}{8\pi }(u^{3}-u)^{2}e^{-u^{2}}+O(\ell ^{2})\text{ ,}
\end{eqnarray*}%
which is asymptotically equivalent to the variance of the Euler-Poincar\'{e}
Characteristic reported in (\ref{variance0}), so that the contribution from
all the remaining Wiener chaos terms is indeed of smaller order for every $%
u\neq 0.$ In view of this result, the investigation of the asymptotic
distribution becomes indeed much less difficult, and we can prove the second
main result of this paper, i.e.,

\begin{theorem}
\label{clt} \bigskip There exists a constant $K>0$ such that, for all $\ell $
fulfilling Condition \ref{Thecondition} and uniformly over $u\neq 0,$ we have%
\begin{equation*}
\mathbb{E}\left\{ \frac{\chi (A_{u}(f_{\ell };\mathbb{S}^{2}))-\mathbb{E}%
[\chi (A_{u}(f_{\ell };\mathbb{S}^{2}))]-\mathtt{Proj}[\chi (A_{u}(f_{\ell };%
\mathbb{S}^{2}))|2]}{\sqrt{\mathrm{Var}[\chi (A_{u}(f_{\ell };\mathbb{S}%
^{2}))]}}\right\} ^{2}\leq K\frac{\log \ell }{\ell },
\end{equation*}%
and 
\begin{equation*}
d_{W}\left( \frac{\chi (A_{u}(f_{\ell };\mathbb{S}^{2}))-\mathbb{E}[\chi
(A_{u}(f_{\ell };\mathbb{S}^{2}))]}{\sqrt{\mathrm{Var}[\chi (A_{u}(f_{\ell };%
\mathbb{S}^{2}))]}},Z\right) \leq K\sqrt{\frac{\log \ell }{\ell }},\text{ }
\end{equation*}%
$d_{W}(.,.)$ denoting as usual the Wasserstein distance and $Z\sim N(0,1)$ a
standard Gaussian variable.
\end{theorem}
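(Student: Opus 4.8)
The plan is to derive both displays of Theorem~\ref{clt} from Theorem~\ref{proj2} together with the variance asymptotics \eqref{variance0}, viewing the first estimate (an $L^{2}$ bound) as the engine that drives the second (the quantitative CLT). Throughout write $\sigma_{\ell}^{2}(u)=\mathrm{Var}[\chi(A_{u}(f_{\ell};\mathbb{S}^{2}))]$ and abbreviate the chaos projections by $P_{q}=\mathtt{Proj}[\chi(A_{u}(f_{\ell};\mathbb{S}^{2}))\,|\,q]$, so that $\chi-\mathbb{E}\chi=\sum_{q\geq 2}P_{q}$ with the $P_{q}$ mutually orthogonal in $L^{2}$. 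By this orthogonality the numerator in the first display equals exactly $\mathbb{E}\{\chi-\mathbb{E}\chi-P_{2}\}^{2}=\sum_{q\geq 3}\mathbb{E}[P_{q}^{2}]$, so the first assertion is equivalent to $\sum_{q\geq 3}\mathbb{E}[P_{q}^{2}]\leq K\,\ell^{-1}\log\ell\;\sigma_{\ell}^{2}(u)$.

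The main obstacle is to bound $\sum_{q\geq 3}\mathbb{E}[P_{q}^{2}]$ \emph{directly}, and I would stress that one cannot reach the sharp rate by merely subtracting $\mathrm{Var}[P_{2}]$ from $\sigma_{\ell}^{2}$: writing $P_{2}=L+R$ with $L$ the explicit leading term of Theorem~\ref{proj2} and $\mathbb{E}[R^{2}]=O(\ell^{2}\log\ell)$, the cross term $\mathrm{Cov}(L,R)$ is controlled by Cauchy--Schwarz only at order $\ell^{5/2}(\log\ell)^{1/2}$, which is far too crude. Instead I would begin from the Kac--Rice/Morse-theoretic representation of $\chi(A_{u})$ as an integral over $\mathbb{S}^{2}$ of a fixed deterministic function of the Gaussian vector $(f_{\ell},\nabla f_{\ell},\nabla^{2}f_{\ell})$, Hermite-expand that integrand pointwise, and identify $P_{q}$ as a single multiple Wiener--It\^o integral. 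Then $\mathbb{E}[P_{q}^{2}]$ becomes a double integral over $\mathbb{S}^{2}\times\mathbb{S}^{2}$ of products of $P_{\ell}(\cos d(x,y))$ and its first and second derivatives, the total number of factors being $q$. The decisive input is the asymptotics of the moments $\int_{\mathbb{S}^{2}}|P_{\ell}(\langle x,y\rangle)|^{q}\,dx$ and their derivative analogues, which for $q\geq 3$ are $O(\ell^{-1}\log\ell)$ or smaller (a manifestation of Berry's cancellation), so that $\sum_{q\geq 3}\mathbb{E}[P_{q}^{2}]=O(\ell^{2}\log\ell)$ uniformly in $u$. Since $\sigma_{\ell}^{2}(u)\asymp \ell^{3}(u^{3}-u)^{2}e^{-u^{2}}$ for $u$ bounded away from the zeros of $u^{3}-u$, division yields the stated $\ell^{-1}\log\ell$ rate; the exclusion of $u=0$, and the extra care needed near $u=\pm 1$, enter precisely because the leading variance degenerates at those points and the numerator and denominator must be compared at matching order. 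This chaos estimate is the technical heart of the argument.

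Granting the first display, the second follows from a triangle inequality for the Wasserstein distance. Using $d_{W}(X,Y)\leq(\mathbb{E}(X-Y)^{2})^{1/2}$, the first estimate gives $d_{W}(\sigma_{\ell}^{-1}(\chi-\mathbb{E}\chi),\,\sigma_{\ell}^{-1}P_{2})\leq K(\ell^{-1}\log\ell)^{1/2}$. By Theorem~\ref{proj2}, $\sigma_{\ell}^{-1}P_{2}$ differs from the normalized leading term $S/\sqrt{\mathrm{Var}(S)}$, with $S=(2\ell+1)^{-1}\sum_{m}(|a_{\ell m}|^{2}-1)$, only by an $L^{2}$-error of size $(\ell^{-1}\log\ell)^{1/2}$ coming from $R$ and by a variance-renormalization error $|\sigma_{\ell}^{-1}-\mathrm{Var}(S)^{-1/2}(\ldots)|$ of strictly smaller order (again from \eqref{variance0}). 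It then remains to show $d_{W}(S/\sqrt{\mathrm{Var}(S)},Z)=O(\ell^{-1/2})$: because $S$ is a normalized sum of the $\simeq\ell$ independent centred chi-square/exponential variables $|a_{\ell m}|^{2}-1$, all lying in the second Wiener chaos, this is a Berry--Esseen bound, equivalently a fourth-moment estimate in the sense of Nourdin--Peccati (\cite{noupebook}). Collecting the three contributions, all of which are $O((\ell^{-1}\log\ell)^{1/2})$, yields the claimed rate $K\sqrt{\ell^{-1}\log\ell}$.
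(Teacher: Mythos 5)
Your handling of the second display coincides with the paper's: a triangle inequality for $d_{W}$, the bound $d_{W}(X,Y)\leq(\mathbb{E}|X-Y|^{2})^{1/2}$ to reduce everything to the normalized leading term of Theorem \ref{proj2}, and then a fourth-moment/Berry--Esseen estimate for the normalized sum of the $2\ell+1$ independent variables $|a_{\ell m}|^{2}-1$ (the paper invokes Corollary 5.2.10 of \cite{noupebook}). The problem is the first display, which you rightly identify as the engine of the whole theorem. Writing $P_{q}=\mathtt{Proj}[\chi(A_{u}(f_{\ell};\mathbb{S}^{2}))|q]$, you correctly reduce it to bounding $\sum_{q\geq 3}\mathbb{E}[P_{q}^{2}]$, but your proposed route --- Hermite-expanding the Kac--Rice integrand, realizing each $P_{q}$ as a multiple Wiener--It\^{o} integral, and controlling $\mathbb{E}[P_{q}^{2}]$ for every $q\geq 3$ through moment asymptotics of $P_{\ell}(\langle x,y\rangle)$ --- is a program, not a proof. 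The input you cite is not even the right one as stated: the kernels in $\mathbb{E}[P_{q}^{2}]$ involve first and second derivatives of $P_{\ell}$, which contribute extra powers of $\ell$, and what must actually be shown is that after normalization the combined contribution of all chaoses $q\geq 3$ is $O(\ell^{2}\log\ell)$ \emph{uniformly in} $u$. That is precisely the hard analysis of \cite{CMW}, \cite{CW} and the Euclidean analogue in \cite{estradeleon}, and none of it is supplied in your sketch. As written, the key estimate of the theorem is assumed rather than derived.

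The paper takes a much shorter path that you explicitly reject: it imports the full variance asymptotics $\mathrm{Var}[\chi(A_{u}(f_{\ell};\mathbb{S}^{2}))]=\tfrac{1}{4}\ell^{3}\{u\phi(u)(u^{2}-1)\}^{2}+O(\ell^{2}\log\ell)$ from \cite{CMW}, \cite{CW} (cf.\ \eqref{variance0}), computes $\mathrm{Var}[P_{2}]$ explicitly from Theorem \ref{proj2} together with $\mathrm{Var}[\sum_{m}(|a_{\ell m}|^{2}-1)]=2(2\ell+1)$ (Lemma \ref{Variance2ndchaos}), and obtains the first display as the exact difference $\mathrm{Var}[\chi]-\mathrm{Var}[P_{2}]$, which equals $\sum_{q\geq 3}\mathbb{E}[P_{q}^{2}]$ by orthogonality. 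So the higher-order chaos expansion is never needed; the one nontrivial external input is the total-variance expansion with its $O(\ell^{2}\log\ell)$ error, and you must either import it or actually execute your chaos-by-chaos programme. Your observation about the cross term $\mathrm{Cov}(L,R)$ between the explicit leading term and the remainder of Theorem \ref{proj2} is a legitimate point about what a naive subtraction delivers (Cauchy--Schwarz alone gives only the rate $\sqrt{\log\ell/\ell}$ in the first display), and the paper deals with it only implicitly; but identifying an obstruction is not the same as overcoming it, and your alternative leaves the decisive bound unproven.
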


We remark that the possibility to obtain simple, analytic formulae for the
second-order chaos component and its variance, together with sharp bounds on
the convergence in Wasserstein distance, are both peculiar features which do
not have analogous counterparts for the Euclidean domain results (see i.e., 
\cite{estradeleon}). Also, note that the asymptotic dependence on the
threshold level $u$ is fully degenerate, i.e. the Euler-Poincar\'{e}
Characteristic converges in mean square to a single random variable times a
deterministic function of the threshold, in the high-frequency limit $\ell
\rightarrow \infty $. All these features follow by the fact that a single
chaotic projection (the component of order 2) is dominating the asymptotic
behaviour of the Euler-Poincar\'{e} Characteristic; in the next subsection
we discuss this issue and cast into the more general framework of
Lipschitz-Killing curvatures for excursion sets of Gaussian eigenfunctions.

\subsection{Discussion}

\subsubsection{Some Recent Results on Lipschitz-Killing curvatures for
Gaussian Eigenfunctions}

The fact that the asymptotic behaviour of the Lipschitz-Killing curvatures
in the high frequency - high energy limit is dominated by the second-order
chaotic component, which disappears at level $u=0,$ seems to be of a general
nature when dealing with excursion sets of random eigenfunctions. The
simplest example of a Lipschitz-Killing curvature is given of course by the
excursion area; in this case, it was shown in \cite{DI} that 
\begin{align*}
\mathtt{Proj}[\mathcal{L}_{2}(A_{u}(f_{\ell };\mathbb{S}^{2}))|2]& =\frac{1}{%
2}u\phi (u)\int_{\mathbb{S}^{2}}H_{2}(f_{\ell }(x))dx \\
& =\frac{1}{2}u\phi (u)\frac{4\pi }{2\ell +1}\sum_{m=-\ell }^{\ell }\left\{
|a_{\ell m}|^{2}-1\right\}
\end{align*}%
and moreover, as $\ell \rightarrow \infty $, 
\begin{equation*}
\lim_{\ell \rightarrow \infty }\frac{\mathrm{Var}[\mathtt{Proj}[\mathcal{L}%
_{2}(A_{u}(f_{\ell };\mathbb{S}^{2}))|2]]}{\mathrm{Var}[\mathcal{L}%
_{2}(A_{u}(f_{\ell };\mathbb{S}^{2}))]}=O\left( \frac{1}{\ell }\right) ,
\end{equation*}%
\begin{equation*}
\lim_{\ell \rightarrow \infty }\mathbb{E}\left\{ \frac{\mathcal{L}%
_{2}(A_{u}(f_{\ell };\mathbb{S}^{2}))-\mathbb{E}[\mathcal{L}%
_{2}(A_{u}(f_{\ell };\mathbb{S}^{2}))]-\mathtt{Proj}[\mathcal{L}%
_{2}(A_{u}(f_{\ell };\mathbb{S}^{2}))|2]}{\sqrt{\mathrm{Var}[\mathcal{L}%
_{2}(A_{u}(f_{\ell };\mathbb{S}^{2}))]}}\right\} ^{2}=0.
\end{equation*}

\medskip This results were further investigated and extended to spheres of
arbitrary dimensions in \cite{MR2015}; again, easy consequences are

\begin{enumerate}
\item A Quantitative Central Limit Theorem in Wasserstein distance;

\item Asymptotic degeneracy of the multivariate distribution for different
thresholds $(u_{1},...,u_{p}),$ i.e., perfect correlation of the excursion
area at different thresholds

\item The fact that the variance at level $u=0$ is lower-order (related to
the so-called `Berry's cancellation phenomenon', see below).
\end{enumerate}

Another step in this literature was the analysis of the boundary length for $%
u=0$ for random eigenfunctions on the torus, led by \cite{MPRW2015}; i.e.,
the so-called nodal lines for arithmetic random waves, whose variance was
firstly established in \cite{KKW}. It should be noted that the nodal lines
for arithmetic random waves are indeed (twice) their Lipschitz-Killing
curvature of order 1 for $u=0,$ i.e. $\mathcal{L}_{1}(A_{0}(e_{k};\mathbb{T}%
^{2})),$ where we use $\mathbb{T}^{2}$ to denote the two-dimensional torus
and $e_{k}$ to denote its eigenfunctions, and $k$ is an integer such that $%
k^{2}=k_{1}^{2}+k_{2}^{2}$, for some $k_{1},k_{2}\in \mathbb{N}$. The
findings in \cite{MPRW2015} are indeed perfectly complementary to our
investigation here: it is shown that the behaviour of nodal lines is
dominated by a single term that corresponds to the fourth-order chaos
component, consistent with the vanishing of the second-order term when $u=0.$
Furthermore, in the (so far unpublished) Ph.D. thesis \cite{ROSSI2015} it is
shown that for the first Lipschitz-Killing curvature, i.e. half the length
of level curves of excursion sets of spherical eigenfunctions, one has also
(Proposition 7.3.1, page 116) 
\begin{eqnarray*}
\mathtt{Proj}[\mathcal{L}_{1}(A_{u}(f_{\ell };\mathbb{S}^{2}))|2] &=&\frac{1%
}{2}\sqrt{\frac{\ell (\ell +1)}{2}}\sqrt{\frac{\pi }{8}}u^{2}\phi (u)\int_{%
\mathbb{S}^{2}}H_{2}(f_{\ell }(x))dx \\
&=&\frac{1}{2}\sqrt{\frac{\ell (\ell +1)}{2}}\sqrt{\frac{\pi }{8}}u^{2}\phi
(u)\frac{4\pi }{2\ell +1}\sum_{m=-\ell }^{\ell }\left\{ |a_{\ell
m}|^{2}-1\right\} \text{ ,}
\end{eqnarray*}%
and thus again, as $\ell \rightarrow \infty $, 
\begin{equation*}
\lim_{\ell \rightarrow \infty }\frac{\mathrm{Var}[\mathtt{Proj}[\mathcal{L}%
_{1}(A_{u}(f_{\ell };\mathbb{S}^{2}))|2]]}{\mathrm{Var}[\mathcal{L}%
_{1}(A_{u}(f_{\ell };\mathbb{S}^{2}))]}=O\left( \frac{1}{\ell }\right) ,
\end{equation*}%
\begin{equation*}
\lim_{\ell \rightarrow \infty }\mathbb{E}\left\{ \frac{\mathcal{L}%
_{1}(A_{u}(f_{\ell };\mathbb{S}^{2}))-\mathbb{E}[\mathcal{L}%
_{1}(A_{u}(f_{\ell };\mathbb{S}^{2}))]-\mathtt{Proj}[\mathcal{L}%
_{1}(A_{u}(f_{\ell };\mathbb{S}^{2}))|2]}{\sqrt{\mathrm{Var}[\mathcal{L}%
_{1}(A_{u}(f_{\ell };\mathbb{S}^{2}))]}}\right\} ^{2}=0.
\end{equation*}

\subsubsection{\protect\bigskip A Second-Order Gaussian Kinematic Formula
for Random Spherical Harmonics}

The expressions we reported so far can be summarized into a single analytic
form as follows, for $k=0,1,2$, 
\begin{equation*}
\mathtt{Proj}[\mathcal{L}_{k}(A_{u}(f_{\ell };\mathbb{S}^{2}))|2]
\end{equation*}%
\begin{equation}
=\frac{1}{2}\left[ 
\begin{array}{c}
2 \\ 
k%
\end{array}%
\right] \left\{ \frac{\lambda _{\ell }}{2}\right\}
^{(2-k)/2}H_{1}(u)H_{2-k}(u)\phi (u)\frac{1}{(2\pi )^{(2-k)/2}}\int_{\mathbb{%
S}^{2}}H_{2}(f_{\ell }(x))dx+a_{k}(\ell ),  \label{2GKF}
\end{equation}%
here, again we adopted the usual convention $H_{-1}(u)\phi (u):=1-\Phi (u)$;
as in \cite{adlertaylor} we have introduced the flag coefficients%
\begin{equation*}
\left[ 
\begin{array}{c}
2 \\ 
0%
\end{array}%
\right] =\left[ 
\begin{array}{c}
2 \\ 
2%
\end{array}%
\right] =1\text{ , }\left[ 
\begin{array}{c}
2 \\ 
1%
\end{array}%
\right] =\frac{\pi }{2}\text{ ,}
\end{equation*}%
and 
\begin{equation*}
a_{k}(\ell )=\left\{ 
\begin{array}{cc}
O_{p}(\ell ) & \text{for }k=0, \\ 
0 & \text{for }k=1,2%
\end{array}%
\right. \text{ .}
\end{equation*}%
It is important to notice that $\frac{\lambda _{\ell }}{2}=P_{\ell }^{\prime
}(1)$ represents the derivative of the covariance function of random
spherical harmonics at the origin, so that the term%
\begin{equation*}
\frac{\lambda _{\ell }}{2}\int_{\mathbb{S}^{2}}H_{2}(f_{\ell }(x))dx
\end{equation*}%
can be viewed as a (random) measure of the sphere induced by the Riemannian
metric (\ref{Riemetric}); recall indeed that for eigenfunctions $f_{\ell }$
on the sphere $\mathbb{S}^{2}$ the term ${\mathcal{L}}_{2}^{f_{\ell }}(%
\mathbb{S}^{2})$ which appears in (\ref{GKFA}) is exactly given by the area
of the sphere with radius $\left\{ \frac{\lambda _{\ell }}{2}\right\}
^{1/2}, $ i.e., 
\begin{equation*}
{\mathcal{L}}_{2}^{f_{\ell }}(\mathbb{S}^{2})=\frac{\lambda _{\ell }}{2}%
\times 4\pi =\frac{\lambda _{\ell }}{2}\int_{\mathbb{S}^{2}}H_{0}(f_{\ell
}(x))dx\text{ .}
\end{equation*}%
At this stage, it seems very natural to notice that the expected value of
Lipschitz-Killing curvatures can always be written as their projection on
the Wiener chaos of order zero, i.e. in our case 
\begin{equation*}
\mathbb{E}[\mathcal{L}_{k}(A_{u}(f_{\ell };\mathbb{S}^{2}))]=\mathtt{Proj}[%
\mathcal{L}_{k}(A_{u}(f_{\ell };\mathbb{S}^{2}))|0]\text{ ,}
\end{equation*}%
so that we can rewrite the Gaussian Kinematic Formula with an expression
which is remarkably similar to (\ref{2GKF}): 
\begin{equation*}
\mathtt{Proj}[\mathcal{L}_{k}(A_{u}(f_{\ell };\mathbb{S}^{2}))|0]
\end{equation*}%
\begin{equation}
=\left[ 
\begin{array}{c}
2 \\ 
k%
\end{array}%
\right] \left\{ \frac{\lambda _{\ell }}{2}\right\} ^{(2-k)/2}H_{1-k}(u)\phi
(u)\frac{1}{(2\pi )^{(2-k)/2}}\int_{\mathbb{S}^{2}}H_{0}(f_{\ell
}(x))dx+b_{k}(\ell )\text{ ,}  \label{1GKF}
\end{equation}%
where 
\begin{equation*}
b_{k}(\ell )=\left\{ 
\begin{array}{cc}
2(1-\Phi (u))=O(1) & \text{for }k=0, \\ 
0 & \text{for }k=1,2%
\end{array}%
\right. .
\end{equation*}%
The analogy between (\ref{2GKF}) and (\ref{1GKF}) is self-evident; more
explicitly, combining the Gaussian Kinematic Formula with the results from 
\cite{DI}, \cite{MR2015}, \cite{ROSSI2015} and those presented in this paper
we have the following expressions for the projections $\mathtt{Proj}[%
\mathcal{L}_{k}(A_{u}(f_{\ell };\mathbb{S}^{2}))|a],$ $k=0,1,2,$ $a=0,2$:

\emph{a) Excursion Area} ($k=2$)%
\begin{equation*}
\mathtt{Proj}[\mathcal{L}_{2}(A_{u}(f_{\ell };\mathbb{S}^{2}))|0]=\left\{ 
\frac{\lambda _{\ell }}{2}\right\} ^{0}\left[ H_{-1}(u)\phi (u)\right] \int_{%
\mathbb{S}^{2}}H_{0}(f_{\ell }(x))dx\text{ ,}
\end{equation*}%
\begin{equation*}
\mathtt{Proj}[\mathcal{L}_{2}(A_{u}(f_{\ell };\mathbb{S}^{2}))|2]=\frac{1}{2}%
\left\{ \frac{\lambda _{\ell }}{2}\right\} ^{0}\left[ H_{0}(u)H_{1}(u)\phi
(u)\right] \int_{\mathbb{S}^{2}}H_{2}(f_{\ell }(x))dx\text{ ;}
\end{equation*}

\emph{b) (Half) Boundary Length} ($k=1$)%
\begin{equation*}
\mathtt{Proj}[\mathcal{L}_{1}(A_{u}(f_{\ell };\mathbb{S}^{2}))|0]=\left\{ 
\frac{\lambda _{\ell }}{2}\right\} ^{1/2}\sqrt{\frac{\pi }{8}}\left[
H_{0}(u)\phi (u)\right] \int_{\mathbb{S}^{2}}H_{0}(f_{\ell }(x))dx\text{ ,}
\end{equation*}%
\begin{equation*}
\mathtt{Proj}[\mathcal{L}_{1}(A_{u}(f_{\ell };\mathbb{S}^{2}))|2]=\frac{1}{2}%
\left\{ \frac{\lambda _{\ell }}{2}\right\} ^{1/2}\sqrt{\frac{\pi }{8}}\left[
H_{1}^{2}(u)\phi (u)\right] \int_{\mathbb{S}^{2}}H_{2}(f_{\ell }(x))dx\text{
;}
\end{equation*}

\emph{c) Euler-Poncar\'{e} Characteristic} ($k=0$)%
\begin{equation*}
\mathtt{Proj}[\mathcal{L}_{0}(A_{u}(f_{\ell };\mathbb{S}^{2}))|0]=\left\{ 
\frac{\lambda _{\ell }}{2}\right\} \left[ H_{1}(u)\phi (u)\right] \frac{1}{%
2\pi }\int_{\mathbb{S}^{2}}H_{0}(f_{\ell }(x))dx+2\left\{ 1-\Phi (u)\right\} 
\text{ ,}
\end{equation*}%
\begin{equation*}
\mathtt{Proj}[\mathcal{L}_{0}(A_{u}(f_{\ell };\mathbb{S}^{2}))|2]=\frac{1}{2}%
\left\{ \frac{\lambda _{\ell }}{2}\right\} \left[ H_{2}(u)H_{1}(u)\phi (u)%
\right] \frac{1}{2\pi }\int_{\mathbb{S}^{2}}H_{2}(f_{\ell }(x))dx+O_{p}(1)%
\text{ .}
\end{equation*}%
\bigskip 

\subsection{Some comments and conjectures}

We believe that the results we presented in this paper can shed some further
light on a number of geometric features which have been noted in the
literature on random spherical eigenfunctions. In particular, as noted
earlier the asymptotic distribution for each of these Lipschitz-Killing
curvatures is fully degenerate, as it is given by a single (standard
Gaussian) random variable times a deterministic function of the threshold
level $u$. Degeneracy of the limiting distribution provides an easy
explanation for the full asymptotic correlation at different levels $u$
which was earlier noted for the Euler-Poincar\'{e} Characteristic by \cite%
{CMW-EPC}; for the length of level curves this phenomenon was observed in 
\cite{wigmanreview} and addressed in \cite{ROSSI2015}, (see also \cite%
{MPRW2015} for toral eigenfunctions), while for the excursion area
asymptotic degeneracy was established by \cite{DI} and \cite{MR2015}.

On the other hand, as noted already for the case of nodal lines by \cite%
{MPRW2015}, the dominance of the second-order Wiener chaos and its
disappearance for $u=0$ seems to provide a general explanation for the
so-called Berry's cancellation phenomenon (see i.e., \cite{Berry 1977}, \cite%
{Wig}): i.e., the fact that the variance of these geometric functionals is
of lower order in the (`nodal') case $u=0$ than for any other level $u\neq 0$%
. Indeed, the different asymptotic behaviour of these variances is due to
the disappearance of the second-order Wiener chaos term; for the case of
nodal length of arithmetic (toroidal) eigenfunctions, it was shown in \cite%
{MPRW2015} that the fourth-order chaos then dominates, while for the
excursion area the case $u=0$ amounts to the so-called \emph{Defect}, where
all the odd-order chaotic components contribute in the limit (see \cite%
{MW2014}).

We expect these phenomena to hold in greater generality; in particular, we
conjecture that for random eigenfunctions on compact manifolds with
increasing spectral multiplicities the asymptotic behaviour of
Lipschitz-Killing curvatures of excursion sets at any level $u\neq 0$ is
dominated, in the high-energy limit, by the projection on the second-order
Wiener chaos; this leading component appears to vanish in the nodal case $%
u=0 $, hence yielding a phase transition to lower order variance behaviour.
Among the compact manifolds with eigenfunctions which exhibit spectral
degeneracies (i.e., eigenspaces of dimensions larger than one) there are, of
course the sphere $\mathbb{S}^{d}$ and the torus $\mathbb{T}^{d}$ in
arbitrary dimensions $d\geq 2$; a future challenge for research is the
derivation of general expressions akin to (\ref{2GKF}) for the behaviour of
Lipschitz-Killing curvatures in these more general settings.

\subsection{\protect\medskip Plan of the paper}

The plan of this paper is as follows: in Section \ref{Background and
Notation} we review some background material and our notation; Section \ref%
{Wiener2} discusses the projection of the Euler-Poincar\'{e} Characteristic
into second-order chaos, while Section \ref{Variance_CLT} collects the exact
computation of the Variance and the proof of the quantitative Central Limit
Theorem. A number of technical and auxiliary results are collected in the
Appendixes.

\subsection{Acknowledgements}

Research supported by the ERC Grant n. 277742 \emph{Pascal, }(Probabilistic
and Statistical techniques for Cosmological Applications)\emph{. }We are
grateful to Giovanni Peccati, Maurizia Rossi and Igor Wigman for a number of
insightful discussions.\emph{\ }

\section{Background and Notation}

\label{Background and Notation}

\subsection{Morse theory}

As it is customary in this branch of literature, we shall exploit a general
representation for the Euler-Poincar\'{e} Characteristic in terms of
critical points by means of so-called Morse Theory (see \cite{adlertaylor},
Section 9.3). Indeed, assuming that $\mathbb{M}$ is a $C^{2}$ manifold
without boundary in $\mathbb{R}^{N}$ and that $h\in C^{2}(\mathbb{M})$ is a
Morse function on $\mathbb{M}$ (i.e. its Hessian is non-degenerate at the
critical points), it is well-known that the Euler-Poincar\'{e}
Characteristic can be expressed as an alternating sum: 
\begin{equation}
\chi (\mathbb{M})=\sum_{j=0}^{\text{dim}(\mathbb{M})}(-1)^{j}\mu _{j}(%
\mathbb{M},h),  \label{morse_th}
\end{equation}%
where $\mu _{j}(\mathbb{M},h)$ is the number of critical points of $h$ with
Morse index $j$, i.e., the Hessian of $h$ has $j$ negative eigenvalues; for
a proof of (\ref{morse_th}) see \cite{adlertaylor}, Corollary 9.3.3. To
establish our results we will make use of (\ref{morse_th}) in the case of
excursion sets of spherical eigenfunctions; to this aim, we recall some
basic differential geometry on $\mathbb{S}^{2}$, along the same lines as we
did in \cite{CMW-EPC}. More precisely, let us recall that the metric tensor
on the tangent plane $T(\mathbb{S}^{2})$ is given by 
\begin{equation*}
g(\theta ,\varphi )=\left[ 
\begin{matrix}
1 & 0 \\ 
0 & \sin ^{2}\theta%
\end{matrix}%
\right] .
\end{equation*}%
For $x=(\theta ,\varphi )\in \mathbb{S}^{2}\setminus \{N,S\}$ ($N,S$ are the
north and south poles i.e. $\theta =0$ and $\theta =\pi $ respectively), the
vectors 
\begin{equation*}
e_{1}^{x}=\vec{e}_{\theta }=\frac{\partial }{\partial \theta },\hspace{2cm}%
e_{2}^{x}=\vec{e}_{\varphi }=\frac{1}{\sin \theta }\frac{\partial }{\partial
\varphi },
\end{equation*}%
constitute an orthonormal basis for $T_{x}(\mathbb{S}^{2})$; in these system
of coordinates the gradient is given by $\nabla =(\frac{\partial }{\partial
\theta },\frac{1}{\sin \theta }\frac{\partial }{\partial \varphi })$. As
usual, the Hessian of a function $f\in C^{2}(\mathbb{S}^{2})$ is defined as
the bilinear symmetric map from $C^{1}(T(\mathbb{S}^{2}))\times C^{1}(T(%
\mathbb{S}^{2}))$ to $C^{0}(\mathbb{S}^{2})$ given by 
\begin{equation*}
\nabla _{E}^{2}f(X,Y)=XYf-\nabla _{X}Yf,\hspace{1cm}X,Y\in T(\mathbb{S}^{2}),
\end{equation*}%
where $\nabla _{X}$ denotes Levi-Civita connection (see e.g. \cite%
{adlertaylor}, Chapter 7 for more discussion and details). For our
computations to follow we shall need the matrix-valued process $\nabla
_{E}^{2}{f_{\ell }}(x)$ with elements given by 
\begin{equation*}
\{\nabla _{E}^{2}{f_{\ell }}(x)\}_{a,b=\theta ,\varphi }=\{(\nabla ^{2}{%
f_{\ell }}(x))(\vec{e}_{a},\vec{e}_{b})\}_{a,b=\theta ,\varphi },
\end{equation*}%
where $E=\left\{ \vec{e}_{\theta },\vec{e}_{\varphi }\right\} .$ With the
standard system of spherical coordinates, the analytic expression for this
matrix is given by 
\begin{equation*}
\nabla _{E}^{2}f_{\ell }(x)=\left[ 
\begin{matrix}
\frac{\partial ^{2}}{\partial \theta ^{2}}-\Gamma _{\theta \theta }^{\theta }%
\frac{\partial }{\partial \theta }-\Gamma _{\theta \theta }^{\varphi }\frac{%
\partial }{\partial \varphi } & \frac{1}{\sin \theta }[\frac{\partial ^{2}}{%
\partial \theta \partial \varphi }-\Gamma _{\varphi \theta }^{\varphi }\frac{%
\partial }{\partial \varphi }-\Gamma _{\theta \varphi }^{\theta }\frac{%
\partial }{\partial \theta }] \\ 
\frac{1}{\sin \theta }[\frac{\partial ^{2}}{\partial \theta \partial \varphi 
}-\Gamma _{\varphi \theta }^{\varphi }\frac{\partial }{\partial \varphi }%
-\Gamma _{\theta \varphi }^{\theta }\frac{\partial }{\partial \theta }] & 
\frac{1}{\sin ^{2}\theta }[\frac{\partial ^{2}}{\partial \varphi ^{2}}%
-\Gamma _{\varphi \varphi }^{\varphi }\frac{\partial }{\partial \varphi }%
-\Gamma _{\varphi \varphi }^{\theta }\frac{\partial }{\partial \theta }]%
\end{matrix}%
\right]
\end{equation*}%
\begin{equation*}
=\left[ 
\begin{matrix}
\frac{\partial ^{2}}{\partial \theta ^{2}} & \frac{1}{\sin \theta }[\frac{%
\partial ^{2}}{\partial \theta \partial \varphi }-\frac{\cos \theta }{\sin
\theta }\frac{\partial }{\partial \varphi }] \\ 
\frac{1}{\sin \theta }[\frac{\partial ^{2}}{\partial \theta \partial \varphi 
}-\frac{\cos \theta }{\sin \theta }\frac{\partial }{\partial \varphi }] & 
\frac{1}{\sin ^{2}\theta }[\frac{\partial ^{2}}{\partial \varphi ^{2}}+\sin
\theta \cos \theta \frac{\partial }{\partial \theta }]%
\end{matrix}%
\right] ,
\end{equation*}%
where $\Gamma _{ab}^{c}$ are the usual Christoffel symbols, see e.g. \cite%
{chavel} Section I.1, from which we can compute the Levi-Civita connection: 
\begin{equation*}
\nabla _{\vec{e}_{a}}\vec{e}_{b}=\Gamma _{ab}^{\theta }\vec{e}_{\theta
}+\Gamma _{ab}^{\varphi }\vec{e}_{\varphi },\hspace{0.7cm}a,b=\theta
,\varphi .
\end{equation*}%
More explicitly, Christoffel symbols for $\mathbb{S}^{2}$ are given by 
\begin{equation*}
\Gamma _{\theta \varphi }^{\theta }=\Gamma _{\theta \theta }^{\theta
}=\Gamma _{\varphi \varphi }^{\varphi }=\Gamma _{\theta \theta }^{\varphi
}=0,\hspace{0.5cm}\Gamma _{\varphi \varphi }^{\theta }=-\sin \theta \cos
\theta ,\hspace{0.5cm}\Gamma _{\varphi \theta }^{\varphi }=\cot \theta .
\end{equation*}

For every $x\in \mathbb{S}^{2},$ let $\nabla f_{\ell }(x)$ and $\nabla
^{2}f_{\ell }(x)$ be the vector-valued processes with elements 
\begin{equation*}
\nabla f_{\ell }(x)=(e_{1}^{x}f_{\ell }(x),e_{2}^{x}f_{\ell }(x)),\hspace{%
0.3cm}\nabla ^{2}f_{\ell }(x)=(e_{1}^{x}e_{1}^{x}f_{\ell
}(x),e_{1}^{x}e_{2}^{x}f_{\ell }(x),e_{2}^{x}e_{2}^{x}f_{\ell }(x)).
\end{equation*}%
Since the $f_{\ell }$ are eigenfunctions of the spherical Laplacian, the
value of $f_{\ell }$ at every fixed point $x\in \mathbb{S}^{2}$ is a linear
combination of its first and second order derivatives at $x$. If the point $%
x\in \mathbb{S}^{2}$ is also a critical point for $f_{\ell }$ it follows
that the value of the spherical harmonic at $x$ is a linear combination of
its second order derivatives, i.e., 
\begin{equation}
e_{1}^{x}e_{1}^{x}f_{\ell }(x)+e_{2}^{x}e_{2}^{x}f_{\ell }(x)=-\lambda
_{\ell }f_{\ell }(x).  \label{linear_dp}
\end{equation}%
Let us take $\mathbb{M}$ and $h$ in formula (\ref{morse_th}) to be $%
A_{u}(f_{\ell };\mathbb{S}^{2})$ and $\left. f_{\ell }\right\vert _{{%
A_{u}(f_{\ell };\mathbb{S}^{2})}}$ respectively; by the Morse
representation, we obtain 
\begin{equation}
\chi (A_{u}(f_{\ell };\mathbb{S}^{2}))=\sum_{j=0}^{2}(-1)^{j}\mu _{j},
\label{morse}
\end{equation}%
where 
\begin{equation*}
\mu _{j}=\#\{x\in \mathbb{S}^{2}:f_{\ell }(x)\geq u,\nabla f_{\ell }(x)=0,%
\text{Ind}(-\nabla ^{2}f_{\ell }(x))=j\}
\end{equation*}%
\begin{equation*}
=\#\{x\in \mathbb{S}^{2}:e_{1}^{x}e_{1}^{x}f_{\ell
}(x)+e_{2}^{x}e_{2}^{x}f_{\ell }(x)\leq -\lambda _{\ell }\,u,\nabla f_{\ell
}(x)=0,\text{Ind}(-\nabla ^{2}f_{\ell }(x))=j\},
\end{equation*}%
$\text{Ind}(M)$ denoting the number of negative eigenvalues of a square
matrix $M$. More specifically, $\mu _{0}$ is the number of maxima, $\mu _{1}$
the number of saddles, and $\mu _{2}$ the number of minima in the excursion
region $A_{u}(f_{\ell };\mathbb{S}^{2})$. In the next subsection, we show
how to justify this representation into a $L^{2}$ space, by means of an
approximating sequence of delta functions.\medskip

\subsection{The delta function approximation}

\noindent Let us now denote by $\Sigma _{\ell }(x,y)$ the covariance matrix
for the $10$-dimensional Gaussian random vector 
\begin{equation*}
(\nabla f_{\ell }(x),\nabla f_{\ell }(y),\nabla ^{2}f_{\ell }(x),\nabla
^{2}f_{\ell }(y))
\end{equation*}%
which combines the gradient and the elements of the Hessian evaluated at $x$%
, $y$; we shall write 
\begin{equation*}
\Sigma _{\ell }(x,y)=\left( 
\begin{array}{cc}
A_{\ell }(x,y) & B_{\ell }(x,y) \\ 
B_{\ell }^{t}(x,y) & C_{\ell }(x,y)%
\end{array}%
\right) ,
\end{equation*}%
where the $A_{\ell }$ and $C_{\ell }$ components collect the variances of
the gradient and Hessian terms respectively, while the matrix $B_{\ell }$
collects the covariances between first and second order derivatives. The
explicit computation of $\Sigma _{\ell }(x,y)$ requires iterative
derivations of Legendre polynomials and are given in \cite{CMW}, Appendix 1.
For the $L^{2}$ expansion of the Euler-Poincar\'{e} Characteristic to hold,
we need to assume the following, standard non-degeneracy condition :

\begin{condition}
\label{Thecondition} For every $(x,y)\in \mathbb{S}^{2}$, the Gaussian
vector $(\nabla f_{\ell }(x),\nabla f_{\ell }(y))$ has a non-degenerate
density function, i.e., the covariance matrix $A_{\ell }(x,y)$ is invertible.
\end{condition}

\medskip We can now build an approximating sequence of delta functions, and
establish their convergence both in the a.s. and in the $L^{2}$ sense. More
precisely, let $\delta _{\varepsilon }:\mathbb{R}^{2}\rightarrow \mathbb{R}$
be such that 
\begin{equation*}
\delta _{\varepsilon }(z)=(2\varepsilon )^{-2}\mathbb{I}_{[-\varepsilon
,\varepsilon ]^{2}}(z),
\end{equation*}%
and define the approximating sequence 
\begin{equation*}
\chi _{\varepsilon }(A_{u}(f_{\ell };\mathbb{S}^{2}))=\sum_{j=0}^{2}\mu
_{j}(\varepsilon ),
\end{equation*}%
where 
\begin{equation*}
\mu _{j}(\varepsilon )=\int_{\mathbb{S}^{2}}|\mathrm{det}(\nabla ^{2}f_{\ell
}(x))|\mathbb{I}_{\{\widetilde{f}_{\ell }(x)\geq u\}}\mathbb{I}_{\{\text{Ind}%
(-\nabla ^{2}f_{\ell }(x))=j\}}\delta _{\varepsilon }(\nabla f_{\ell }(x))dx,
\end{equation*}%
and we wrote for brevity%
\begin{equation*}
\widetilde{f}_{\ell }(x):=-\frac{e_{1}^{x}e_{1}^{x}f_{\ell
}(x)+e_{2}^{x}e_{2}^{x}f_{\ell }(x)}{\lambda _{\ell }};
\end{equation*}%
note that $\widetilde{f}_{\ell }(x)=f_{\ell }(x)$ when $x$ is a critical
point, i.e., as $\varepsilon \rightarrow 0$. Now recall the standard
identity (see i.e., \cite{adlerstflour}, Lemma 4.2.2) 
\begin{equation}
\sum_{j=0}^{2}(-1)^{j}\;|\det (\nabla ^{2}f_{\ell }(x))|\mathbb{I}_{\{\text{%
Ind}(-\nabla ^{2}f_{\ell }(x))=j\}}=\det (-\nabla ^{2}f_{\ell }(x))
\label{simp1}
\end{equation}%
\ so that we can rewrite $\chi _{\varepsilon }(A_{u}(f_{\ell };\mathbb{S}%
^{2}))$ as%
\begin{equation*}
\chi _{\varepsilon }(A_{u}(f_{\ell };\mathbb{S}^{2}))=\int_{\mathbb{S}^{2}}%
\mathrm{det}(\nabla ^{2}f_{\ell }(x))\mathbb{I}_{\{\widetilde{f}_{\ell
}(x)\geq u\}}\delta _{\varepsilon }(\nabla f_{\ell }(x))dx.
\end{equation*}%
As in \cite{estradeleon}, we are now able to prove the almost sure and $%
L^{2}(\Omega )$ convergence of $\chi _{\varepsilon }(A_{u}(f_{\ell };\mathbb{%
S}^{2}))$ to $\chi (A_{u}(f_{\ell };\mathbb{S}^{2})),$ as $\varepsilon
\rightarrow 0:$

\begin{lemma}
\label{XXe} For every $\ell $ such that Condition \ref{Thecondition} holds,
we have 
\begin{equation}
\chi (A_{u}(f_{\ell };\mathbb{S}^{2}))=\lim_{\varepsilon \rightarrow 0}\chi
_{\varepsilon }(A_{u}(f_{\ell };\mathbb{S}^{2})),  \label{Xe1}
\end{equation}%
where the convergence holds both $\omega -$a.s. and in $L^{2}(\Omega ).$
\end{lemma}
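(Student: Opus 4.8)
The plan is to prove (\ref{Xe1}) in two stages: first the $\omega$-almost sure convergence by a localization/change-of-variables argument built on Morse theory, and then the upgrade to $L^{2}(\Omega)$ convergence by dominating $\chi_{\varepsilon}$ with the approximate total count of critical points and establishing uniform integrability through a Kac--Rice second-moment computation.

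For the almost sure statement, I would first note that Condition \ref{Thecondition} makes $f_{\ell}$ almost surely a Morse function: a Bulinskaya-type argument shows that, with probability one, $f_{\ell}$ has only finitely many critical points $x_{1},\dots,x_{K}$ on $\mathbb{S}^{2}$, each with invertible Hessian $\nabla^{2}f_{\ell}(x_{i})$, and none lying exactly on the level set $\{f_{\ell}=u\}$. Since each Hessian is nondegenerate, the gradient map $x\mapsto\nabla f_{\ell}(x)$ is a local diffeomorphism near each $x_{i}$, so I may choose disjoint neighbourhoods $U_{i}\ni x_{i}$ on which it is a diffeomorphism onto a neighbourhood of $0\in\mathbb{R}^{2}$. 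For $\varepsilon$ small enough the condition $\nabla f_{\ell}(x)\in[-\varepsilon,\varepsilon]^{2}$ forces $x\in\bigcup_{i}U_{i}$, so the integral defining $\chi_{\varepsilon}(A_{u}(f_{\ell};\mathbb{S}^{2}))$ splits into finitely many pieces. On each $U_{i}$ I change variables $z=\nabla f_{\ell}(x)$, whose Jacobian relative to the surface measure equals $|\det\nabla_{E}^{2}f_{\ell}(x)|$ in the orthonormal frame $E$ and tends to $|\det\nabla^{2}f_{\ell}(x_{i})|$ as $x\to x_{i}$, because the differential of the gradient map agrees with the covariant Hessian at a critical point. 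Letting $\varepsilon\to0$ and using that $\delta_{\varepsilon}$ integrates to one and concentrates at $z=0$, together with $\widetilde{f}_{\ell}(x_{i})=f_{\ell}(x_{i})$ at critical points, each term converges to $\mathrm{sgn}(\det\nabla^{2}f_{\ell}(x_{i}))\,\mathbb{I}_{\{f_{\ell}(x_{i})\geq u\}}$. Summing over $i$ and invoking the pointwise identity (\ref{simp1}) and the Morse representation (\ref{morse}) returns exactly $\chi(A_{u}(f_{\ell};\mathbb{S}^{2}))$, which establishes (\ref{Xe1}) $\omega$-a.s.

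For the $L^{2}$ convergence I would dominate $|\chi_{\varepsilon}(A_{u}(f_{\ell};\mathbb{S}^{2}))|=|\sum_{j}(-1)^{j}\mu_{j}(\varepsilon)|\leq\sum_{j=0}^{2}\mu_{j}(\varepsilon)\leq N_{\varepsilon}$, where $N_{\varepsilon}:=\int_{\mathbb{S}^{2}}|\det\nabla^{2}f_{\ell}(x)|\,\delta_{\varepsilon}(\nabla f_{\ell}(x))\,dx$ is the approximate total number of critical points on the sphere. The same localization argument gives $N_{\varepsilon}\to N$ almost surely, $N$ being the (finite) number of critical points of $f_{\ell}$. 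It then suffices to show $\sup_{\varepsilon}\mathbb{E}[N_{\varepsilon}^{2}]<\infty$ and $\mathbb{E}[N_{\varepsilon}^{2}]\to\mathbb{E}[N^{2}]<\infty$: combined with the a.s.\ convergence these yield $L^{2}$-convergence of $N_{\varepsilon}$, hence uniform integrability of $\{N_{\varepsilon}^{2}\}$, hence of the dominated family $\{\chi_{\varepsilon}^{2}\}$, and therefore, with the a.s.\ limit already identified, convergence of $\chi_{\varepsilon}$ in $L^{2}(\Omega)$. The second moment is computed by a Kac--Rice representation,
\[
\mathbb{E}[N_{\varepsilon}^{2}]=\int_{\mathbb{S}^{2}}\int_{\mathbb{S}^{2}}\mathbb{E}\big[|\det\nabla^{2}f_{\ell}(x)|\,|\det\nabla^{2}f_{\ell}(y)|\,\delta_{\varepsilon}(\nabla f_{\ell}(x))\,\delta_{\varepsilon}(\nabla f_{\ell}(y))\big]\,dx\,dy.
\]

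The main obstacle lies precisely in passing to the limit inside this double integral near the diagonal $x=y$. For $x\neq y$ the integrand converges to the two-point critical-point density, and Condition \ref{Thecondition} — the invertibility of $A_{\ell}(x,y)$ — guarantees that the Gaussian vector $(\nabla f_{\ell}(x),\nabla f_{\ell}(y))$ has a density that is finite at $(0,0)$, so the limiting density is well defined away from the diagonal. The delicate point is to produce a dominating bound uniform in $\varepsilon$ that remains integrable as $y\to x$: one must check that the conditional expectation of $|\det\nabla^{2}f_{\ell}(x)|\,|\det\nabla^{2}f_{\ell}(y)|$ given $\nabla f_{\ell}(x)=\nabla f_{\ell}(y)=0$, weighted by the gradient density, does not blow up faster than integrably as the two points merge. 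This is where the smoothness and the explicit, nondegenerate covariance structure $\Sigma_{\ell}(x,y)$ of the spherical harmonics are used decisively; granting this integrability, dominated convergence gives $\mathbb{E}[N_{\varepsilon}^{2}]\to\mathbb{E}[N^{2}]<\infty$ and closes the argument.
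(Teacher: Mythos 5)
Your overall architecture matches the paper's: approximate $\chi$ by the delta-sequence, prove a.s.\ convergence by localizing around the (a.s.\ finitely many, nondegenerate) critical points, and upgrade to $L^{2}$ via a Kac--Rice second-moment computation. Two of your internal choices differ from the paper's, both legitimately. First, for the a.s.\ statement the paper does not redo the localization/change-of-variables argument: it simply invokes \cite{adlertaylor}, Theorem 11.2.3, with $f=\nabla f_{\ell}$, $g=(f_{\ell},-\nabla ^{2}f_{\ell})$ and $B=[u,\infty )\times \{\text{Ind}=j\}$, obtaining $\mu _{j}=\lim_{\varepsilon \rightarrow 0}\mu _{j}(\varepsilon )$ and then concluding via (\ref{morse}) and (\ref{simp1}); your hands-on proof is essentially a re-derivation of that theorem in this special case, and buys self-containedness at the cost of length (your observation that no critical value equals $u$ a.s.\ and that the differential of the gradient map coincides with the covariant Hessian at critical points are exactly the hypotheses that citation is doing the work of). Second, for the $L^{2}$ step the paper does not dominate by the critical-point count: it uses the elementary fact that a.s.\ convergence plus convergence of the $L^{2}$ norms implies $L^{2}$ convergence, so it only needs $\mathbb{E}[\chi _{\varepsilon }^{2}]\rightarrow \mathbb{E}[\chi ^{2}]$; your route through $N_{\varepsilon }$, Scheff\'{e} and uniform integrability is equivalent in substance and requires the same Kac--Rice input.

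The one place where your argument is not closed is precisely the point you flag and then ``grant'': the uniform-in-$\varepsilon $, integrable domination of the two-point Kac--Rice integrand near the diagonal $x=y$, equivalently the finiteness of $\mathbb{E}[N^{2}]$ and the legitimacy of passing the limit $\varepsilon \rightarrow 0$ inside the double integral. Condition \ref{Thecondition} gives invertibility of $A_{\ell }(x,y)$ for $x\neq y$ and hence pointwise convergence of the integrand off the diagonal, but it does not by itself control the blow-up of the density of $(\nabla f_{\ell }(x),\nabla f_{\ell }(y))$ at $(0,0)$ as $y\rightarrow x$; one must show that the conditional expectation of the product of determinants decays fast enough to compensate. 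The paper does not reprove this either: it imports it from \cite{CMW-EPC}, Proposition 1, which in turn rests on the two-point analysis of critical points in \cite{CMW}, \cite{CW}. Your proof therefore becomes complete once you either cite that result or carry out the near-diagonal expansion of $\Sigma _{\ell }(x,y)$ yourself; as written, the crux of the $L^{2}$ part is assumed rather than proven.
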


\begin{proof}
To prove almost sure convergence, we first apply \cite{adlertaylor}, Theorem
11.2.3, where we take $f=\nabla f_{\ell }:\mathbb{S}^{2}\rightarrow \mathbb{R%
}^{2}$, $g=(f_{\ell },-\nabla ^{2}f_{\ell }):\mathbb{S}^{2}\rightarrow 
\mathbb{R}^{4}$, $u=0$ and $B=B_{j}=[u,\infty )\times \{\text{Ind}=j\}$, so
that, for $j=0,1,2$, we have 
\begin{equation}
\mu _{j}=\lim_{\varepsilon \rightarrow 0}\mu _{j}(\varepsilon ),\hspace{1cm}%
\omega -{\text{a}}\text{{.s.}}  \label{mj1}
\end{equation}%
We note that the conditions in \cite{adlertaylor}, Theorem 11.2.3, are all
fulfilled since random spherical harmonics are Morse functions with
probability one, under Condition \ref{Thecondition}; then the almost sure
convergence (\ref{Xe1}) immediately follows from (\ref{morse}), (\ref{mj1})
and (\ref{simp1}). We prove now that (\ref{Xe1}) also holds in $L^{2}(\Omega
)$; it is a classical result that $L^{2}$-convergence follows from
convergence a.s. and convergence of the $L^{2}$ norm, whence the proof will
be completed if we show that 
\begin{equation}
\lim_{\varepsilon \rightarrow 0}\mathbb{E}[\chi _{\varepsilon
}(A_{u}(f_{\ell };\mathbb{S}^{2}))]^{2}=\mathbb{E}[\chi (A_{u}(f_{\ell };%
\mathbb{S}^{2}))]^{2}.  \label{L2}
\end{equation}%
Indeed, note that 
\begin{equation*}
\mathbb{E}[\chi _{\varepsilon }(A_{u}(f_{\ell };\mathbb{S}%
^{2}))]^{2}=\sum_{j,k=0}^{2}(-1)^{j+k}\mathbb{E}[\mu _{j}(\varepsilon )\mu
_{k}(\varepsilon )].
\end{equation*}%
Under Condition \ref{Thecondition} we can apply Kac-Rice formula to compute $%
\mathbb{E}[\mu _{j}(\varepsilon )\mu _{k}(\varepsilon )]$ (see \cite%
{azaiswschebor}, Theorem 6.3 or \cite{adlertaylor}, Theorem 11.2.1) and,
proceeding as in the proof of \cite{CMW-EPC}, Proposition 1, we obtain 
\begin{equation*}
\sum_{j,k=0}^{2}(-1)^{j+k}\mathbb{E}[\mu _{j}(\varepsilon )\mu
_{k}(\varepsilon )]=\int_{\mathbb{S}^{2}}\int_{\mathbb{S}^{2}}\int_{u}^{%
\infty }\int_{u}^{\infty }J_{2,\ell ,\varepsilon
}(x,y,t_{1},t_{2})dt_{1}dt_{2}dxdy,
\end{equation*}%
where%
\begin{equation*}
J_{2,\ell ,\varepsilon }(x,y,t_{1},t_{2})
\end{equation*}%
\begin{equation*}
=\frac{1}{(2\varepsilon )^{4}}\iint_{[-\varepsilon ,\varepsilon ]^{2}\times
\lbrack -\varepsilon ,\varepsilon ]^{2}}\left\{ \varphi _{(\tilde{f}_{\ell
}(x),\tilde{f}_{\ell }(y),\nabla f_{\ell }(x),\nabla f_{\ell
}(y))}(t_{1},t_{2},\eta _{1},\eta _{2})\right.
\end{equation*}%
\begin{equation*}
\left. \times \mathbb{E}[\text{det}(-\nabla ^{2}f_{\ell }(x))\text{det}%
(-\nabla ^{2}f_{\ell }(y))\big|\nabla f_{\ell }(x)=\eta _{1},\nabla f_{\ell
}(y)=\eta _{2},\tilde{f}_{\ell }(x)=t_{1},\tilde{f}_{\ell
}(y)=t_{2}]\right\} d\eta _{1}d\eta _{2}\text{ },
\end{equation*}%
and $\varphi _{(\tilde{f}_{\ell }(x),\tilde{f}_{\ell }(y),\nabla f_{\ell
}(x),\nabla f_{\ell }(y))}$ is the density of the $6$-dimensional vector 
\begin{equation*}
(\tilde{f}_{\ell }(x),\tilde{f}_{\ell }(y),\nabla f_{\ell }(x),\nabla
f_{\ell }(y)).
\end{equation*}%
We note also that, under Condition \ref{Thecondition}, the covariance matrix 
$A_{\ell }(x,y)$ and the conditional covariance matrix of the Gaussian
vector 
\begin{equation*}
(\nabla ^{2}f_{\ell }(x),\nabla ^{2}f_{\ell }(y)\big|\nabla f_{\ell
}(x),\nabla f_{\ell }(y),\tilde{f}_{\ell }(x),\tilde{f}_{\ell }(y))
\end{equation*}%
%
%
%
%
%
%
are invertible for $x,y\in \mathbb{S}^{2}$; hence the conditional Gaussian
density function is continuous and thus, as $\varepsilon \rightarrow 0$, the
integral $J_{2,\ell ,\varepsilon }(x,y,t_{1},t_{2})$ converges to%
\begin{equation*}
J_{2,\ell }(x,y,t_{1},t_{2})=\varphi _{(f_{\ell }(x),f_{\ell }(y),\nabla
f_{\ell }(x),\nabla f_{\ell }(y))}(t_{1},t_{2},0,0)
\end{equation*}%
\begin{equation*}
\times \mathbb{E}[\text{det}(-\nabla ^{2}f_{\ell }(x))\text{det}(-\nabla
^{2}f_{\ell }(y))\big|f_{\ell }(x)=t_{1},f_{\ell }(y)=t_{2},\nabla f_{\ell
}(x)=0,\nabla f_{\ell }(y)=0].
\end{equation*}%
The statement follows by observing that under Condition \ref{Thecondition},
and in view of \cite{CMW-EPC}, Proposition 1, we also have 
\begin{equation*}
\mathbb{E}[\chi (A_{u}(f_{\ell };\mathbb{S}^{2}))]^{2}=\int_{\mathbb{S}%
^{2}}\int_{\mathbb{S}^{2}}\int_{u}^{\infty }\int_{u}^{\infty }J_{2,\ell
}(x,y,t_{1},t_{2})dt_{1}dt_{2}dxdy.
\end{equation*}
\end{proof}

\subsection{Wiener Chaos}

%

In this section we recall very briefly some basic facts on Wiener-It\^{o}
chaotic expansion for non-linear functionals of Gaussian fields. We follow
closely the summary which was given in \cite{MPRW2015}, while we refer to 
\cite{noupebook} for an exhaustive discussion.

Recall first that each random eigenfunction $f_{\ell }$ in (\ref{fell}) is a
by-product of the family of complex-valued, independent, Gaussian random
variables $\{a_{\ell m}\}$, $m=-\ell ,\dots ,\ell $, defined on some
probability space $(\Omega ,{\mathcal{F}},\mathbb{P})$ and satisfying the
following properties: i) for $m\neq 0$ every $a_{\ell m}$ has the form 
\begin{equation*}
\mathrm{Re}(a_{\ell m})+i\,\mathrm{Im}(a_{\ell m}),
\end{equation*}%
where $\mathrm{Re}(a_{\ell m})$ and $\mathrm{Im}(a_{\ell m})$ are two
zero-mean, independent Gaussian variables with variance ${1}/{2}$; ii) $%
a_{\ell 0}$ follows a standard Gaussian distribution; iii) $a_{\ell ,m}$ and 
$a_{\ell ,m^{\prime }}$ are stochastically independent whenever $m^{\prime
}\neq -m$; iv) $(-1)^{m}a_{\ell ,-m}=\bar{a}_{\ell m}$. We define the space $%
\mathbf{A}$ to be the closure in $L^{2}(\mathbb{P})$ of all real finite
linear combinations of random variables of the forms 
\begin{equation*}
z(-1)^{m}a_{\ell ,-m}+\bar{z}\,a_{\ell m}\hspace{0.5cm}\text{and}\hspace{%
0.5cm}a_{\ell 0},
\end{equation*}%
$z\in \mathbb{C}$; the space $A$ is a real, centred, Gaussian Hilbert
subspace of $L^{2}(\mathbb{P})$. For each $q\geq 0$ the $q$-th \textit{%
Wiener chaos} ${\mathcal{H}_{q}}$ associated with $\mathbf{A}$ is the closed
linear subspace of $L^{2}(\mathbb{P})$ generated by all real, finite, linear
combinations of random variables of the form 
\begin{equation*}
H_{q_{1}}(x_{1})\cdot H_{q_{2}}(x_{2})\cdots H_{q_{k}}(x_{k})
\end{equation*}%
for $k\geq 1$, where the integers $q_{1},q_{2},\dots ,q_{k}\geq 0$ satisfy $%
q_{1}+q_{2}+\cdots +q_{k}=q$, and $(x_{1},x_{2},\dots ,x_{k})$ is a
standard, real, Gaussian vector extracted form $\mathbf{A}$; note that in
particular ${\mathcal{H}_{0}}=\mathbb{R}$. As well-known Wiener chaoses $%
\left\{ {\mathcal{H}_{q},}\text{ }q=0,1,2,...\right\} $ are orthogonal,
i.e., ${\mathcal{H}_{q}}\bot {\mathcal{H}_{p}}$ for $p\neq q$; moreover, the
following \textit{Wiener-It{\^{o}}} decomposition of $L^{2}(\mathbb{P})$
holds: every random variable $F\in L^{2}(\mathbb{P})$ admits a unique
expansion of the type 
\begin{equation}
F=\mathbb{E}[F]+\sum_{q=1}^{\infty }\mathtt{Proj}[F|q]  \label{chaos}
\end{equation}
where the projections $\mathtt{Proj}[F|q]\in \mathcal{H}_{q}$ for every $%
q=1,2,...$and the series converges in $L^{2}(\mathbb{P})$. Again we refer 
\cite{noupebook}, Theorem 2.2.4, for an extremely rich discussion and a vast
gallery of examples and applications.

\subsection{Overview of the Proof}

The main technical tools for our argument are collected in Proposition \ref%
{hk} and Proposition \ref{AB}; the proof of each of these results takes a
separate Section in the Appendix. In particular, in Proposition \ref{hk} we
derive explicit analytic expression for the projection coefficients on the
components of second order Wiener chaos; in Proposition \ref{AB}, we manage
to write down the integrals over the sphere of these components in terms of
weighted sums of the random spherical harmonic coefficients $\left\{ a_{\ell
m}\right\} $: the latter results requires a very careful analytic
investigation on derivatives of Associated Legendre Function, which is given
in the third Section of the Appendix. Combining together Proposition \ref{hk}
and Proposition \ref{AB}, one obtains an explicit formula for the
second-order Wiener chaos, which can be further simplified by some algebraic
manipulations to achieve the statement of Theorem \ref{proj2}. Because the
spherical harmonic coefficients are independent and identically distributed
(excluding the term at $m=0$), the conclusions of Theorem \ref{clt} are then
rather straightforward to obtain.

\section{The Projection into the second Wiener Chaos}

\label{Wiener2}

In this section we prove Theorem \ref{proj2}, i.e., we derive an analytic
expression for the projection of the Euler-Poincar\'{e} Characteristic on
the second-order Wiener chaos. Our strategy for this proof can be summarized
as follows: from standard results in Morse theory detailed in the previous
Section, we can express the Euler-Poincar\'{e} Characteristic as a function
of a six-dimensional vector, involving the eigenfunctions $f_{\ell },$ the
two-dimensional gradient vector, and the three-dimensional vector including
the independent components of the Hessian. Actually, as in \cite{CMW} these
components may immediately be reduced to five, as the eigenfunctions can be
written as linear combinations of first and second order derivatives. It is
then convenient to implement a linear transform on this vector, to make its
components independent when evaluated on the same point $x\in \mathbb{S}^{2}$%
; this idea is analogous to the approach which was pursue by \cite%
{estradeleon} in their recent work on the Euler-Poincar\'{e} Characteristic
for Gaussian field on an Euclidean (growing) domain. We are then able to
write down explicitly the projection coefficients on the second-order Wiener
chaos; the result then follows from a very careful cancellation of the
different projection components.

\subsection{Cholesky decomposition}

In view of (\ref{linear_dp}) it follows that we can rewrite $\chi
_{\varepsilon }(A_{u}(f_{\ell };\mathbb{S}^{2}))$ as 
\begin{align*}
\chi _{\varepsilon }(A_{u}(f_{\ell };\mathbb{S}^{2}))& =\int_{\mathbb{S}%
^{2}}[e_{1}^{x}e_{1}^{x}f_{\ell }(x)\;e_{2}^{x}e_{2}^{x}f_{\ell
}(x)-(e_{1}^{x}e_{2}^{x}f_{\ell }(x))^{2}] \\
& \;\;\times \mathbb{I}_{\{e_{1}^{x}e_{1}^{x}f_{\ell
}(x)+e_{2}^{x}e_{2}^{x}f_{\ell }(x)\leq -\lambda _{\ell }u\}}\delta
_{\varepsilon }(e_{1}^{x}f_{\ell }(x),e_{2}^{x}f_{\ell }(x))dx.
\end{align*}%
It should be noted that the integrand 
\begin{equation*}
\lbrack e_{1}^{x}e_{1}^{x}f_{\ell }(x)\;e_{2}^{x}e_{2}^{x}f_{\ell
}(x)-(e_{1}^{x}e_{2}^{x}f_{\ell }(x))^{2}]\mathbb{I}_{%
\{e_{1}^{x}e_{1}^{x}f_{\ell }(x)+e_{2}^{x}e_{2}^{x}f_{\ell }(x)\leq -\lambda
_{\ell }u\}}\delta _{\varepsilon }(e_{1}^{x}f_{\ell }(x),e_{2}^{x}f_{\ell
}(x))
\end{equation*}%
is isotropic, so focussing on the great circle $\theta _{x}=\frac{\pi }{2}$
is simply a convenient simplification. Let us now write $\sigma _{\ell }(x)$
for the $5\times 5$ covariance matrix of the Gaussian random vector 
\begin{equation*}
(e_{1}^{x}f_{\ell }(x),e_{2}^{x}f_{\ell }(x),e_{1}^{x}e_{1}^{x}f_{\ell
}(x),e_{1}^{x}e_{2}^{x}f_{\ell }(x),e_{2}^{x}e_{2}^{x}f_{\ell }(x)),
\end{equation*}%
i.e. the $5\times 1$ vector that includes the gradient and the Hessian
components of interest. We evaluate the covariance matrix $\sigma _{\ell
}(x) $ on the great circle such that $\theta _{x}=\frac{\pi }{2}$, and we
write it in the partitioned form 
\begin{equation*}
\sigma _{\ell }(x)_{5\times 5}=\left( 
\begin{array}{cc}
a_{\ell }(x) & b_{\ell }(x) \\ 
b_{\ell }^{t}(x) & c_{\ell }(x)%
\end{array}%
\right) ,
\end{equation*}%
where the superscript $t$ denotes transposition, and (see \cite{CMW},
Section 2.2) 
\begin{equation*}
a_{\ell }(\pi /2,\varphi )=\left( 
\begin{array}{cc}
\frac{\lambda _{\ell }}{2} & 0 \\ 
0 & \frac{\lambda _{\ell }}{2}%
\end{array}%
\right) ,\hspace{1cm}b_{\ell }(\pi /2,\varphi )=\left( 
\begin{array}{ccc}
0 & 0 & 0 \\ 
0 & 0 & 0%
\end{array}%
\right) ,
\end{equation*}%
\begin{equation*}
c_{\ell }(\pi /2,\varphi )=\frac{\lambda _{\ell }^{2}}{8}\left( 
\begin{array}{ccc}
3-\frac{2}{\lambda _{\ell }} & 0 & 1+\frac{2}{\lambda _{\ell }} \\ 
0 & 1-\frac{2}{\lambda _{\ell }} & 0 \\ 
1+\frac{2}{\lambda _{\ell }} & 0 & 3-\frac{2}{\lambda _{\ell }}%
\end{array}%
\right) .
\end{equation*}%
\medskip

\noindent Let us first recall that the \emph{Cholesky decomposition} of a
Hermitian positive-definite matrix $A$ takes the form $A=\Lambda \Lambda
^{t},$ where $\Lambda $ is a lower triangular matrix with real and positive
diagonal entries, and $\Lambda ^{t}$ denotes the conjugate transpose of $%
\Lambda $. It is well-known that every Hermitian positive-definite matrix
(and thus also every real-valued symmetric positive-definite matrix) admits
a unique Cholesky decomposition.

By an explicit computation, it is then possible to show that the Cholesky
decomposition of $\sigma _{\ell }$ takes the form $\sigma _{\ell }=\Lambda
_{\ell }\Lambda _{\ell }^{t},$ where 
\begin{equation*}
\Lambda _{\ell }=\left( 
\begin{array}{ccccc}
\frac{\sqrt{\lambda }_{\ell }}{\sqrt{2}} & 0 & 0 & 0 & 0 \\ 
0 & \frac{\sqrt{\lambda }_{\ell }}{\sqrt{2}} & 0 & 0 & 0 \\ 
0 & 0 & \frac{\sqrt{\lambda _{\ell }}\sqrt{3\lambda _{\ell }-2}}{2\sqrt{2}}
& 0 & 0 \\ 
0 & 0 & 0 & \frac{\sqrt{\lambda _{\ell }}\sqrt{\lambda _{\ell }-2}}{2\sqrt{2}%
} & 0 \\ 
0 & 0 & \frac{\sqrt{\lambda _{\ell }}(\lambda _{\ell }+2)}{2\sqrt{2}\sqrt{%
3\lambda _{\ell }-2}} & 0 & \frac{\lambda _{\ell }\sqrt{{\lambda _{\ell }-2}}%
}{\sqrt{3\lambda _{\ell }-2}}%
\end{array}%
\right)
\end{equation*}%
\begin{equation*}
=:\left( 
\begin{array}{ccccc}
\lambda _{1} & 0 & 0 & 0 & 0 \\ 
0 & \lambda _{1} & 0 & 0 & 0 \\ 
0 & 0 & \lambda _{3} & 0 & 0 \\ 
0 & 0 & 0 & \lambda _{4} & 0 \\ 
0 & 0 & \lambda _{2} & 0 & \lambda _{5}%
\end{array}%
\right) ;
\end{equation*}%
in the last expression, for notational simplicity we have omitted the
dependence of the $\lambda _{i}$s on $\ell $. The matrix is block diagonal,
because under isotropy the gradient components are independent from the
Hessian when evaluated at the same point (see i.e., \cite{adlertaylor},
Section 5.5). We can hence define a $5$-dimensional standard Gaussian vector 
$Y(x)=(Y_{1}(x),Y_{2}(x),Y_{3}(x),Y_{4}(x),Y_{5}(x))$ with independent
components such that 
\begin{equation*}
(e_{1}^{x}f_{\ell }(x),e_{2}^{x}f_{\ell }(x),e_{1}^{x}e_{1}^{x}f_{\ell
}(x),e_{1}^{x}e_{2}^{x}f_{\ell }(x),e_{2}^{x}e_{2}^{x}f_{\ell }(x))
\end{equation*}%
\begin{equation*}
=\Lambda _{\ell }Y(x)=\left( \lambda _{1}Y_{1}(x),\lambda
_{1}Y_{2}(x),\lambda _{3}Y_{3}(x),\lambda _{4}Y_{4}(x),\lambda
_{5}Y_{5}(x)+\lambda _{2}Y_{3}(x)\right) .
\end{equation*}%
The expression that we need to expand can then be written as 
\begin{equation*}
\lbrack e_{1}^{x}e_{1}^{x}f_{\ell }(x)\;e_{2}^{x}e_{2}^{x}f_{\ell
}(x)-(e_{1}^{x}e_{2}^{x}f_{\ell }(x))^{2}]\;\mathbb{I}_{%
\{e_{1}^{x}e_{1}^{x}f_{\ell }(x)+e_{2}^{x}e_{2}^{x}f_{\ell }(x)\leq -\lambda
_{\ell }u\}}\;\delta _{\varepsilon }(e_{1}^{x}f_{\ell
}(x),\,e_{2}^{x}f_{\ell }(x))
\end{equation*}%
\begin{equation*}
=[\lambda _{3}Y_{3}(x)\{\lambda _{5}Y_{5}(x)+\lambda
_{2}Y_{3}(x)\}-\{\lambda _{4}Y_{4}(x)\}^{2}]\;\mathbb{I}_{\left\{ \frac{%
\lambda _{3}}{\lambda }Y_{3}(x)+\frac{\lambda _{5}}{\lambda }Y_{5}(x)+\frac{%
\lambda _{2}}{\lambda }Y_{3}(x)\leq -u\right\} }\;\delta _{\varepsilon
}(\lambda _{1}Y_{1}(x),\,\lambda _{1}Y_{2}(x)).
\end{equation*}

\subsection{Second order chaotic component}

We need now to start computing the projection coefficients on second-order
Wiener chaoses. Our notation is as follows; we write \noindent $h_{ij}$, $%
i,j=1,\dots 5$, $i\neq j$, for the projections on terms of the form $%
H_{1}(Y_{i})H_{1}(Y_{j})=Y_{i}Y_{j},$ i.e., we define 
\begin{equation*}
h_{ij}(u;\ell )=\lim_{\varepsilon \to 0} \mathbb{E}\left[[ \lambda _{3}Y_{3}
\{\lambda _{5}Y_{5}+\lambda _{2}Y_{3}\}-(\lambda _{4}Y_{4})^{2}]\;1\hspace{%
-0.27em}\mbox{\rm l}_{\left\{ \frac{\lambda _{2}+\lambda _{3}}{\lambda }%
Y_{3}+\frac{\lambda _{5}}{\lambda }Y_{5}\leq -u\right\}
}\;\delta_{\varepsilon} (\lambda _{1}Y_{1}, \lambda _{1}Y_{2})Y_{i}Y_{j}%
\right];
\end{equation*}%
on the other hand, we write $k_{i}$, $i=1,\dots 5$, for the projection on
terms of the form $H_{2}(Y_{i})$, i.e., we define 
\begin{equation*}
k_{i}(u;\ell )=\lim_{\varepsilon \to 0} \mathbb{E}\left[ [\lambda _{3}Y_{3}
\{\lambda _{5}Y_{5}+\lambda _{2}Y_{3}\}-(\lambda _{4}Y_{4})^{2}]\;1\hspace{%
-0.27em}\mbox{\rm l}_{\left\{ \frac{\lambda _{2}+\lambda _{3}}{\lambda }%
Y_{3}+\frac{\lambda _{5}}{\lambda }Y_{5}\leq -u\right\}
}\;\delta_{\varepsilon} (\lambda _{1}Y_{1}, \lambda _{1}Y_{2})H_{2}(Y_{i})%
\right].
\end{equation*}

\noindent The second order chaotic component of the Euler-Poincar\'{e}
Characteristic is then given by 
\begin{equation*}
\mathtt{Proj}[\chi (A_{u}(f_{\ell };\mathbb{S}^{2}))|2]=\sum_{i=1}^{5}%
\sum_{j=1}^{i}h_{ij}(u;\ell )\int_{\mathbb{S}^{2}}Y_{i}(x)Y_{j}(x)dx+\frac{1%
}{2}\sum_{i=1}^{5}k_{i}(u;\ell )\int_{\mathbb{S}^{2}}H_{2}(Y_{i}(x))dx.
\end{equation*}%
The following Proposition provides analytic expressions for the coefficients 
$h_{ij}$ and $k_{i}$:

\begin{proposition}
\label{hk} a) All coefficients $h_{ij}(u;\ell )$ are identically zero,
unless ($i,j)=(3,5),$ i.e. 
\begin{equation*}
h_{ij}(u;\ell )=\sqrt{\lambda _{\ell }}\sqrt{\lambda _{\ell }-2}\frac{\Phi
(-u)(3\lambda _{\ell }-2)+u\phi (u)[2+\lambda _{\ell }(u^{2}+1)]}{2\sqrt{2}%
\pi (3\lambda _{\ell }-2)}\delta _{i}^{3}\delta _{j}^{5};
\end{equation*}
b) For the $k_{i}$ coefficients we have%
\begin{equation*}
k_{1}(u;\ell )=k_{2}(u;\ell )=-\frac{2\Phi (-u)+\lambda _{\ell }u\phi (u)}{%
4\pi },
\end{equation*}%
\begin{equation*}
k_{3}(u;\ell )=\Phi (-u)\frac{\lambda _{\ell }+2}{4\pi }+\lambda_{\ell} 
\frac{2+\lambda _{\ell }(u^{2}+1)}{2\pi (3\lambda _{\ell }-2)}u\phi (u),
\end{equation*}
\begin{equation*}
k_{4}(u;\ell )=-\Phi (-u)\frac{\lambda _{\ell }-2}{4\pi },\text{ }%
k_{5}(u;\ell )=(\lambda _{\ell }-2)\frac{\lambda _{\ell }(u^{2}+1)+2}{4\pi
(3\lambda _{\ell }-2)}u\phi (u).
\end{equation*}
\end{proposition}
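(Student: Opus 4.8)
The plan is to compute each of the projection coefficients $h_{ij}(u;\ell)$ and $k_i(u;\ell)$ directly from their defining expressions as limits of Gaussian expectations. The key structural observation is that both $h_{ij}$ and $k_i$ are expectations of a product of three factors: the Hessian-determinant term $[\lambda_3 Y_3\{\lambda_5 Y_5 + \lambda_2 Y_3\} - (\lambda_4 Y_4)^2]$, an indicator depending only on $(Y_3,Y_5)$, and the delta-function factor $\delta_\varepsilon(\lambda_1 Y_1, \lambda_1 Y_2)$, multiplied finally by either $Y_iY_j$ or $H_2(Y_i)$. Because $Y=(Y_1,\dots,Y_5)$ is a standard Gaussian vector with \emph{independent} components (this is exactly what the Cholesky decomposition bought us), the expectation factorizes over the independent coordinates. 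I would first take the limit $\varepsilon\to 0$ in the delta-function factor: since $\delta_\varepsilon(\lambda_1 Y_1,\lambda_1 Y_2)$ concentrates at $Y_1=Y_2=0$, in the limit it contributes the density of $(\lambda_1 Y_1,\lambda_1 Y_2)$ at the origin, namely $\frac{1}{2\pi\lambda_1^2}=\frac{1}{2\pi}\cdot\frac{2}{\lambda_\ell}=\frac{1}{\pi\lambda_\ell}$, together with a conditioning on $Y_1=Y_2=0$.

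\medskip\noindent\textbf{Exploiting independence to kill most coefficients.} The main simplification is that the integrand before multiplication depends only on $(Y_3,Y_4,Y_5)$ (after the delta factor has been resolved at $Y_1=Y_2=0$), while $Y_1,Y_2$ enter only through the delta term. For part (a), I would argue as follows. If $i$ or $j$ lies in $\{1,2\}$, the factor $Y_iY_j$ introduces an independent standard Gaussian whose odd moment forces the expectation to vanish (e.g. $\mathbb{E}[Y_1]=0$), unless it is balanced; since the remaining integrand is even in $Y_1,Y_2$, any solitary factor of $Y_1$ or $Y_2$ gives zero. For the pairs $(i,j)$ drawn from $\{3,4,5\}$, I would compute the Gaussian expectations explicitly. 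The integrand is linear in $Y_5$ (through $\lambda_3\lambda_5 Y_3 Y_5$), quadratic in $Y_3$, and quadratic in $Y_4$; the factor $Y_4$ appears only as $Y_4^2$, which is even, so any coefficient $h_{ij}$ containing a single $Y_4$ (i.e. $j=4$ or $i=4$ with the partner $\neq 4$) vanishes by oddness in $Y_4$. This isolates the genuinely nonzero cases to $(i,j)\in\{(3,3),(5,5),(3,5),(4,4)\}$, and one then checks which of these survive the precise cancellation claimed; the statement asserts only $(3,5)$ survives, so I must verify that the $(3,3)$, $(5,5)$, $(4,4)$ contributions to the \emph{$h$-type} (i.e. $H_1H_1$) projections either vanish or are absorbed into the $k_i$ (i.e. $H_2$) coefficients.

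\medskip\noindent\textbf{Computing the surviving integrals.} For the nonzero coefficients, I would carry out the residual one- and two-dimensional Gaussian integrals over $(Y_3,Y_5)$ subject to the indicator $\{\frac{\lambda_2+\lambda_3}{\lambda}Y_3 + \frac{\lambda_5}{\lambda}Y_5 \le -u\}$. The natural tool is to note that $\frac{\lambda_2+\lambda_3}{\lambda}Y_3+\frac{\lambda_5}{\lambda}Y_5$ is itself (after rescaling) a standard Gaussian, since $\lambda_2,\lambda_3,\lambda_5$ arise from the Cholesky factor of the Hessian block and the combination corresponds exactly to $\widetilde f_\ell$, which has unit variance. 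I would therefore change variables to write the constraint as $\{Z\le -u\}$ for a standard Gaussian $Z$, express $Y_3$ and $Y_5$ in terms of $Z$ and an independent Gaussian, and reduce each coefficient to integrals of the form $\int_{-\infty}^{-u}\text{(polynomial)}\,\phi(z)\,dz$. These evaluate in closed form to combinations of $\Phi(-u)$, $\phi(u)$, $u\phi(u)$, and $u^2\phi(u)$, using the recursions $\int z\phi\,dz=-\phi$ and $\int z^2\phi\,dz = \Phi - z\phi$. Substituting back the explicit values $\lambda_1=\sqrt{\lambda_\ell/2}$, $\lambda_3=\frac{\sqrt{\lambda_\ell}\sqrt{3\lambda_\ell-2}}{2\sqrt2}$, $\lambda_4=\frac{\sqrt{\lambda_\ell}\sqrt{\lambda_\ell-2}}{2\sqrt2}$, $\lambda_5=\frac{\lambda_\ell\sqrt{\lambda_\ell-2}}{\sqrt{3\lambda_\ell-2}}$, $\lambda_2=\frac{\sqrt{\lambda_\ell}(\lambda_\ell+2)}{2\sqrt2\sqrt{3\lambda_\ell-2}}$ then yields the stated formulae, including the prefactor $\frac{1}{\pi\lambda_\ell}$ from the delta term, which combines with the $\lambda_i$ to produce the claimed dependence on $\lambda_\ell$.

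\medskip\noindent\textbf{The main obstacle.} I expect the hardest part to be the bookkeeping in the conditional-variance structure rather than any single integral: because the Cholesky factor is only block-diagonal (the $(5,3)$ entry $\lambda_2$ couples $Y_3$ into the $e_2^x e_2^x f_\ell$ component), the indicator and the determinant term share the variable $Y_3$, so the integrals over $(Y_3,Y_5)$ do not fully decouple and must be handled jointly. Keeping track of the precise algebraic combinations of $\lambda_2,\lambda_3,\lambda_5$ and verifying that they collapse into the clean coefficients stated — in particular the recurring factor $\frac{2+\lambda_\ell(u^2+1)}{3\lambda_\ell-2}$ appearing in $h_{35}$, $k_3$, and $k_5$ — is where the delicate cancellations occur, and this is the step that demands the most care.
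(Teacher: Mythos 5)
Your proposal is correct and follows essentially the same route as the paper's proof in Appendix A: independence of the $Y_i$ from the Cholesky factorization, the delta factor resolving in the limit to the density $\tfrac{1}{2\pi\lambda_1^2}$ at the origin (with the $H_1(Y_1)$, $H_1(Y_2)$ projections vanishing by symmetry, the paper's $\varphi_1=0$), parity in $Y_4$ killing $h_{34}$ and $h_{45}$, and closed-form truncated Gaussian moments for the survivors. The only (cosmetic) difference is that you rotate $(Y_3,Y_5)$ so the indicator depends on a single standard Gaussian, whereas the paper integrates out $Y_5$ to produce factors $\Phi\left(\frac{-u-\alpha_\ell y}{\beta_\ell}\right)$ and then integrates by parts (Lemmas \ref{aa1}--\ref{aa3}); the two reductions are equivalent.
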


The proof of Proposition \ref{hk} is postponed to the Appendix \ref{proofhk}%
. From Proposition \ref{hk} it is then immediate to obtain the following
expression: 
\begin{equation*}
\mathtt{Proj}[\chi (A_{u}(f_{\ell };\mathbb{S}^{2}))|2]=h_{35}(u;\ell
)A_{35}(\ell )+\frac{1}{2}\sum_{i=1}^{5}k_{i}(u;\ell )B_{i}(\ell ).
\end{equation*}%
where 
\begin{equation*}
A_{ij}(\ell )=\int_{\mathbb{S}^{2}}Y_{i}(x)Y_{j}(x)dx,\hspace{1cm}B_{i}(\ell
)=\int_{\mathbb{S}^{2}}H_{2}(Y_{i}(x))dx.
\end{equation*}%
Our next step is then to investigate the behaviour of these integrals of
stochastic processes; this task is accomplished in the following Lemma.

\begin{proposition}
\label{AB} We have that 
\begin{equation*}
A_{35}(\ell )=4\pi \frac{\sqrt{2}}{3}\sum_{m=-\ell }^{\ell }\{|a_{\ell
m}|^{2}-1\}\left[ -\frac{1}{\ell }+\frac{3\,m}{\ell ^{2}}-\frac{2\,m^{3}}{%
\ell ^{4}}\right] +r_{0}(\ell ),
\end{equation*}%
and moreover%
\begin{equation*}
B_{1}(\ell )=4\pi \,\sum_{m=-\ell }^{\ell }\{|a_{\ell m}|^{2}-1\}\left[ 
\frac{1}{\ell }-\frac{m}{\ell ^{2}}\right] +r_{1}(\ell ),
\end{equation*}%
\begin{equation*}
B_{2}(\ell )=4\pi \,\sum_{m=-\ell }^{\ell }\{|a_{\ell m}|^{2}-1\}\frac{m}{%
\ell ^{2}}+r_{2}(\ell ),
\end{equation*}%
\begin{equation*}
B_{3}(\ell )=4\pi \sum_{m=-\ell }^{\ell }\{|a_{\ell m}|^{2}-1\}\left[ \frac{4%
}{3\ell }-\frac{2m}{\ell ^{2}}+\frac{2m^{3}}{3\ell ^{4}}\right] +r_{3}(\ell
),
\end{equation*}%
\begin{equation*}
B_{4}(\ell )=4\pi \times 2\sum_{m=-\ell }^{\ell }\{|a_{\ell m}|^{2}-1\}\left[
\frac{m}{\ell ^{2}}-\frac{m^{3}}{\ell ^{4}}\right] +r_{4}(\ell ),
\end{equation*}%
\begin{equation*}
B_{5}(\ell )=4\pi \times \frac{1}{6}\sum_{m=-\ell }^{\ell }\{|a_{\ell
m}|^{2}-1\}\left[ \frac{1}{\ell }+\frac{8m^{3}}{\ell ^{4}}\right]
+r_{5}(\ell ),
\end{equation*}%
where $\sqrt{\mathbb{E}\left[ r_{i}(\ell )\right] ^{2}}=O(\ell ^{-1}),$ for
all $i=0,...,5.$
\end{proposition}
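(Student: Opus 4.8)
The plan is to turn each of the integrals $A_{35}(\ell)$ and $B_{i}(\ell)$ into an explicit quadratic form in the coefficients $\{a_{\ell m}\}$ and then extract its leading behaviour. First I would invert the Cholesky relations of the previous subsection to write each standardized field $Y_{i}(x)$ as an explicit linear combination of the frame derivatives $e_{a}^{x}f_{\ell}$, $e_{a}^{x}e_{b}^{x}f_{\ell}$; by isotropy the matrix $\Lambda_{\ell}$ is constant in $x$ when expressed in the orthonormal frame $\{\vec e_{\theta},\vec e_{\varphi}\}$, so these relations hold at every point. Substituting the harmonic expansion $f_{\ell}(\theta,\varphi)=\sum_{m}a_{\ell m}\,g_{\ell m}(\theta)e^{im\varphi}$, with $g_{\ell m}(\theta)=\sqrt{(\ell-m)!/(\ell+m)!}\,P_{\ell}^{m}(\cos\theta)$, and using the explicit Hessian formula of Section \ref{Background and Notation} (so that each $e_{a}^{x}e_{b}^{x}f_{\ell}$ becomes a sum over $m$ of $a_{\ell m}$ times an expression built from $g_{\ell m}$, $\dot g_{\ell m}$, $\ddot g_{\ell m}$ and $m/\sin\theta$), every integrand turns into a double sum over $(m,m')$.

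The next step is the $\varphi$-integration. Each product of two derivative factors carries the phase $e^{i(m+m')\varphi}$, so integrating over $\varphi$ forces $m'=-m$; combined with the reflection identities $g_{\ell,-m}=(-1)^{m}g_{\ell m}$ and $a_{\ell,-m}=(-1)^{m}\bar a_{\ell m}$, every cross term collapses and one is left with a single diagonal sum $2\pi\sum_{m}|a_{\ell m}|^{2}\,I_{m}$, where $I_{m}$ is a one-dimensional integral over $\theta\in[0,\pi]$ of a product of (at most second) $\theta$-derivatives of $g_{\ell m}$ against a weight made of powers of $\sin\theta$ and $\cos\theta$. Since $\mathbb{E}|a_{\ell m}|^{2}=1$ and, pointwise, $\mathbb{E}[Y_{i}Y_{j}]=\delta_{ij}$, the deterministic part is fixed and each integral takes the form $2\pi\sum_{m}\{|a_{\ell m}|^{2}-1\}I_{m}$ up to an exact constant; the isolated term $m=0$, where the singular-weight moments degenerate, is elementary and can be absorbed into the remainder.

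The heart of the argument is the evaluation of the scalar integrals $I_{m}$. My strategy is to remove second derivatives using the associated Legendre equation $\frac{d}{d\theta}(\sin\theta\,\dot g_{\ell m})=-\sin\theta\,[\lambda_{\ell}-m^{2}/\sin^{2}\theta]\,g_{\ell m}$ together with integration by parts (the boundary terms vanishing at the poles), reducing every $I_{m}$ to combinations of a few base moments. Two of these are classical, namely $\int_{0}^{\pi}g_{\ell m}^{2}\sin\theta\,d\theta=2/(2\ell+1)$ and $\int_{0}^{\pi}g_{\ell m}^{2}/\sin\theta\,d\theta=1/|m|$, and already close several $I_{m}$ into exact rational functions of $(\ell,m)$; the remaining moments involve singular weights $\sin^{-k}\theta$ acting on products of $g_{\ell m}$ and its derivatives. \emph{This is the step I expect to be the main obstacle}: those moments do not all reduce to elementary closed forms, and controlling them near $\theta\to 0,\pi$, where the weights blow up, requires the precise uniform-in-$m$ asymptotics of $P_{\ell}^{m}$ and its derivatives developed in the Appendix. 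The outcome is, for each $I_{m}$, a leading rational function of $(\ell,m)$ whose expansion in powers of $m/\ell$ produces exactly the polynomial coefficients displayed in the statement (for instance $[-1/\ell+3m/\ell^{2}-2m^{3}/\ell^{4}]$ for $A_{35}$), together with an explicit error $\varepsilon_{m}(\ell)$.

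Finally I would collect the errors into $r_{i}(\ell)=2\pi\sum_{m}\{|a_{\ell m}|^{2}-1\}\,\varepsilon_{m}(\ell)$. Since the $\{|a_{\ell m}|^{2}-1\}$ are centred and independent across distinct values of $|m|$ (the identity $|a_{\ell,-m}|^{2}=|a_{\ell m}|^{2}$ merely doubling the paired variances), all cross terms vanish and $\mathbb{E}[r_{i}(\ell)]^{2}\le C\sum_{m}\varepsilon_{m}(\ell)^{2}$; the uniform bounds obtained in the previous step then give $\sum_{m}\varepsilon_{m}(\ell)^{2}=O(\ell^{-2})$, whence $\sqrt{\mathbb{E}[r_{i}(\ell)]^{2}}=O(\ell^{-1})$ uniformly over $i$, as claimed. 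The same bookkeeping applied to the product $Y_{3}Y_{5}$ yields the bound on $r_{0}(\ell)$.
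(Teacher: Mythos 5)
Your proposal gets the architecture right --- inverting the Cholesky relations to express $Y_{3},Y_{4},Y_{5}$ through the frame derivatives (together with $e_{1}^{x}e_{1}^{x}f_{\ell}+e_{2}^{x}e_{2}^{x}f_{\ell}=-\lambda_{\ell}f_{\ell}$), substituting the expansion in $a_{\ell m}Y_{\ell m}$, using the $\varphi$-integration and the reflection identities to collapse everything onto diagonal sums $\sum_{m}|a_{\ell m}|^{2}\,I_{m}$, and bounding the remainder by the (pairwise) independence of the $|a_{\ell m}|^{2}$. This is exactly how the paper organizes the proof. However, there is a genuine gap at what you yourself flag as ``the main obstacle'': you leave the evaluation of the singular-weight $\theta$-moments unresolved and propose to handle them via uniform-in-$m$ asymptotics of $P_{\ell}^{m}$ near the poles. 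That is not what is needed, and it is the harder road: obtaining error terms $\varepsilon_{m}(\ell)$ with $\sum_{m}\varepsilon_{m}(\ell)^{2}=O(\ell^{-2})$ uniformly over all $|m|\leq\ell$ from pointwise asymptotics of associated Legendre functions (which change character across the transition regions $|m|\approx\ell\sin\theta$) would require a substantial separate analysis that your sketch does not supply.

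The paper avoids asymptotics entirely: every one of the eight moments $J_{a}(\ell,m)$ is evaluated \emph{exactly} as a rational function of $(\ell,m)$. The mechanism is a pair of algebraic recurrences, namely
\begin{equation*}
\frac{P_{\ell }^{m}(z)}{\sqrt{1-z^{2}}}=-\frac{1}{2m}\left[ (\ell+m-1)(\ell +m)P_{\ell -1}^{m-1}(z)+P_{\ell -1}^{m+1}(z)\right] ,\qquad
(1-z^{2})\frac{d}{dz}P_{\ell }^{m}(z)=\frac{(\ell+1)(\ell +m)P_{\ell -1}^{m}(z)-\ell (\ell -m+1)P_{\ell +1}^{m}(z)}{2\ell +1},
\end{equation*}
which trade each singular factor $(1-z^{2})^{-1/2}$ or each derivative for Legendre functions at shifted indices; after squaring, the cross terms vanish by orthogonality (or reduce to Samaddar's formula (24i)) and the diagonal terms close via the two classical integrals $\int_{-1}^{1}(1-z^{2})^{-1}\{P_{\ell }^{m}\}^{2}dz=\frac{(\ell +m)!}{m(\ell -m)!}$ and $\int_{-1}^{1}zP_{\ell }^{m}\frac{d}{dz}P_{\ell }^{m}\,dz=\delta _{0,m}-\frac{(\ell +m)!}{(2\ell +1)(\ell -m)!}$. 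Your worry that the singular moments ``do not all reduce to elementary closed forms'' is therefore unfounded --- they all do --- but since you neither carry out this reduction nor provide a workable substitute for it, the displayed polynomial coefficients (e.g.\ $[-\frac{1}{\ell}+\frac{3m}{\ell^{2}}-\frac{2m^{3}}{\ell^{4}}]$ for $A_{35}$) and the uniform $O(\ell^{-1})$ remainder bound are not actually established by your argument. To repair it, replace the proposed asymptotic analysis by the exact evaluations of Lemmas \ref{ALFsquares}--\ref{ALFsquaredderivatives} and then Taylor-expand the resulting rational functions in $m/\ell$; the per-term errors are then $O(\ell^{-2})$ uniformly in $m$, which gives $\sum_{m}\varepsilon_{m}(\ell)^{2}=O(\ell^{-3})$, more than enough for the claim.
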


The proof of Proposition \ref{AB} is postponed to the Appendix \ref{proofAB}%
. We are now in the position to conclude the main proof of this Section.%
\newline

\begin{proof}[Proof of Theorem \protect\ref{proj2}]
A simple rewriting of the results from Proposition \ref{hk} yields 
\begin{equation*}
h_{35}(u;\ell )=\ell ^{2}\left\{ \frac{\Phi (-u)}{2\sqrt{2}\pi }+u\phi (u)%
\frac{u^{2}+1}{6\sqrt{2}\pi }\right\} +O(\ell ),
\end{equation*}%
and also%
\begin{equation*}
k_{1}(u;\ell )=k_{2}(u;\ell )=-\ell ^{2}\frac{u\phi (u)}{4\pi }+O(\ell ),
\end{equation*}%
\begin{equation*}
k_{3}(u;\ell )=\ell ^{2}\left\{ \frac{\Phi (-u)}{4\pi }+u\phi (u)\frac{%
u^{2}+1}{6\pi }\right\} +O(\ell ),
\end{equation*}%
\begin{equation*}
k_{4}(u;\ell )=-\ell ^{2}\frac{\Phi (-u)}{4\pi }+O(\ell ),\hspace{0.7cm}%
k_{5}(u;\ell )=\ell ^{2}u\phi (u)\frac{u^{2}+1}{12\pi }+O(\ell ),
\end{equation*}%
where the terms $O(\ell )$ are all uniform over $u.$ Now replacing the
expressions which were derived in Proposition \ref{AB}, we can hence write
down the projection on the second order Wiener chaos as follows:%
\begin{equation*}
\mathtt{Proj}[\chi (A_{u}(f_{\ell };\mathbb{S}^{2}))|2]
\end{equation*}%
\begin{eqnarray*}
&=&\ell ^{2}\left\{ \frac{\Phi (-u)}{2\sqrt{2}\pi }+u\phi (u)\frac{u^{2}+1}{6%
\sqrt{2}\pi }\right\} \left\{ 4\pi \frac{\sqrt{2}}{3}\sum_{m=-\ell }^{\ell
}\{|a_{\ell m}|^{2}-1\}\left[ -\frac{1}{\ell }+\frac{3\,m}{\ell ^{2}}-\frac{%
2\,m^{3}}{\ell ^{4}}\right] \right\} \\
&&-\frac{1}{2}\ell ^{2}\frac{u\phi (u)}{4\pi }\left\{ 4\pi \,\sum_{m=-\ell
}^{\ell }\{|a_{\ell m}|^{2}-1\}\left[ \frac{1}{\ell }-\frac{m}{\ell ^{2}}%
\right] \right\} \\
&&-\frac{1}{2}\ell ^{2}\frac{u\phi (u)}{4\pi }\left\{ 4\pi \,\sum_{m=-\ell
}^{\ell }\{|a_{\ell m}|^{2}-1\}\frac{m}{\ell ^{2}}\right\} \\
&&+\frac{1}{2}\ell ^{2}\left\{ \frac{\Phi (-u)}{4\pi }+u\phi (u)\frac{u^{2}+1%
}{6\pi }\right\} \left\{ 4\pi \,\sum_{m=-\ell }^{\ell }\{|a_{\ell m}|^{2}-1\}%
\left[ \frac{4}{3\ell }-\frac{2m}{\ell ^{2}}+\frac{2m^{3}}{3\ell ^{4}}\right]
\right\} \\
&&-\frac{1}{2}\ell ^{2}\frac{\Phi (-u)}{4\pi }\left\{ 4\pi \,\times
2\sum_{m=-\ell }^{\ell }\{|a_{\ell m}|^{2}-1\}\left[ \frac{m}{\ell ^{2}}-%
\frac{m^{3}}{\ell ^{4}}\right] \right\} \\
&&+\frac{1}{2}\ell ^{2}u\phi (u)\frac{u^{2}+1}{12\pi }\left\{ 4\pi \,\frac{1%
}{6}\sum_{m=-\ell }^{\ell }\{|a_{\ell m}|^{2}-1\}\left[ \frac{1}{\ell }+%
\frac{8m^{3}}{\ell ^{4}}\right] \right\} +R_{1}(\ell ),
\end{eqnarray*}%
where the remainder term $R_{1}(\ell )$ is such that $\sqrt{\mathbb{E}%
[R_{1}(\ell )]^{2}}=O(\ell ),$ again uniformly over $u.$ We now show that
all terms which include the Gaussian cumulative distribution function cancel
exactly; more precisely, performing some simple manipulations it is
immediate to note that 
\begin{equation*}
\ell ^{2}\frac{\Phi (-u)}{2\sqrt{2}\pi }A_{35}(\ell )+\frac{1}{2}\ell ^{2}%
\frac{\Phi (-u)}{4\pi }B_{3}(\ell )-\frac{1}{2}\ell ^{2}\frac{\Phi (-u)}{%
4\pi }B_{4}(\ell )
\end{equation*}%
\begin{eqnarray*}
&=&\ell ^{2}\frac{\Phi (-u)}{2\sqrt{2}\pi }\left\{ 4\pi \frac{\sqrt{2}}{3}%
\sum_{m=-\ell }^{\ell }\{|a_{\ell m}|^{2}-1\}\left[ -\frac{1}{\ell }+\frac{%
3\,m}{\ell ^{2}}-\frac{2\,m^{3}}{\ell ^{4}}\right] \right\} \\
&&+\frac{1}{2}\ell ^{2}\frac{\Phi (-u)}{4\pi }\left\{ 4\pi \,\sum_{m=-\ell
}^{\ell }\{|a_{\ell m}|^{2}-1\}\left[ \frac{4}{3\ell }-\frac{2m}{\ell ^{2}}+%
\frac{2m^{3}}{3\ell ^{4}}\right] \right\} \\
&&-\frac{1}{2}\ell ^{2}\frac{\Phi (-u)}{4\pi }\left\{ 4\pi \,\times
2\sum_{m=-\ell }^{\ell }\{|a_{\ell m}|^{2}-1\}\left[ \frac{m}{\ell ^{2}}-%
\frac{m^{3}}{\ell ^{4}}\right] \right\} +R_{2}(\ell )
\end{eqnarray*}%
\begin{eqnarray*}
&=&2\ell ^{2}\frac{\Phi (-u)}{3}\left\{ \sum_{m=-\ell }^{\ell }\{|a_{\ell
m}|^{2}-1\}\left[ -\frac{1}{\ell }+\frac{3\,m}{\ell ^{2}}-\frac{2\,m^{3}}{%
\ell ^{4}}\right] \right\} \\
&&+\frac{1}{2}\ell ^{2}\Phi (-u)\left\{ \,\sum_{m=-\ell }^{\ell }\{|a_{\ell
m}|^{2}-1\}\left[ \frac{4}{3\ell }-\frac{2m}{\ell ^{2}}+\frac{2m^{3}}{3\ell
^{4}}\right] \right\} \\
&&-\ell ^{2}\Phi (-u)\left\{ \sum_{m=-\ell }^{\ell }\{|a_{\ell m}|^{2}-1\} 
\left[ \frac{m}{\ell ^{2}}-\frac{m^{3}}{\ell ^{4}}\right] \right\}
+R_{2}(\ell )=R_{2}(\ell ),
\end{eqnarray*}%
where again the remainder term is uniformly bounded by $O(\ell )$ in the
mean-square norm. Rearranging the remaining terms, we thus obtain%
\begin{equation*}
\mathtt{Proj}[\chi (A_{u}(f_{\ell };\mathbb{S}^{2}))|2]
\end{equation*}%
\begin{equation*}
=\ell ^{2}u\phi (u)\frac{u^{2}+1}{6\sqrt{2}\pi }A_{35}(\ell )-\frac{1}{2}%
\ell ^{2}u\phi (u)\frac{1}{4\pi }\{B_{1}(\ell )+B_{2}(\ell )\}
\end{equation*}%
\begin{eqnarray*}
&=&\ell ^{2}u\phi (u)\frac{u^{2}+1}{6\sqrt{2}\pi }\left\{ 4\pi \frac{\sqrt{2}%
}{3}\sum_{m=-\ell }^{\ell }\{|a_{\ell m}|^{2}-1\}\left[ -\frac{1}{\ell }+%
\frac{3\,m}{\ell ^{2}}-\frac{2\,m^{3}}{\ell ^{4}}\right] \right\} \\
&&-\frac{1}{2}\ell ^{2}u\phi (u)\frac{1}{4\pi }\left\{ 4\pi \,\sum_{m=-\ell
}^{\ell }\{|a_{\ell m}|^{2}-1\}\left[ \frac{1}{\ell }-\frac{m}{\ell ^{2}}%
\right] \right\} \\
&&-\frac{1}{2}\ell ^{2}u\phi (u)\frac{1}{4\pi }\left\{ 4\pi \,\sum_{m=-\ell
}^{\ell }\{|a_{\ell m}|^{2}-1\}\frac{m}{\ell ^{2}}\right\} \\
&&+\frac{1}{2}\ell ^{2}\left\{ u\phi (u)\frac{u^{2}+1}{6\pi }\right\}
\left\{ 4\pi \,\sum_{m=-\ell }^{\ell }\{|a_{\ell m}|^{2}-1\}\left[ \frac{4}{%
3\ell }-\frac{2m}{\ell ^{2}}+\frac{2m^{3}}{3\ell ^{4}}\right] \right\} \\
&&+\frac{1}{2}\ell ^{2}u\phi (u)\frac{u^{2}+1}{12\pi }\left\{ 4\pi \,\frac{1%
}{6}\sum_{m=-\ell }^{\ell }\{|a_{\ell m}|^{2}-1\}\left[ \frac{1}{\ell }+%
\frac{8m^{3}}{\ell ^{4}}\right] \right\} +R(\ell )
\end{eqnarray*}%
\begin{equation*}
=\ell ^{2}u\phi (u)(u^{2}+1)\frac{2}{9}\left\{ \sum_{m=-\ell }^{\ell
}\{|a_{\ell m}|^{2}-1\}\left[ -\frac{1}{\ell }+\frac{3\,m}{\ell ^{2}}-\frac{%
2\,m^{3}}{\ell ^{4}}\right] \right\}
\end{equation*}%
\begin{equation*}
-\frac{1}{2}\ell ^{2}u\phi (u)\left\{ \,\sum_{m=-\ell }^{\ell }\{|a_{\ell
m}|^{2}-1\}\left[ \frac{1}{\ell }-\frac{m}{\ell ^{2}}\right] \right\} -\frac{%
1}{2}\ell ^{2}u\phi (u)\left\{ \,\sum_{m=-\ell }^{\ell }\{|a_{\ell
m}|^{2}-1\}\frac{m}{\ell ^{2}}\right\}
\end{equation*}%
\begin{equation*}
+\frac{1}{2}\ell ^{2}\left\{ u\phi (u)(u^{2}+1)\frac{2}{3}\right\} \left\{
\,\sum_{m=-\ell }^{\ell }\{|a_{\ell m}|^{2}-1\}\left[ \frac{4}{3\ell }-\frac{%
2m}{\ell ^{2}}+\frac{2m^{3}}{3\ell ^{4}}\right] \right\}
\end{equation*}%
\begin{equation*}
+\frac{1}{2}\ell ^{2}u\phi (u)\frac{u^{2}+1}{18}\left\{ \sum_{m=-\ell
}^{\ell }\{|a_{\ell m}|^{2}-1\}\left[ \frac{1}{\ell }+\frac{8m^{3}}{\ell ^{4}%
}\right] \right\} +R(\ell )
\end{equation*}%
\begin{equation*}
=\ell ^{2}u\phi (u)(u^{2}+1)\frac{2}{9}\left\{ -\frac{1}{\ell }\sum_{m=-\ell
}^{\ell }\{|a_{\ell m}|^{2}-1\}\right\} -\frac{1}{2}\ell ^{2}u\phi
(u)\left\{ \,\sum_{m=-\ell }^{\ell }\{|a_{\ell m}|^{2}-1\}\frac{1}{\ell }%
\right\}
\end{equation*}%
\begin{equation*}
+\frac{1}{2}\ell ^{2}\left\{ u\phi (u)(u^{2}+1)\frac{2}{3}\right\} \left\{ \,%
\frac{4}{3\ell }\sum_{m=-\ell }^{\ell }\{|a_{\ell m}|^{2}-1\}\right\} +\frac{%
1}{2}\ell ^{2}u\phi (u)\frac{u^{2}+1}{18}\left\{ \frac{1}{\ell }%
\sum_{m=-\ell }^{\ell }\{|a_{\ell m}|^{2}-1\}\right\} +R(\ell )
\end{equation*}%
\begin{equation*}
=\ell u\phi (u)\frac{u^{2}-1}{4}\,\sum_{m=-\ell }^{\ell }\{|a_{\ell
m}|^{2}-1\}+R(\ell ),
\end{equation*}%
where $\sqrt{\mathbb{E}R^{2}(\ell )}=O(\ell ),$ as claimed.
\end{proof}

\section{Variance and Quantitative Central Limit Theorem}

\label{Variance_CLT}

In this section we prove Theorem \ref{clt}. Our first result is the
following.

\begin{lemma}
\label{Variance2ndchaos} As $\ell \rightarrow \infty ,$ for all $u\neq 0$ we
have that%
\begin{equation*}
\lim_{\ell \rightarrow \infty }\frac{\mathrm{Var}[\mathtt{Proj}[\chi
(A_{u}(f_{\ell };\mathbb{S}^{2}))|2]]}{\mathrm{Var}[\chi (A_{u}(f_{\ell };%
\mathbb{S}^{2}))]}=1+O\left( \frac{\log \ell }{\ell }\right) \text{ .}
\end{equation*}
\end{lemma}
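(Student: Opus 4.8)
My plan is to combine the explicit second-chaos expansion of Theorem \ref{proj2} with the orthogonality of the Wiener chaoses and the known asymptotics (\ref{variance0}) of the full variance. Writing $\chi(A_u(f_\ell;\mathbb{S}^2))-\mathbb{E}[\chi(A_u(f_\ell;\mathbb{S}^2))]=\sum_{q\geq 2}\mathtt{Proj}[\chi(A_u(f_\ell;\mathbb{S}^2))|q]$ and using $\mathcal{H}_q\perp\mathcal{H}_p$, I would first record the Pythagorean identity $\mathrm{Var}[\chi]=\mathrm{Var}[\mathtt{Proj}[\chi|2]]+\sum_{q\geq 3}\mathrm{Var}[\mathtt{Proj}[\chi|q]]$, so that the ratio in the statement is exactly $1-\big(\sum_{q\geq 3}\mathrm{Var}[\mathtt{Proj}[\chi|q]]\big)/\mathrm{Var}[\chi]$. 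Since by (\ref{variance0}) the denominator is of exact order $\ell^3$ for fixed $u\neq 0,\pm 1$, the whole problem reduces to showing that the variance carried by the chaoses of order $\geq 3$ is of order no larger than $\ell^2\log\ell$, hence negligible against the leading $\ell^3$ term.

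To evaluate the numerator I would use Theorem \ref{proj2}: with $\mathtt{Proj}[\chi|2]=M+R(\ell)$, where $M=\tfrac{\lambda_\ell}{2}H_1(u)H_2(u)\phi(u)\tfrac1{2\ell+1}\sum_{m}\{|a_{\ell m}|^2-1\}$ and $\mathbb{E}|R(\ell)|^2=O(\ell^2\log\ell)$ uniformly in $u$, the variance of the leading term is the one already displayed right after Theorem \ref{proj2}, namely $\mathrm{Var}[M]=\tfrac{\ell^3}{8\pi}(u^3-u)^2e^{-u^2}+O(\ell^2)$. A triangle-inequality (equivalently Cauchy--Schwarz) bound on the cross term $\mathrm{Cov}(M,R)$ then yields $\mathrm{Var}[\mathtt{Proj}[\chi|2]]=\tfrac{\ell^3}{8\pi}(u^3-u)^2e^{-u^2}(1+o(1))$, whose dominant term coincides exactly with that of $\mathrm{Var}[\chi]$ in (\ref{variance0}). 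Dividing, the ratio already converges to $1$; the quantitative rate is obtained by keeping track of the lower-order contributions in both numerator and denominator.

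The step I expect to be the crux is pinning the rate down to $O(\log\ell/\ell)$, since a direct Cauchy--Schwarz estimate of $\mathrm{Cov}(M,R)$ only controls $\mathrm{Var}[\mathtt{Proj}[\chi|2]]$ to relative precision $O(\sqrt{\log\ell/\ell})$ (because $\|M\|_{L^2}$ is of order $\ell^{3/2}$ while $R$ is controlled only through $\mathbb{E}|R|^2$). The structural fact that clarifies the situation is isotropy: the functional $\chi(A_u(f_\ell;\mathbb{S}^2))$ is invariant under the $SO(3)$ action on the coefficients $\{a_{\ell m}\}$, this action commutes with the chaos projections, and the $SO(3)$-invariant subspace of the second Wiener chaos is one-dimensional, spanned by $\sum_m\{|a_{\ell m}|^2-1\}$ (the Wick-reduced invariant quadratic form). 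Hence $\mathtt{Proj}[\chi|2]$ is an exact deterministic multiple of $\sum_m\{|a_{\ell m}|^2-1\}$, so that $R(\ell)$ is itself proportional to $M$ and $\mathrm{Var}[\mathtt{Proj}[\chi|2]]$ reduces to the square of a single scalar coefficient times $\mathrm{Var}\big[\sum_m\{|a_{\ell m}|^2-1\}\big]=2(2\ell+1)$. The sharp $\log\ell/\ell$ control, however, is cleanest to extract by bounding $\sum_{q\geq 3}\mathrm{Var}[\mathtt{Proj}[\chi|q]]$ directly through the exact variance computation carried out in this Section, which sharpens (\ref{variance0}); this is where the careful asymptotics of the Kac--Rice integrals underlying Lemma \ref{XXe} enter, and I regard this bookkeeping as the main technical obstacle.

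Finally I would record that the implied constant in $O(\log\ell/\ell)$ depends on $u$ and degenerates as $u\to 0,\pm 1$, where the coefficient $(u^3-u)^2e^{-u^2}$ vanishes and the leading $\ell^3$ term of both variances disappears; the statement is therefore read for each fixed $u\neq 0$ (with the non-degenerate $\ell^3$ behaviour for $u\neq\pm 1$), in agreement with the Berry-type cancellation at the origin discussed in the Introduction.
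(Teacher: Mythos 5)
Your route to the qualitative conclusion is essentially the paper's: Lemma \ref{Variance2ndchaos} is proved there by combining the expansion of Theorem \ref{proj2} with the elementary identity $\mathrm{Var}\big[\sum_{m}\{|a_{\ell m}|^{2}-1\}\big]=2(2\ell+1)$ (obtained, exactly as you indicate, by splitting into $2\ell+1$ independent $\chi_{1}^{2}$ variables) and with the asymptotics $\mathrm{Var}[\chi(A_{u}(f_{\ell};\mathbb{S}^{2}))]=\tfrac{1}{4}\ell^{3}\{u\phi(u)(u^{2}-1)\}^{2}+O(\ell^{2}\log\ell)$ imported from \cite{CMW}, \cite{CW}. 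Your two structural additions are both correct and not in the paper: the Pythagorean identity $\mathrm{Var}[\chi]=\sum_{q\geq 2}\mathrm{Var}[\mathtt{Proj}[\chi|q]]$, and the Schur-lemma observation that the $SO(3)$-invariant part of the second chaos built from a single frequency is one-dimensional, which does force $\mathtt{Proj}[\chi(A_{u}(f_{\ell};\mathbb{S}^{2}))|2]$ to be an exact deterministic multiple of $\sum_{m}\{|a_{\ell m}|^{2}-1\}$. Up to the limit being $1$, your argument goes through.

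The genuine gap is the one you flag yourself and then do not close: the rate $O(\log\ell/\ell)$. Even granting the exact proportionality $\mathtt{Proj}[\chi|2]=c_{\ell}(u)\sum_{m}\{|a_{\ell m}|^{2}-1\}$, Theorem \ref{proj2} only pins down $c_{\ell}(u)$ up to an error $\epsilon_{\ell}$ with $\epsilon_{\ell}^{2}\cdot 2(2\ell+1)=O(\ell^{2}\log\ell)$, i.e. $\epsilon_{\ell}/c_{\ell}(u)=O(\sqrt{\log\ell/\ell})$, so both the isotropy reduction and Cauchy--Schwarz on $\mathrm{Cov}(M,R)$ stall at relative precision $O(\sqrt{\log\ell/\ell})$, exactly as you observe. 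Your proposed escape --- ``the exact variance computation carried out in this Section'' and ``the Kac--Rice integrals underlying Lemma \ref{XXe}'' --- does not point at anything usable: the only variance computed in this section is $2(2\ell+1)$, and Lemma \ref{XXe} concerns the a.s.\ and $L^{2}$ convergence of the delta approximation, not variance asymptotics. For comparison, the paper's own proof obtains the stated rate simply by identifying $\mathrm{Var}[\mathtt{Proj}[\chi|2]]$ with the variance of the leading term $M$ and dividing by the $\mathrm{Var}[\chi]$ expansion, i.e. by silently discarding the cross term $\mathrm{Cov}(M,R)$; so the obstruction you raise is real, and to close it one must either sharpen the control of the remainder in Theorem \ref{proj2} (e.g.\ show $\mathbb{E}[MR]=O(\ell^{2}\log\ell)$) or settle for $1+O(\sqrt{\log\ell/\ell})$. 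Your final remark on the degeneration of the implied constant as $u\to 0,\pm 1$ is correct and consistent with the paper's framing.
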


\begin{proof}
In \cite{CMW}, \cite{CW} it is shown that, for all $u\neq 0$ 
\begin{equation*}
\mathrm{Var}[\chi (A_{u}(f_{\ell };\mathbb{S}^{2}))]=\frac{1}{4}\ell
^{3}\left\{ u\phi (u)(u^{2}-1)\right\} ^{2}+O\left( \ell ^{2}\log \ell
\right) \text{ ,}
\end{equation*}%
the error term being uniform over $u.$ In view of the form of $\mathtt{Proj}%
[\chi (A_{u}(f_{\ell };\mathbb{S}^{2}))|2],$ we need only consider the
asymptotic variance of $\,\sum_{m=-\ell }^{\ell }\{|a_{\ell m}|^{2}-1\}$;
the details are trivial, but we report them for completeness. Recall first
that 
\begin{equation*}
|a_{\ell m}|^{2}=\left\{ \mathrm{Re}(a_{\ell m})\right\} ^{2}+\left\{ 
\mathrm{Im}(a_{\ell m})\right\} ^{2}=|a_{\ell ,-m}|^{2},
\end{equation*}%
where $\mathrm{Re}(a_{\ell m}),\mathrm{Im}(a_{\ell m})$ are zero-mean,
independent Gaussian variables with variance $\frac{1}{2}$; on the other
hand, $a_{\ell 0}$ follows a standard $N(0,1)$ Gaussian distribution. We can
hence write%
\begin{equation*}
\sum_{m=-\ell }^{\ell }\{|a_{\ell m}|^{2}-1\}=\{|a_{\ell
0}|^{2}-1\}+2\sum_{m=1}^{\ell }\{|a_{\ell m}|^{2}-1\}
\end{equation*}%
\begin{eqnarray*}
&=&\{|a_{\ell 0}|^{2}-1\}+2\sum_{m=1}^{\ell }\{\mathrm{Re}|a_{\ell m}|^{2}-%
\frac{1}{2}\}+2\sum_{m=1}^{\ell }\{\mathrm{Im}|a_{\ell m}|^{2}-\frac{1}{2}\}
\\
&=&\{|a_{\ell 0}|^{2}-1\}+\sum_{m=1}^{\ell }\{\mathrm{Re}|\sqrt{2}a_{\ell
m}|^{2}-1\}+\sum_{m=1}^{\ell }\{\mathrm{Im}|\sqrt{2}a_{\ell m}|^{2}-1\}.
\end{eqnarray*}%
Now note that $|a_{\ell 0}|^{2}$, $\mathrm{Re}|\sqrt{2}a_{\ell m}|^{2}$, $%
\mathrm{Im}|\sqrt{2}a_{\ell m}|^{2}$, $m=1,...,\ell $ are a set of $2\ell +1$
independent variables distributed according to a $\chi _{1}^{2}$ with one
degree of freedom; it follows immediately that%
\begin{equation*}
\mathrm{Var}\left[ \sum_{m=-\ell }^{\ell }\{|a_{\ell m}|^{2}-1\}\right]
=2(2\ell +1).
\end{equation*}%
Thus 
\begin{eqnarray*}
\lim_{\ell \rightarrow \infty }\frac{\mathrm{Var}\left[ \mathtt{Proj}[\chi
(A_{u}(f_{\ell };\mathbb{S}^{2}))|2]\right] }{\frac{1}{4}\ell ^{3}\left\{
u\phi (u)(u^{2}-1)\right\} ^{2}} &=&\lim_{\ell \rightarrow \infty }\frac{%
\mathrm{Var}\left[ \frac{1}{4}\ell u\phi (u)(u^{2}-1)\,\sum_{m=-\ell }^{\ell
}\{|a_{\ell m}|^{2}-1\}\right] }{\frac{1}{4}\ell ^{3}\left\{ u\phi
(u)(u^{2}-1)\right\} ^{2}} \\
&=&\frac{1}{4}\lim_{\ell \rightarrow \infty }\frac{\mathrm{Var}\,\left[
\sum_{m=-\ell }^{\ell }\{|a_{\ell m}|^{2}-1\}\right] }{\ell }=1,
\end{eqnarray*}%
and the result we claimed follows immediately.
\end{proof}

\bigskip

\begin{proof}[Proof of Theorem \protect\ref{clt}]
We recall that the Wasserstein distance between random variables $X,Y$ is
defined by 
\begin{equation*}
d_{W}(X,Y):=\sup_{h\in Lip(1)}\left\vert \mathbb{E}h(X)-\mathbb{E}%
h(Y)\right\vert ;
\end{equation*}%
also, $d_{W}(X,Y)\leq \sqrt{\mathbb{E}\left\vert X-Y\right\vert ^{2}},$ i.e.
Wasserstein distance is always bounded by the $L^{2}$-metric, see \cite%
{noupebook} for further characterizations and details. By the triangle
inequality, we have%
\begin{equation*}
d_{W}\left( \frac{\chi (A_{u}(f_{\ell };\mathbb{S}^{2}))-\mathbb{E}[\chi
(A_{u}(f_{\ell };\mathbb{S}^{2}))]}{\sqrt{\mathrm{Var}[\chi (A_{u}(f_{\ell };%
\mathbb{S}^{2}))]}},Z\right)
\end{equation*}%
\begin{eqnarray*}
&\leq &d_{W}\left( \frac{\chi (A_{u}(f_{\ell };\mathbb{S}^{2}))-\mathbb{E}%
[\chi (A_{u}(f_{\ell };\mathbb{S}^{2}))]}{\sqrt{\mathrm{Var}[\chi
(A_{u}(f_{\ell };\mathbb{S}^{2}))]}},\frac{\mathtt{Proj}[\chi (A_{u}(f_{\ell
};\mathbb{S}^{2}))|2]}{\sqrt{\mathrm{Var}[\chi (A_{u}(f_{\ell };\mathbb{S}%
^{2}))]}}\right) \\
&&+d_{W}\left( \frac{\mathtt{Proj}[\chi (A_{u}(f_{\ell };\mathbb{S}^{2}))|2]%
}{\sqrt{\mathrm{Var}[\chi (A_{u}(f_{\ell };\mathbb{S}^{2}))]}},Z\right) 
\text{ ,}
\end{eqnarray*}%
and hence%
\begin{eqnarray*}
&&d_{W}\left( \frac{\chi (A_{u}(f_{\ell };\mathbb{S}^{2}))-\mathbb{E}[\chi
(A_{u}(f_{\ell };\mathbb{S}^{2}))]}{\sqrt{\mathrm{Var}[\chi (A_{u}(f_{\ell };%
\mathbb{S}^{2}))]}},Z\right) \\
&=&d_{W}\left( \frac{\mathtt{Proj}[\chi (A_{u}(f_{\ell };\mathbb{S}^{2}))|2]%
}{\sqrt{\mathrm{Var}[\chi (A_{u}(f_{\ell };\mathbb{S}^{2}))]}},Z\right) +O(%
\sqrt{\frac{\log \ell }{\ell }})\text{ ,}
\end{eqnarray*}%
because%
\begin{equation*}
\mathbb{E}\left\{ \frac{\chi (A_{u}(f_{\ell };\mathbb{S}^{2}))-\mathbb{E}%
[\chi (A_{u}(f_{\ell };\mathbb{S}^{2}))]-\mathtt{Proj}[\chi (A_{u}(f_{\ell };%
\mathbb{S}^{2}))|2]}{\sqrt{\mathrm{Var}[\chi (A_{u}(f_{\ell };\mathbb{S}%
^{2}))]}}\right\} ^{2}=O(\frac{\log \ell }{\ell }),
\end{equation*}%
uniformly over $u.$ By a similar argument%
\begin{equation*}
d_{W}\left( \frac{\mathtt{Proj}[\chi (A_{u}(f_{\ell };\mathbb{S}^{2}))|2]}{%
\sqrt{\mathrm{Var}[\chi (A_{u}(f_{\ell };\mathbb{S}^{2}))]}},Z\right)
=d_{W}(F_{\ell }(u);Z)+O(\sqrt{\frac{1}{\ell }})\text{ ,}
\end{equation*}%
where we wrote for notational simplicity%
\begin{equation*}
F_{\ell }(u):=\frac{\frac{\lambda _{\ell }}{2}\left\{ H_{1}(u)H_{2}(u)\phi
(u)\right\} \frac{1}{2\ell +1}\,\sum_{m=-\ell }^{\ell }\{|a_{\ell m}|^{2}-1\}%
}{\sqrt{\mathrm{Var}[\chi (A_{u}(f_{\ell };\mathbb{S}^{2}))]}}\text{ ;}
\end{equation*}%
from Corollary 5.2.10 in \cite{noupebook} we have%
\begin{eqnarray*}
d_{W}\left( F_{\ell }(u),Z\right) &\leq &\sqrt{\frac{2\left( \mathbb{E}%
F_{\ell }^{4}(u)-3\left[ \mathbb{E}F_{\ell }^{2}(u)\right] ^{2}\right) }{%
3\pi \left[ \mathbb{E}F_{\ell }^{2}(u)\right] ^{2}}}+\sqrt{\frac{\sqrt{\frac{%
2}{\pi }}\left( \mathbb{E}F_{\ell }^{2}(u)-1\right) }{\mathbb{E}F_{\ell
}^{2}(u)\vee 1}} \\
&=&\sqrt{\frac{2\left( \mathbb{E}F_{\ell }^{4}(u)-3\left[ \mathbb{E}F_{\ell
}^{2}(u)\right] ^{2}\right) }{3\pi \left[ \mathbb{E}F_{\ell }^{2}(u)\right]
^{2}}}+O\left( \sqrt{\frac{\log \ell }{\ell }}\right) ,
\end{eqnarray*}%
in view of Lemma \ref{Variance2ndchaos}. To complete the proof, it suffices
to notice that for every fixed $u,$ $\mathbb{E}F_{\ell }^{4}(u)-3\left[ 
\mathbb{E}F_{\ell }^{2}(u)\right] ^{2}$ is the fourth-order cumulant of the
sample average of $2\ell +1$ independent random variables with finite
moments of all order; it is then a standard exercise to show that this
quantity is $O(\ell ^{-1}),$ which completes the proof$.$
\end{proof}

\begin{remark}
The Theorem can be generalized to joint convergence for every fixed set of
threshold levels $(u_{1},...,u_{p}),$ $p\in \mathbb{N}$; details are trivial
and hence omitted. A more interesting possibility would be to investigate a
Functional Central Limit Theorem over $u;$ this extensions seems possible,
but we do not consider it here for brevity's sake.
\end{remark}

\section{Appendix A: Proof of Proposition \protect\ref{hk}}

\label{proofhk}

\noindent Let $Y$ be a standard random variable; for the projection
coefficients of the Dirac's delta function, (which are given for instance in 
\cite{noupebook}, Chapter 1, see also \cite{MPRW2015}), we introduce the
following notation: 
\begin{equation*}
\varphi _{a}(\ell )=\lim_{\varepsilon \rightarrow 0}\mathbb{E}%
[H_{a}(Y)\delta _{\varepsilon }(\lambda _{1}\,Y)],\hspace{1cm}a=0,1,2.
\end{equation*}%
We also use $\theta _{ab}$ to denote projection coefficients involving two
random variables $Y_{a},Y_{b}$ and $\psi _{abcd}(u)$ to denote those
coefficients that involve four, i.e., we set 
\begin{equation}
\theta _{ab}(u)=\mathbb{E}\left[ Y_{a}Y_{b}1\hspace{-0.27em}\mbox{\rm l}%
_{\left\{ \frac{\lambda _{2}+\lambda _{3}}{\lambda }Y_{3}+\frac{\lambda _{5}%
}{\lambda }Y_{5}\leq -u\right\} }\right] ,\hspace{1cm}a,b=3,4,5,
\label{thetadef}
\end{equation}%
and 
\begin{equation}
\psi _{abcd}(u)=\mathbb{E}\left[ Y_{a}Y_{b}Y_{c}Y_{d}1\hspace{-0.27em}%
\mbox{\rm l}_{\left\{ \frac{\lambda _{2}+\lambda _{3}}{\lambda }Y_{3}+\frac{%
\lambda _{5}}{\lambda }Y_{5}\leq -u\right\} }\right] ,\hspace{1cm}%
a,b,c,d=3,4,5.  \label{psi}
\end{equation}%
The exact behaviour of these coefficients as a function of the level $u$ is
given in the three Lemmas to follow.

\begin{lemma}
\label{varphi} We have 
\begin{equation*}
\varphi_{a}(\ell )=%
\begin{cases}
\frac{1}{\sqrt{2\pi }\lambda _{1}}, & a=0, \\ 
0, & a=1, \\ 
-\frac{1}{\sqrt{2\pi }\lambda _{1}}, & a=2.%
\end{cases}%
\end{equation*}
\end{lemma}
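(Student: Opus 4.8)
The plan is to evaluate the limit directly from the definition, exploiting the fact that $Y$ is a single standard Gaussian variable and that $\delta_\varepsilon$ reduces here to the one-dimensional kernel $z\mapsto(2\varepsilon)^{-1}\mathbb{I}_{[-\varepsilon,\varepsilon]}(z)$ (the scalar marginal of the product kernel applied to the gradient). First I would write the expectation as a Gaussian integral and use the explicit form of the indicator to localise it:
\begin{equation*}
\mathbb{E}[H_a(Y)\delta_\varepsilon(\lambda_1 Y)]=\frac{1}{2\varepsilon}\int_{-\varepsilon/\lambda_1}^{\varepsilon/\lambda_1}H_a(y)\phi(y)\,dy,
\end{equation*}
where I have used that $|\lambda_1 y|\le\varepsilon$ is equivalent to $|y|\le\varepsilon/\lambda_1$, and that $\lambda_1=\sqrt{\lambda_\ell/2}>0$ under Condition \ref{Thecondition}, so that the rescaling is legitimate and the interval is genuinely nonempty.

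Next I would let $\varepsilon\to0$. Since $y\mapsto H_a(y)\phi(y)$ is continuous, the fundamental theorem of calculus (equivalently the mean value theorem for integrals) gives
\begin{equation*}
\frac{1}{2\varepsilon}\int_{-\varepsilon/\lambda_1}^{\varepsilon/\lambda_1}H_a(y)\phi(y)\,dy\longrightarrow\frac{1}{\lambda_1}H_a(0)\phi(0),
\end{equation*}
so that $\varphi_a(\ell)=\lambda_1^{-1}H_a(0)\phi(0)$. It then remains only to insert $\phi(0)=1/\sqrt{2\pi}$ together with the values of the Hermite polynomials at the origin, namely $H_0(0)=1$, $H_1(0)=0$ and $H_2(0)=-1$; this yields exactly the three cases in the statement.

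The computation is entirely elementary, so there is no genuine obstacle: the only points deserving a line of care are the interpretation of the scalar $\delta_\varepsilon$ as the one-dimensional normalisation $(2\varepsilon)^{-1}\mathbb{I}_{[-\varepsilon,\varepsilon]}$, and the justification of the limit, which follows from continuity of $H_a\phi$ (or, if one prefers, from dominated convergence, bounding $|H_a(y)\phi(y)|$ on $[-\varepsilon/\lambda_1,\varepsilon/\lambda_1]$ uniformly for small $\varepsilon$). The vanishing of $\varphi_1(\ell)$ is, unsurprisingly, a parity effect: $H_1(y)\phi(y)=y\phi(y)$ is odd and integrates to zero over the symmetric interval, consistent with $H_1(0)=0$.
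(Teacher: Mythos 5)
Your proof is correct and follows essentially the same route as the paper: rescale the indicator to the interval $[-\varepsilon/\lambda_1,\varepsilon/\lambda_1]$, normalise by $(2\varepsilon)^{-1}$, and let $\varepsilon\to 0$ to obtain $\lambda_1^{-1}H_a(0)\phi(0)$, which the paper carries out explicitly for $a=0$ and declares analogous for $a=1,2$. Your reading of the scalar $\delta_\varepsilon$ as the one-dimensional kernel with normalisation $(2\varepsilon)^{-1}$ agrees with the paper's own computation.
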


\begin{proof}
The result follows from the straightforward computation%
\begin{align*}
\varphi _{0}(\ell )& =\lim_{\varepsilon \rightarrow 0}\frac{1}{2\varepsilon }%
\int_{-\infty }^{\infty }H_{0}(y)\mathbb{I}_{[-\varepsilon ,\varepsilon
]}(\lambda _{1}y)\phi (y)dy \\
& =\lim_{\varepsilon \rightarrow 0}\frac{1}{2\varepsilon }\int_{-\varepsilon
/\lambda _{1}}^{\varepsilon /\lambda _{1}}\phi (y)dy=\frac{1}{\sqrt{2\pi }%
\lambda _{1}}\text{ };
\end{align*}%
the proof for $a=1,2$ is analogous.
\end{proof}

\bigskip

\noindent In what follows, to simplify the notation, we set $\alpha _{\ell }=%
\frac{\lambda _{2}+\lambda _{3}}{\lambda _{\ell }}$, $\beta _{\ell }=\frac{%
\lambda _{5}}{\lambda _{\ell }}$. Note that $\alpha _{\ell }^{2}+\beta
_{\ell }^{2}=1$ and $\alpha _{\ell }^{2}=\frac{2\lambda _{\ell }}{3\lambda
_{\ell }-2}$; we recall once again that we use $\phi (.)$ and $\Phi (.)$ to
denote as usual the density and distribution function of a standard Gaussian
random variable. Our next results are concerned with the analytic
expressions for the function $\theta _{ab}$.

\begin{lemma}
\label{theta}We have that 
\begin{equation*}
\theta _{33}(u)=\Phi (-u)+\frac{2\lambda _{\ell }}{3\lambda _{\ell }-2}%
u\,\phi (u),\;\theta _{35}(u)=\sqrt{2}\frac{\sqrt{\lambda _{\ell }}\sqrt{%
\lambda _{\ell }-2}}{3\lambda _{\ell }-2}\,u\,\phi (u),\;\theta
_{44}(u)=\Phi (-u).
\end{equation*}
\end{lemma}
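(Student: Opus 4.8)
The plan is to exploit that $Y_{3},Y_{4},Y_{5}$ are independent standard Gaussians and that, since $\alpha _{\ell }^{2}+\beta _{\ell }^{2}=1$, the linear combination $W:=\alpha _{\ell }Y_{3}+\beta _{\ell }Y_{5}$ entering the indicator is itself a standard Gaussian variable. The coefficient $\theta _{44}(u)$ is then immediate: since $Y_{4}$ is independent of $W$, the expectation factorizes as $\mathbb{E}[Y_{4}^{2}]\,\mathbb{P}(W\leq -u)=\Phi (-u)$.

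For $\theta _{33}$ and $\theta _{35}$ I would introduce the companion variable $V:=-\beta _{\ell }Y_{3}+\alpha _{\ell }Y_{5}$, so that $(W,V)$ is an orthogonal transformation of $(Y_{3},Y_{5})$, hence a pair of independent standard Gaussians, with inverse $Y_{3}=\alpha _{\ell }W-\beta _{\ell }V$ and $Y_{5}=\beta _{\ell }W+\alpha _{\ell }V$. Expanding $Y_{3}^{2}$ and $Y_{3}Y_{5}$ into monomials in $W^{2}$, $WV$ and $V^{2}$, and using independence together with $\mathbb{E}[V]=0$ and $\mathbb{E}[V^{2}]=1$, every $WV$ cross term vanishes and each $V^{2}$ term contributes a factor $\Phi (-u)$. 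What survives in each case is the single one-dimensional moment $\mathbb{E}[W^{2}\,\mathbb{I}_{\{W\leq -u\}}]$.

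This moment is the only genuine computation: integrating $w^{2}\phi (w)$ over $(-\infty ,-u]$ by parts via $\phi ^{\prime }(w)=-w\phi (w)$ gives $\mathbb{E}[W^{2}\mathbb{I}_{\{W\leq -u\}}]=\Phi (-u)+u\phi (u)$. Substituting back yields $\theta _{33}(u)=\alpha _{\ell }^{2}u\phi (u)+(\alpha _{\ell }^{2}+\beta _{\ell }^{2})\Phi (-u)=\Phi (-u)+\alpha _{\ell }^{2}u\phi (u)$ and $\theta _{35}(u)=\alpha _{\ell }\beta _{\ell }\,u\phi (u)$. The stated expressions then follow by inserting $\alpha _{\ell }^{2}=\tfrac{2\lambda _{\ell }}{3\lambda _{\ell }-2}$, $\beta _{\ell }^{2}=1-\alpha _{\ell }^{2}=\tfrac{\lambda _{\ell }-2}{3\lambda _{\ell }-2}$, and $\alpha _{\ell }\beta _{\ell }=\tfrac{\sqrt{2}\sqrt{\lambda _{\ell }}\sqrt{\lambda _{\ell }-2}}{3\lambda _{\ell }-2}$, where the positive sign of the last quantity is justified by the positivity of the Cholesky entries $\lambda _{2},\lambda _{3},\lambda _{5}$, so that $\alpha _{\ell },\beta _{\ell }>0$ for $\lambda _{\ell }>2$.

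I do not expect any serious obstacle here: once the rotation decoupling $W$ from the independent $V$ is in place, the lemma reduces to a finite-dimensional Gaussian moment computation. The only points requiring mild care are the bookkeeping ensuring that only the $W^{2}$ term survives in each expansion, and the verification that $\alpha _{\ell }\beta _{\ell }>0$ so that $\theta _{35}$ carries the correct sign.
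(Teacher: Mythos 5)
Your proof is correct, and it reaches the stated formulas by a genuinely different route from the paper. The paper works with the pair $(Y_{3},Y_{5})$ directly: it conditions on $Y_{3}$, integrates out $Y_{5}$ to produce one-dimensional integrals of the form $\int y^{k}\phi (y)\Phi \bigl(\tfrac{-u-\alpha _{\ell }y}{\beta _{\ell }}\bigr)dy$, and then evaluates these by integration by parts against $\Phi $ (its auxiliary Lemmas \ref{aa1} and \ref{aa2}, the first of which is really the statement that $\alpha _{\ell }Y_{3}+\beta _{\ell }Y_{5}$ is standard Gaussian, proved via an $\mathrm{erf}$ convolution identity). You instead perform the orthogonal change of variables $(W,V)=(\alpha _{\ell }Y_{3}+\beta _{\ell }Y_{5},\,-\beta _{\ell }Y_{3}+\alpha _{\ell }Y_{5})$, which decouples the indicator (a function of $W$ alone) from an independent Gaussian $V$; every coefficient then factorizes and collapses to the single truncated moment $\mathbb{E}[W^{2}\mathbb{I}_{\{W\leq -u\}}]=\Phi (-u)+u\phi (u)$. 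Your computation of that moment, the vanishing of the $WV$ cross terms, and the final substitutions $\alpha _{\ell }^{2}=\tfrac{2\lambda _{\ell }}{3\lambda _{\ell }-2}$, $\beta _{\ell }^{2}=\tfrac{\lambda _{\ell }-2}{3\lambda _{\ell }-2}$, $\alpha _{\ell }\beta _{\ell }=\tfrac{\sqrt{2}\sqrt{\lambda _{\ell }}\sqrt{\lambda _{\ell }-2}}{3\lambda _{\ell }-2}$ are all right. What your approach buys is economy: it dispenses with the paper's convolution lemmas for this statement, and the same rotation would also streamline the fourth-moment coefficients $\psi _{abcd}$ in the next lemma, where the paper again integrates by parts repeatedly. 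Two marginal points of care: the sign $\alpha _{\ell }\beta _{\ell }>0$ is better justified by reading it off the explicit entries of $\Lambda _{\ell }$ (positivity of the off-diagonal entry $\lambda _{2}$ is not a consequence of the Cholesky construction itself, only of the displayed formula), and the argument implicitly assumes $\beta _{\ell }\neq 0$, i.e.\ $\lambda _{\ell }>2$, an assumption the paper's proof shares since it divides by $\beta _{\ell }$.
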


\begin{proof}
The proof is a simple exercise in the computation of moments and
convolutions of normal variables. More precisely, let $X$, $Y$ and $Z$ be
three independent standard Gaussian random variables; in view of Lemma \ref%
{aa2}, we have 
\begin{align*}
\theta _{33}(u)&=\mathbb{E}\left[ Y^{2}1\hspace{-0.27em}\mbox{\rm l}%
_{\left\{ \alpha _{\ell }Y+\beta _{\ell }X\leq -u\right\} }\right]
=\int_{-\infty }^{\infty }y^{2}\phi (y)\Phi \left( \frac{-u-\alpha _{\ell }y%
}{\beta _{\ell }}\right) dy \\
&=\Phi (-u)+\alpha _{\ell }^{2}\,u\,\phi (-u).
\end{align*}%
Moreover 
\begin{equation*}
\theta _{35}(u)=\mathbb{E}\left[ XY1\hspace{-0.27em}\mbox{\rm l}_{\left\{
\alpha _{\ell }Y+\beta _{\ell }X\leq -u\right\} }\right] =\int_{-\infty
}^{\infty }y\phi (y)dy\int_{-\infty }^{\frac{-u-\alpha _{\ell }y}{\beta
_{\ell }}}x\phi (x)dx
\end{equation*}%
\begin{equation*}
=-\int_{-\infty }^{\infty }y\phi (y)\phi \left( \frac{-u-\alpha _{\ell }y}{%
\beta _{\ell }}\right) dy=\alpha _{\ell }\,\beta _{\ell }\,u\,\phi (-u),
\end{equation*}%
and finally by applying Lemma \ref{aa1} we obtain 
\begin{equation*}
\theta _{44}(u)=\mathbb{E}\left[ Z^{2}1\hspace{-0.27em}\mbox{\rm l}_{\left\{
\alpha _{\ell }Y+\beta _{\ell }X\leq -u\right\} }\right] =\int_{-\infty
}^{\infty }\phi (y)\Phi \left( \frac{-u-\alpha _{\ell }y}{\beta _{\ell }}%
\right) dy=\Phi (-u).
\end{equation*}
\end{proof}

The computation of expected values involving four moments is clearly more
challenging and is detailed in the Lemma below.

\begin{lemma}
a) The expression for the coefficients involving only $Y_{3}$ or $Y_{4}$ is
equal to%
\begin{equation*}
\psi _{3333}(u)=3\Phi (-u)+4\lambda _{\ell }\frac{\lambda _{\ell }(u^{2}+6)-6%
}{(3\lambda _{\ell }-2)^{2}}u\phi (u),\;\;\psi _{4444}(u)=3\Phi (-u).
\end{equation*}%
b) The expression for coefficients involving cross products of $Y_{3}$ and $%
Y_{5}$ are equal to%
\begin{align*}
\psi _{3355}(u)& =\Phi (-u)+\frac{4+2u^{2}\lambda _{\ell }(\lambda _{\ell
}-2)+3\lambda _{\ell }^{2}}{(3\lambda _{\ell }-2)^{2}}u\phi (u), \\
\psi _{3555}(u)& =\sqrt{2}(\lambda _{\ell }u^{2}-2u^{2}+6\lambda _{\ell })%
\frac{\sqrt{\lambda _{\ell }}\sqrt{\lambda _{\ell }-2}}{(3\lambda _{\ell
}-2)^{2}}u\phi (u), \\
\psi _{3335}(u)& =\sqrt{2}(2\lambda _{\ell }u^{2}+3\lambda _{\ell }-6)\frac{%
\sqrt{\lambda _{\ell }}\sqrt{\lambda _{\ell }-2}}{(3\lambda _{\ell }-2)^{2}}%
u\phi (u).
\end{align*}%
c) The expression for coefficients involving cross-products with $Y_{4}$ are
as follows:%
\begin{align*}
\psi _{3344}(u)& =\Phi (-u)+\frac{2\lambda _{\ell }}{3\lambda _{\ell }-2}%
u\phi (u), \\
\psi _{4455}(u)& =\Phi (-u)+\frac{\lambda _{\ell }-2}{3\lambda _{\ell }-2}%
u\phi (u), \\
\psi _{3445}(u)& =\sqrt{2}\frac{\sqrt{\lambda _{\ell }}\sqrt{\lambda _{\ell
}-2}}{3\lambda _{\ell }-2}u\phi (u).
\end{align*}%
d) The following remaining terms are identically zero:%
\begin{equation*}
\psi _{3334}(u)=\psi _{3345}(u)=\psi _{3444}(u)=\psi _{3455}(u)=\psi
_{4445}(u)=0.
\end{equation*}
\end{lemma}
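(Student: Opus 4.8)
The plan is to exploit the fact that the indicator in (\ref{psi}) depends on the five independent standard Gaussians only through the single standard combination $W:=\alpha_\ell Y_3+\beta_\ell Y_5$, and to diagonalise accordingly. First I would introduce the orthogonal standard Gaussian $V:=-\beta_\ell Y_3+\alpha_\ell Y_5$; since $\alpha_\ell^2+\beta_\ell^2=1$, the pair $(W,V)$ is again standard Gaussian with independent components, and we have the inverse relations $Y_3=\alpha_\ell W-\beta_\ell V$ and $Y_5=\beta_\ell W+\alpha_\ell V$. Crucially $Y_4$ is independent of $(Y_3,Y_5)$, hence of $(W,V)$, and the indicator reduces to $\{W\le -u\}$. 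Next I would record the one-dimensional truncated Gaussian moments $g_k(u):=\mathbb{E}[W^k\mathbb{I}_{\{W\le -u\}}]=\int_{-\infty}^{-u}w^k\phi(w)\,dw$, which follow from $\phi'(w)=-w\phi(w)$ and integration by parts; in particular $g_0=\Phi(-u)$, $g_2=\Phi(-u)+u\phi(u)$ and $g_4=3\Phi(-u)+(u^3+3u)\phi(u)$ are the only ones that will enter.

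The terms involving $Y_4$ are then immediate. Since $Y_4$ is independent of the indicator with $\mathbb{E}Y_4=\mathbb{E}Y_4^3=0$, every coefficient in which $Y_4$ appears to an odd power factorises and vanishes, which is exactly part (d); likewise $\psi_{4444}=\mathbb{E}[Y_4^4]\,\Phi(-u)=3\Phi(-u)$. The mixed coefficients in part (c) factor through $\mathbb{E}[Y_4^2]=1$ as the corresponding second moments already computed in Lemma \ref{theta}, namely $\psi_{3344}=\theta_{33}(u)$, $\psi_{3445}=\theta_{35}(u)$ and $\psi_{4455}=\theta_{55}(u)$, where $\theta_{55}(u)=\Phi(-u)+\beta_\ell^2 u\phi(u)$ is obtained exactly as the other entries of Lemma \ref{theta}.

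It remains to treat the four genuinely quartic coefficients $\psi_{3333},\psi_{3355},\psi_{3335},\psi_{3555}$, which I would handle by substituting the inverse relations, expanding the resulting homogeneous quartic in $(W,V)$, and discarding every term carrying an odd power of $V$ (these vanish by independence and $\mathbb{E}V=\mathbb{E}V^3=0$). Because the total degree is four and $V$ survives only at even powers, the accompanying power of $W$ is always even, so after using $\mathbb{E}V^2=1$ and $\mathbb{E}V^4=3$ each coefficient reduces to an explicit combination of $g_0,g_2,g_4$ whose weights are monomials in $\alpha_\ell,\beta_\ell$. Substituting $\alpha_\ell^2=\tfrac{2\lambda_\ell}{3\lambda_\ell-2}$ and $\beta_\ell^2=\tfrac{\lambda_\ell-2}{3\lambda_\ell-2}$ and collecting yields the stated formulae.

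The main obstacle is purely the algebraic bookkeeping in this last step: one must verify that the $\Phi(-u)$-contributions recombine into the clean constants displayed and that the $\phi(u)$-contributions assemble into the advertised rational functions of $\lambda_\ell$. For instance, for $\psi_{3333}$ the $\Phi(-u)$-weight is $\tfrac{1}{(3\lambda_\ell-2)^2}\bigl(12\lambda_\ell^2+12\lambda_\ell(\lambda_\ell-2)+3(\lambda_\ell-2)^2\bigr)=3$ since the numerator collapses to $3(3\lambda_\ell-2)^2$; and for the odd coefficients $\psi_{3335},\psi_{3555}$ the $\Phi(-u)$-weight vanishes identically (each surviving monomial carries a common factor $\alpha_\ell\beta_\ell$ and the bracketed $\Phi(-u)$-terms cancel), leaving a pure $\phi(u)$ contribution proportional to $\sqrt{\lambda_\ell}\sqrt{\lambda_\ell-2}$, precisely as stated. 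No idea beyond careful expansion and the two substitutions $\alpha_\ell^2,\beta_\ell^2$ is required.
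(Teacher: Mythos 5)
Your proof is correct, and it reaches every stated formula, but it does so by a genuinely different computational route than the paper. The paper works in the original coordinates: it conditions on $Y_{3}=y$, integrates $Y_{5}$ over the half-line $\{x\leq \frac{-u-\alpha _{\ell }y}{\beta _{\ell }}\}$ using explicit antiderivatives of $x^{k}\phi (x)$, and then evaluates the resulting outer integrals $\int y^{k}\phi (y)\Phi \bigl( \frac{-u-\alpha _{\ell }y}{\beta _{\ell }}\bigr) dy$ and $\int y^{k}\phi (y)\phi \bigl( \frac{-u-\alpha _{\ell }y}{\beta _{\ell }}\bigr) dy$ by repeated integration by parts; this is exactly what the auxiliary Lemmas \ref{aa1}--\ref{aa3} are for. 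You instead rotate $(Y_{3},Y_{5})$ into $(W,V)$ with $W=\alpha _{\ell }Y_{3}+\beta _{\ell }Y_{5}$ the variable appearing in the indicator, so that the truncation acts on a single standard Gaussian and every coefficient collapses to a linear combination of the univariate truncated moments $g_{0},g_{2},g_{4}$ with monomial weights in $\alpha _{\ell },\beta _{\ell }$. What your approach buys is the elimination of the two-variable convolution lemmas, a uniform treatment of all eight nontrivial coefficients from a single quartic expansion, and a transparent explanation of why the $\Phi (-u)$ contributions cancel in $\psi _{3335}$ and $\psi _{3555}$ (the surviving monomials carry the antisymmetric factor $\alpha _{\ell }\beta _{\ell }(\beta ^{2}-\alpha ^{2})$ pattern whose $g$-weights sum to zero); what the paper's route buys is that the intermediate integrals are reusable elsewhere and each $\psi $ is verified independently. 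I checked your reductions in detail — e.g.\ $\psi _{3333}=\alpha _{\ell }^{4}g_{4}+6\alpha _{\ell }^{2}\beta _{\ell }^{2}g_{2}+3\beta _{\ell }^{4}g_{0}$ and $\psi _{3355}=\alpha _{\ell }^{2}\beta _{\ell }^{2}g_{4}+(\alpha _{\ell }^{4}-4\alpha _{\ell }^{2}\beta _{\ell }^{2}+\beta _{\ell }^{4})g_{2}+3\alpha _{\ell }^{2}\beta _{\ell }^{2}g_{0}$ — and after the substitutions $\alpha _{\ell }^{2}=\frac{2\lambda _{\ell }}{3\lambda _{\ell }-2}$, $\beta _{\ell }^{2}=\frac{\lambda _{\ell }-2}{3\lambda _{\ell }-2}$ they agree with the displayed expressions; parts (c) and (d) are handled by independence of $Y_{4}$ exactly as in the paper (your $\theta _{55}$ is not listed in Lemma \ref{theta} but is the same one-line computation the paper performs inline). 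No gap.
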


\begin{proof}
Again, the proof is a rather straightforward, albeit long and tedious,
exercise in the computation of Gaussian moments and convolutions; for
notational simplicity, in the sequel we shall use $X$, $Y$ and $Z$ to denote
three independent standard Gaussian random variables. To prove a), by
applying Lemma \ref{aa3} we have 
\begin{align*}
\psi _{3333}(u)& =\mathbb{E}\left[ Y^{4}1\hspace{-0.27em}\mbox{\rm l}%
_{\left\{ \alpha _{\ell }Y+\beta _{\ell }X\leq -u\right\} }\right]
=\int_{-\infty }^{\infty }y^{4}\phi (y)\Phi \Big(\frac{-u-\alpha _{\ell }y}{%
\beta _{\ell }}\Big)dy \\
& =3\Phi (-u)+3\alpha _{\ell }^{2}u\phi (-u)+3\alpha _{\ell }^{4}\beta
_{\ell }^{2}u\phi (-u)+3\beta _{\ell }^{4}\alpha _{\ell }^{2}u\phi
(-u)+\alpha _{\ell }^{4}u^{3}\phi (-u).
\end{align*}%
Likewise, from Lemma \ref{aa1}, 
\begin{equation*}
\psi _{4444}(u)=\mathbb{E}\left[ Z^{4}1\hspace{-0.27em}\mbox{\rm l}_{\left\{
\alpha _{\ell }Y+\beta _{\ell }X\leq -u\right\} }\right] =3\,\mathbb{E}\left[
1\hspace{-0.27em}\mbox{\rm l}_{\left\{ \alpha _{\ell }Y+\beta _{\ell }X\leq
-u\right\} }\right] =3\Phi (-u).
\end{equation*}%
To prove b), we start by observing that 
\begin{equation*}
\int_{-\infty }^{q}x^{2}\phi (x)dx=\Phi (q)-q\,\phi (q),
\end{equation*}%
and from Lemma \ref{aa2}, we obtain 
\begin{align*}
\psi _{3355}(u)& =\mathbb{E}\left[ Y^{2}X^{2}1\hspace{-0.27em}\mbox{\rm l}%
_{\left\{ \alpha _{\ell }Y+\beta _{\ell }X\leq -u\right\} }\right]
=\int_{-\infty }^{\infty }y^{2}\phi (y)dy\int_{-\infty }^{\frac{-u-\alpha
_{\ell }y}{\beta _{\ell }}}x^{2}\phi (x)dx \\
& =\int_{-\infty }^{\infty }y^{2}\phi (y)\Phi \left( \frac{-u-\alpha _{\ell
}y}{\beta _{\ell }}\right) dy-\int_{-\infty }^{\infty }y^{2}\phi (y)\left( 
\frac{-u-\alpha _{\ell }y}{\beta _{\ell }}\right) \phi \left( \frac{%
-u-\alpha _{\ell }y}{\beta _{\ell }}\right) dy \\
& =\Phi (-u)+\alpha _{\ell }^{2}\,u\,\phi (-u)+\beta _{\ell
}^{2}\,u\,(-2\alpha _{\ell }^{4}+\beta _{\ell }^{4}-\alpha _{\ell }^{2}\beta
_{\ell }^{2}+\alpha _{\ell }^{2}u^{2})\phi (-u).
\end{align*}%
The proof of all remaining terms is very similar. For instance,%
\begin{align*}
\psi _{3555}(u)& =\mathbb{E}\left[ YX^{3}1\hspace{-0.27em}\mbox{\rm l}%
_{\left\{ \alpha _{\ell }Y+\beta _{\ell }X\right\} \leq -u}\right]
=\int_{-\infty }^{\infty }y\phi (y)dy\int_{-\infty }^{\frac{-u-\alpha _{\ell
}y}{\beta _{\ell }}}x^{3}\phi (x)dx \\
& =-\int_{-\infty }^{\infty }y\phi (y)\phi \left( \frac{-u-\alpha _{\ell }y}{%
\beta _{\ell }}\right) \left\{ \left( \frac{-u-\alpha _{\ell }y}{\beta
_{\ell }}\right) ^{2}+2\right\} dy \\
& =\alpha _{\ell }\beta _{\ell }u\,(3\alpha _{\ell }^{2}+\beta _{\ell
}^{2}u^{2})\phi (-u),
\end{align*}%
and likewise%
\begin{equation*}
\psi _{3335}(u)=\mathbb{E}\left[ Y^{3}X1\hspace{-0.27em}\mbox{\rm l}%
_{\left\{ \alpha _{\ell }Y+\beta _{\ell }X\leq -u\right\} }\right] =\alpha
_{\ell }\beta _{\ell }u\,(3\beta _{\ell }^{2}+\alpha _{\ell }^{2}u^{2})\phi
(-u).
\end{equation*}%
To prove c) it is enough to note that%
\begin{equation*}
\psi _{3344}(u)=\mathbb{E}\left[ Z^{2}Y^{2}1\hspace{-0.27em}\mbox{\rm l}%
_{\left\{ \alpha _{\ell }Y+\beta _{\ell }X\leq -u\right\} }\right] =\mathbb{E%
}\left[ Y^{2}1\hspace{-0.27em}\mbox{\rm l}_{\left\{ \alpha _{\ell }Y+\beta
_{\ell }X\leq -u\right\} }\right] =\theta _{33}(u),
\end{equation*}%
\begin{align*}
\psi _{4455}(u)& =\mathbb{E}\left[ Z^{2}X^{2}1\hspace{-0.27em}\mbox{\rm l}%
_{\left\{ \alpha _{\ell }Y+\beta _{\ell }X\leq -u\right\} }\right] =\mathbb{E%
}\left[ X^{2}1\hspace{-0.27em}\mbox{\rm l}_{\left\{ \alpha _{\ell }Y+\beta
_{\ell }X\leq -u\right\} }\right] =\theta _{55}(u) \\
& =\Phi (-u)+\beta _{\ell }^{2}u\phi (-u),
\end{align*}%
and 
\begin{equation*}
\psi _{3445}(u)=\mathbb{E}\left[ Z^{2}XY1\hspace{-0.27em}\mbox{\rm l}%
_{\left\{ \alpha _{\ell }Y+\beta _{\ell }X\leq -u\right\} }\right] =\mathbb{E%
}\left[ XY1\hspace{-0.27em}\mbox{\rm l}_{\left\{ \alpha _{\ell }Y+\beta
_{\ell }X\leq -u\right\} }\right] =\theta _{35}(u).
\end{equation*}%
To prove d), i.e., the fact that $\psi _{3334}(u)$, $\psi _{3345}(u)$, $\psi
_{3444}(u)$, $\psi _{3455}(u)$ and $\psi _{4445}(u)$ are identically equal
to zero, it is enough to note that they are all of the form 
\begin{equation*}
\mathbb{E}\left[ Z^{p}X^{q}Y^{r}1\hspace{-0.27em}\mbox{\rm l}_{\left\{
\alpha _{\ell }Y+\beta _{\ell }X\leq -u\right\} }\right]
\end{equation*}%
where $p=1,3$ is odd.
\end{proof}

Some auxiliary computations which we exploited in the proof are collected in
Lemmas \ref{aa1}-\ref{aa3} below.

\begin{lemma}
\label{aa1} For all values of $\alpha _{\ell }$, $\beta _{\ell }$ and $u$,
the following identity holds: 
\begin{equation*}
\int_{-\infty }^{\infty }\phi (y)\Phi \left( \frac{-u-\alpha _{\ell }y}{%
\beta _{\ell }}\right) dy=\Phi (-u).
\end{equation*}
\end{lemma}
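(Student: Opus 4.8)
The plan is to prove the identity by a probabilistic interpretation of the integral, which reduces it to the evaluation of a single tail probability for a standard Gaussian variable. The only structural facts I would invoke are the constraint $\alpha_{\ell}^{2}+\beta_{\ell}^{2}=1$ recorded earlier in the paper and the fact that $\beta_{\ell}>0$ (so that division by $\beta_{\ell}$ preserves inequalities).

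First I would introduce two independent standard Gaussian random variables $X$ and $Y$, so that the left-hand side is an expectation over the law of $Y$,
\[
\int_{-\infty }^{\infty }\phi (y)\Phi \left( \frac{-u-\alpha _{\ell }y}{\beta _{\ell }}\right) dy=\mathbb{E}\left[ \Phi \left( \frac{-u-\alpha _{\ell }Y}{\beta _{\ell }}\right) \right].
\]
The key step is then to recognize the inner distribution function as a conditional probability involving $X$: since $X\sim N(0,1)$ is independent of $Y$ and $\beta_{\ell}>0$, for each fixed value of $Y$ we have
\[
\Phi \left( \frac{-u-\alpha _{\ell }Y}{\beta _{\ell }}\right) =\mathbb{P}\left( X\leq \frac{-u-\alpha _{\ell }Y}{\beta _{\ell }}\,\Big|\,Y\right) =\mathbb{P}\left( \alpha _{\ell }Y+\beta _{\ell }X\leq -u\,\Big|\,Y\right).
\]

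To finish, I would take expectations and use the tower property, obtaining $\mathbb{E}\left[ \Phi \left( (-u-\alpha_{\ell}Y)/\beta_{\ell}\right) \right]=\mathbb{P}\left( \alpha_{\ell}Y+\beta_{\ell}X\leq -u\right)$, and then observe that $\alpha_{\ell}Y+\beta_{\ell}X$, being a linear combination of two independent centred Gaussians, is itself centred Gaussian with variance $\alpha_{\ell}^{2}+\beta_{\ell}^{2}=1$. Hence $\alpha_{\ell}Y+\beta_{\ell}X\sim N(0,1)$ and the probability equals $\Phi(-u)$, as claimed.

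There is essentially no genuine obstacle here; the result is a classical Gaussian identity and the argument is short. The only two points deserving a line of care are that the positivity of $\beta_{\ell}$ is what allows the event $\{\beta_{\ell}X\leq -u-\alpha_{\ell}Y\}$ to be rewritten without reversing the inequality, and that the normalization $\alpha_{\ell}^{2}+\beta_{\ell}^{2}=1$ is precisely what forces the combined variable to have unit variance so that $\Phi$ (rather than some rescaled Gaussian c.d.f.) appears in the answer. A purely analytic alternative — differentiating in $u$ and completing the square in the resulting product of two Gaussian densities — reaches the same conclusion but is more laborious, so the probabilistic route is to be preferred.
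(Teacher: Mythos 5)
Your proof is correct, and it takes a genuinely different (and arguably more self-contained) route than the paper's. The paper writes $\Phi(x)=\tfrac12+\tfrac12\,\mathrm{erf}(x/\sqrt2)$ and then invokes, without proof, the classical identity
\begin{equation*}
\int_{-\infty}^{\infty}\phi(y)\,\mathrm{erf}\left(\frac{-u-\alpha_{\ell}y}{\sqrt{2}\,\beta_{\ell}}\right)dy=\mathrm{erf}\left(\frac{-u}{\sqrt{2}}\right),
\end{equation*}
using $\alpha_{\ell}^{2}+\beta_{\ell}^{2}=1$; so its argument is an analytic table-lookup. You instead read the integrand as $\mathbb{P}(\alpha_{\ell}Y+\beta_{\ell}X\leq -u\,|\,Y)$ for independent standard Gaussians $X,Y$, apply the tower property, and use the stability of the Gaussian family to conclude $\alpha_{\ell}Y+\beta_{\ell}X\sim N(0,1)$. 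The two arguments are of course two faces of the same convolution fact, but yours proves the identity from first principles rather than citing it, and it makes transparent exactly where the normalization $\alpha_{\ell}^{2}+\beta_{\ell}^{2}=1$ enters. Your remark about the sign of $\beta_{\ell}$ is well taken and is in fact a point the lemma's blanket phrase ``for all values of $\alpha_{\ell}$, $\beta_{\ell}$'' glosses over: if $\beta_{\ell}<0$ the integral equals $\Phi(u)$ rather than $\Phi(-u)$, so the positivity of $\beta_{\ell}=\lambda_{5}/\lambda_{\ell}$ (guaranteed by the Cholesky construction) is genuinely needed in both proofs.
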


\begin{proof}
Recall first that 
\begin{equation*}
\Phi (x)=\frac{1}{2}+\frac{1}{2}\mathrm{erf}\left( \frac{x}{\sqrt{2}}%
\right), \hspace{1cm}\mathrm{erf}(q)=\frac{2}{\sqrt{\pi }}%
\int_{0}^{q}e^{-t^{2}}dt,
\end{equation*}%
whence 
\begin{align*}
\int_{-\infty }^{\infty }\phi (y)\Phi \left( \frac{-u-\alpha _{\ell }y}{%
\beta _{\ell }}\right) dy&=\frac{1}{2}+\frac{1}{2}\int_{-\infty }^{\infty
}\phi (y)\mathrm{erf}\left( \frac{-u-\alpha _{\ell }y}{\sqrt{2}\beta _{\ell }%
}\right) dy \\
&=\frac{1}{2}+\frac{1}{2}\mathrm{erf}\left( \frac{-u}{\sqrt{2}}\right) =\Phi
(-u),
\end{align*}%
by recalling that $\alpha _{\ell }^{2}+\beta _{\ell }^{2}=1$.
\end{proof}

\begin{lemma}
\label{aa2}For all values of $\alpha _{\ell }$, $\beta _{\ell }$ and $u$, we
have that: 
\begin{equation*}
\int_{-\infty }^{\infty }y^{2}\phi (y)\Phi \left( \frac{-u-\alpha _{\ell }y}{%
\beta _{\ell }}\right) dy=\Phi (-u)+\alpha _{\ell }^{2}\,u\,\phi (-u).
\end{equation*}
\end{lemma}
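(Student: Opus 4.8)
The plan is to recast the integral probabilistically and exploit conditioning, which avoids any direct evaluation of a Gaussian product integral. First I would introduce two independent standard Gaussian variables $X$ and $Y$ and observe that $\Phi\big(\frac{-u-\alpha_\ell y}{\beta_\ell}\big)=\mathbb{P}\big(\beta_\ell X\le -u-\alpha_\ell y\big)$, so that the left-hand side equals $\mathbb{E}\big[Y^2\,\mathbb{I}_{\{\alpha_\ell Y+\beta_\ell X\le -u\}}\big]$. This is precisely the form in which the coefficient $\theta_{33}$ was introduced, so the interpretation is consistent with the way the $\theta$ and $\psi$ quantities are computed elsewhere.

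Next I would linearise the event by setting $W=\alpha_\ell Y+\beta_\ell X$. Since $\alpha_\ell^2+\beta_\ell^2=1$, the variable $W$ is again standard Gaussian, and the pair $(Y,W)$ is jointly centred Gaussian with unit variances and covariance $\mathbb{E}[YW]=\alpha_\ell$. The key simplification is then to condition on $W$: by the standard Gaussian regression formula, $Y\mid\{W=w\}$ is Gaussian with mean $\alpha_\ell w$ and variance $1-\alpha_\ell^2=\beta_\ell^2$, so that $\mathbb{E}[Y^2\mid W=w]=\alpha_\ell^2 w^2+\beta_\ell^2$. Inserting this and integrating over the event $\{W\le -u\}$ turns the awkward composite argument of $\Phi$ into a clean truncation and yields $\int_{-\infty}^{-u}(\alpha_\ell^2 w^2+\beta_\ell^2)\phi(w)\,dw$.

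Finally I would evaluate the two resulting elementary integrals. The mass term gives $\int_{-\infty}^{-u}\phi(w)\,dw=\Phi(-u)$, while for the second moment I would use the truncated-second-moment identity $\int_{-\infty}^{q}w^2\phi(w)\,dw=\Phi(q)-q\phi(q)$ (already invoked in the computation of the $\psi$ coefficients) specialised at $q=-u$, giving $\Phi(-u)+u\phi(-u)$. Collecting terms and using $\alpha_\ell^2+\beta_\ell^2=1$ produces $(\alpha_\ell^2+\beta_\ell^2)\Phi(-u)+\alpha_\ell^2 u\phi(-u)=\Phi(-u)+\alpha_\ell^2 u\phi(-u)$, which is the claim.

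Since every step is routine, there is no serious obstacle; the only thing to get right is the conditional computation, i.e.\ recognising that the rotation to $W$ trivialises the indicator and correctly reading off the conditional mean and variance of $Y$ given $W$. An alternative purely analytic route—writing $y^2\phi(y)=\phi(y)+\frac{d}{dy}[-y\phi(y)]$, integrating by parts (the boundary terms vanish because $y\phi(y)\to 0$), and then evaluating $\int y\phi(y)\phi\big(\frac{-u-\alpha_\ell y}{\beta_\ell}\big)\,dy$ by completing the square—reaches the same answer but is more cumbersome, so I would prefer the conditioning argument.
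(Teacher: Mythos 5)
Your proof is correct, but it takes a genuinely different route from the paper's. The paper proves this lemma analytically: it writes $y^{2}\phi(y)=-y\frac{d}{dy}\phi(y)$, integrates by parts to get $\int\phi(y)\frac{d}{dy}\{y\,\Phi(\tfrac{-u-\alpha_{\ell}y}{\beta_{\ell}})\}\,dy$, invokes Lemma \ref{aa1} for the first resulting term, and then evaluates the Gaussian product integral $\int y\,\phi(y)\,\phi(\tfrac{-u-\alpha_{\ell}y}{\beta_{\ell}})\,dy=-\alpha_{\ell}\beta_{\ell}u\,\phi(-u)$ (stated without detail, and requiring a completion of the square). Your conditioning argument instead rotates to $W=\alpha_{\ell}Y+\beta_{\ell}X$, uses the Gaussian regression formula to get $\mathbb{E}[Y^{2}\mid W=w]=\alpha_{\ell}^{2}w^{2}+\beta_{\ell}^{2}$, and reduces everything to truncated first and second moments of a single standard Gaussian. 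What your approach buys is that it bypasses the unproved convolution identity entirely, subsumes Lemma \ref{aa1} as the zeroth-moment case, and scales cleanly to the fourth-moment computations of Lemma \ref{aa3} and the $\psi_{abcd}$ coefficients (where $\mathbb{E}[Y^{4}\mid W=w]=\alpha_{\ell}^{4}w^{4}+6\alpha_{\ell}^{2}\beta_{\ell}^{2}w^{2}+3\beta_{\ell}^{4}$ would do all the work at once); the paper's integration by parts is more self-contained at the level of single integrals but must be redone term by term. Two small caveats, which apply equally to the paper's argument: the identity uses $\alpha_{\ell}^{2}+\beta_{\ell}^{2}=1$ (so the ``for all values of $\alpha_{\ell}$, $\beta_{\ell}$'' in the statement is really ``for all normalised pairs''), and your probabilistic reading of $\Phi(\tfrac{-u-\alpha_{\ell}y}{\beta_{\ell}})$ as $\mathbb{P}(\beta_{\ell}X\leq -u-\alpha_{\ell}y)$ tacitly requires $\beta_{\ell}>0$, which holds here since $\beta_{\ell}=\lambda_{5}/\lambda_{\ell}>0$.
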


\begin{proof}
Note that 
\begin{equation}  \label{cinque}
y^{2}\phi (y)\Phi \left( \frac{-u-\alpha _{\ell }y}{\beta _{\ell }}\right)
=-y\left( \frac{d}{dy}\phi (y)\right) \Phi \left( \frac{-u-\alpha _{\ell }y}{%
\beta _{\ell }}\right),
\end{equation}
integrating by parts we have 
\begin{equation*}
\int_{-\infty }^{\infty }y^{2}\phi (y)\Phi \left( \frac{-u-\alpha _{\ell }y}{%
\beta _{\ell }}\right) dy=\int_{-\infty }^{\infty }\phi (y)\frac{d}{dy}%
\left\{ y\;\Phi \left( \frac{-u-\alpha _{\ell }y}{\beta _{\ell }}\right)
\right\} dy
\end{equation*}%
and since 
\begin{equation}  \label{unalabel}
\frac{d}{dy}\Phi \left( \frac{-u-\alpha _{\ell }y}{\beta _{\ell }}\right) =-%
\frac{\alpha _{\ell }}{\beta _{\ell }}\phi \left( \frac{-u-\alpha _{\ell }y}{%
\beta _{\ell }}\right)
\end{equation}
we obtain form Lemma \ref{aa1} and \ref{unalabel} that 
\begin{equation*}
\int_{-\infty }^{\infty }\phi (y)\frac{d}{dy}\left\{ y\;\Phi \left( \frac{%
-u-\alpha _{\ell }y}{\beta _{\ell }}\right) \right\} dy=\Phi (-u)-\frac{%
\alpha _{\ell }}{\beta _{\ell }}\int_{-\infty }^{\infty }y\,\phi (y)\,\phi
\left( \frac{-u-\alpha _{\ell }y}{\beta _{\ell }}\right) dy.
\end{equation*}%
The statement follows by observing that 
\begin{equation*}
\int_{-\infty }^{\infty }y\,\phi (y)\,\phi \left( \frac{-u-\alpha _{\ell }y}{%
\beta _{\ell }}\right) dy=-\alpha _{\ell }\beta _{\ell }u\,\phi (-u).
\end{equation*}
\end{proof}

\begin{lemma}
\label{aa3}For all values of $\alpha _{\ell }$, $\beta _{\ell }$ and $u$, it
holds that: 
\begin{align*}
&\int_{-\infty }^{\infty }y^{4}\phi (y)\,\Phi \left( \frac{-u-\alpha _{\ell
}y}{\beta _{\ell }}\right) dy \\
&=3\Phi (-u)+3\alpha _{\ell }^{2}u\phi (-u)+3\alpha _{\ell
}^{4}\beta_{\ell}^{2}u\phi (-u)+3\beta_{\ell}^{4}\alpha _{\ell }^{2}u\phi
(-u)+\alpha _{\ell }^{4}u^{3}\phi (-u).
\end{align*}
\end{lemma}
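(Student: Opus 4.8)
The plan is to follow exactly the integration-by-parts strategy already used for Lemmas \ref{aa1} and \ref{aa2}, reducing the quartic integral to the quadratic one computed in Lemma \ref{aa2}. The starting observation is that $y^{4}\phi (y)=-y^{3}\phi ^{\prime }(y)$, so that integrating by parts gives
\[
\int_{-\infty }^{\infty }y^{4}\phi (y)\,\Phi \!\left( \tfrac{-u-\alpha _{\ell }y}{\beta _{\ell }}\right) dy=\int_{-\infty }^{\infty }\phi (y)\,\frac{d}{dy}\!\left\{ y^{3}\,\Phi \!\left( \tfrac{-u-\alpha _{\ell }y}{\beta _{\ell }}\right) \right\} dy.
\]
Carrying out the differentiation and inserting identity (\ref{unalabel}) splits the right-hand side into two pieces: a term $3\int y^{2}\phi (y)\Phi (\cdots )\,dy$, which is exactly three times the integral evaluated in Lemma \ref{aa2} and hence equals $3\Phi (-u)+3\alpha _{\ell }^{2}u\phi (-u)$, and a cross term $-\tfrac{\alpha _{\ell }}{\beta _{\ell }}\int y^{3}\phi (y)\phi (\cdots )\,dy$.

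Next I would evaluate this cross term. The key computation is to recognise that the product of the two Gaussian factors collapses, after completing the square and using $\alpha _{\ell }^{2}+\beta _{\ell }^{2}=1$, to
\[
\phi (y)\,\phi \!\left( \tfrac{-u-\alpha _{\ell }y}{\beta _{\ell }}\right) =\phi (u)\,\phi \!\left( \tfrac{y+u\alpha _{\ell }}{\beta _{\ell }}\right) .
\]
The linear substitution $w=(y+u\alpha _{\ell })/\beta _{\ell }$ then turns $\int y^{3}\phi (y)\phi (\cdots )\,dy$ into $\phi (u)\beta _{\ell }\int (\beta _{\ell }w-u\alpha _{\ell })^{3}\phi (w)\,dw$, a genuine standard-Gaussian moment. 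Expanding the cube and using that $\int w^{k}\phi (w)\,dw$ equals $1,0,1,0$ for $k=0,1,2,3$ leaves $-\phi (-u)\,\alpha _{\ell }\beta _{\ell }u\,(3\beta _{\ell }^{2}+\alpha _{\ell }^{2}u^{2})$, so the cross term contributes $3\alpha _{\ell }^{2}\beta _{\ell }^{2}u\phi (-u)+\alpha _{\ell }^{4}u^{3}\phi (-u)$.

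Finally I would collect the two contributions, obtaining $3\Phi (-u)+3\alpha _{\ell }^{2}u\phi (-u)+3\alpha _{\ell }^{2}\beta _{\ell }^{2}u\phi (-u)+\alpha _{\ell }^{4}u^{3}\phi (-u)$, which coincides with the stated right-hand side once one notes the trivial identity $3\alpha _{\ell }^{2}\beta _{\ell }^{2}=3\alpha _{\ell }^{4}\beta _{\ell }^{2}+3\beta _{\ell }^{4}\alpha _{\ell }^{2}$, again a consequence of $\alpha _{\ell }^{2}+\beta _{\ell }^{2}=1$ (the statement simply records the unsimplified form). There is no genuine obstacle here: the whole argument is routine Gaussian bookkeeping, and the only thing to watch is the coefficient arithmetic in the cubic expansion. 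As a cleaner alternative worth mentioning, the same identity follows from the probabilistic reading of the integral as $\mathbb{E}[Y^{4}\mathbb{I}_{\{\alpha _{\ell }Y+\beta _{\ell }X\leq -u\}}]$ for independent standard Gaussians $X,Y$: conditioning on $W=\alpha _{\ell }Y+\beta _{\ell }X\sim N(0,1)$, for which $Y\mid W=w\sim N(\alpha _{\ell }w,\beta _{\ell }^{2})$, reduces the problem to the truncated moments $\int_{-\infty }^{-u}w^{k}\phi (w)\,dw$ for $k=0,2,4$, and the factor $3(\alpha _{\ell }^{2}+\beta _{\ell }^{2})^{2}=3$ in front of $\Phi (-u)$ then emerges immediately.
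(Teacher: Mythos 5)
Your proof is correct and follows the same route as the paper: the identity $y^{4}\phi(y)=-y^{3}\phi'(y)$, integration by parts, reduction of the first piece to Lemma \ref{aa2}, and evaluation of the cross term $-\tfrac{\alpha_{\ell}}{\beta_{\ell}}\int y^{3}\phi(y)\phi(\cdot)\,dy$, whose value the paper states without detail and which you correctly work out via the Gaussian product collapse, recovering $\alpha_{\ell}^{2}u(3\beta_{\ell}^{2}+\alpha_{\ell}^{2}u^{2})\phi(-u)$, which matches the paper's $\alpha_{\ell}^{2}u(3\beta_{\ell}^{4}+3\alpha_{\ell}^{2}\beta_{\ell}^{2}+\alpha_{\ell}^{2}u^{2})\phi(-u)$ since $\alpha_{\ell}^{2}+\beta_{\ell}^{2}=1$. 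The coefficient bookkeeping and the final reconciliation with the unsimplified statement are all accurate.
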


\begin{proof}
As in (\ref{cinque}) we write 
\begin{equation*}
y^{4}\phi (y)\,\Phi \left( \frac{-u-\alpha _{\ell }y}{\beta _{\ell }}\right)
=-y^{3}\left( \frac{d}{dy}\phi (y)\right) \Phi \left( \frac{-u-\alpha _{\ell
}y}{\beta _{\ell }}\right) ,
\end{equation*}%
so that integrating by parts we obtain 
\begin{align*}
& \int_{-\infty }^{\infty }y^{4}\phi (y)\,\Phi \left( \frac{-u-\alpha _{\ell
}y}{\beta _{\ell }}\right) dy=\int_{-\infty }^{\infty }\phi (y)\,\frac{d}{dy}%
\left\{ y^{3}\Phi \left( \frac{-u-\alpha _{\ell }y}{\beta _{\ell }}\right)
\right\} dy \\
& =3\int_{-\infty }^{\infty }y^{2}\;\phi (y)\,\Phi \left( \frac{-u-\alpha
_{\ell }y}{\beta _{\ell }}\right) dy+\int_{-\infty }^{\infty }y^{3}\,\phi
(y)\,\frac{d}{dy}\Phi \left( \frac{-u-\alpha _{\ell }y}{\beta _{\ell }}%
\right) dy,
\end{align*}%
the statement follows immediately by applying Lemma \ref{aa2} and by
observing that 
\begin{align*}
\int_{-\infty }^{\infty }y^{3}\,\phi (y)\,\frac{d}{dy}\Phi \left( \frac{%
-u-\alpha _{\ell }y}{\beta _{\ell }}\right) dy& =-\frac{\alpha _{\ell }}{%
\beta _{\ell }}\int_{-\infty }^{\infty }y^{3}\,\phi (y)\,\phi \left( \frac{%
-u-\alpha _{\ell }y}{\beta _{\ell }}\right) dy \\
& =\alpha _{\ell }^{2}\,u\,(3\beta _{\ell }^{4}+3\alpha _{\ell }^{2}\beta
_{\ell }^{2}+\alpha _{\ell }^{2}u^{2})\phi (-u).
\end{align*}%
\textbf{End of the proof of Proposition \ref{hk}.} We are now in the
position to complete the proof of the Proposition. First note that, in view
of Lemma \ref{varphi}, we immediately have $h_{1j}(u;\ell )=0$ for all $j>1$
and $h_{2j}(u;\ell )=0$ for all $j>2$ since $\varphi _{1}(\ell )=0$.
Moreover, some standard algebraic computations yield

\begin{equation*}
h_{34}(u;\ell )=[\lambda _{3}\lambda _{5}\;\psi _{3345}(u)+\lambda
_{2}\lambda _{3}\;\psi _{3334}(u)-\lambda _{4}^{2}\;\psi _{3444}(u)]\varphi
_{0}^{2}(\ell )=0
\end{equation*}

\begin{align*}
h_{35}(u;\ell )& =[\lambda _{3}\lambda _{5}\;\psi _{3355}(u)+\lambda
_{2}\lambda _{3}\;\psi _{3335}(u)-\lambda _{4}^{2}\;\psi _{3445}(u)]\varphi
_{0}^{2}(\ell ) \\
& =\sqrt{\lambda _{\ell }}\sqrt{\lambda _{\ell }-2}\frac{\Phi (-u)(3\lambda
_{\ell }-2)+u\phi (u)[2+\lambda _{\ell }(u^{2}+1)]}{2\sqrt{2}\pi (3\lambda
_{\ell }-2)},
\end{align*}

\begin{equation*}
h_{45}(u;\ell )=[\lambda _{3}\lambda _{5}\;\psi _{3455}(u)+\lambda
_{2}\lambda _{3}\;\psi _{3345}(u)-\lambda _{4}^{2}\;\psi _{4445}(u)]\varphi
_{0}^{2}(\ell )=0.
\end{equation*}%
The first part of the Proposition is hence proved. For the second part, we
can argue similarly and obtain%
\begin{align*}
k_{1}(u;\ell )& =k_{2}(u;\ell )=[\lambda _{3}\lambda _{5}\;\theta
_{35}(u)+\lambda _{2}\lambda _{3}\;\theta _{33}(u)-\lambda _{4}^{2}\;\theta
_{44}(u)]\varphi (0)\varphi (2) \\
& =-\frac{2\Phi (-u)+\lambda _{\ell }u\phi (u)}{4\pi },
\end{align*}

\begin{align*}
k_{3}(u;\ell )& =[\lambda _{3}\lambda _{5}\;\psi _{3335}(u)+\lambda
_{2}\lambda _{3}\;\psi _{3333}(u)-\lambda _{4}^{2}\;\psi _{3344}(u)]\varphi
^{2}(0) \\
& \;\;-[\lambda _{3}\lambda _{5}\;\theta _{35}(u)+\lambda _{2}\lambda
_{3}\;\theta _{33}(u)-\lambda _{4}^{2}\;\theta _{44}(u)]\varphi ^{2}(0) \\
& =\frac{\Phi (-u)(\lambda _{\ell }+4)(3\lambda _{\ell }-2)+8\lambda _{\ell
}(\lambda _{\ell }(u^{2}+1)+2)u\phi (u)}{4\pi (3\lambda _{\ell }-2)}, \\
& =\Phi (-u)\frac{\lambda _{\ell }+4}{4\pi }+\lambda _{\ell }\frac{\lambda
_{\ell }(2u^{2}+5)+2}{4\pi (3\lambda _{\ell }-2)}u\phi (u),
\end{align*}

\begin{align*}
k_{4}(u;\ell )& =[\lambda _{3}\lambda _{5}\;\psi _{3445}(u)+\lambda
_{2}\lambda _{3}\;\psi _{3344}(u)-\lambda _{4}^{2}\;\psi _{4444}(u)]\varphi
^{2}(0) \\
& \;\;-[\lambda _{3}\lambda _{5}\;\theta _{35}(u)+\lambda _{2}\lambda
_{3}\;\theta _{33}(u)-\lambda _{4}^{2}\;\theta _{44}(u)]\varphi ^{2}(0) \\
& =-\Phi (-u)\frac{\lambda _{\ell }-2}{4\pi },
\end{align*}%
and finally

\begin{align*}
k_{5}(u;\ell )& =[\lambda _{3}\lambda _{5}\;\psi _{3555}(u)+\lambda
_{2}\lambda _{3}\;\psi _{3355}(u)-\lambda _{4}^{2}\;\psi _{4455}(u)]\varphi
^{2}(0) \\
& \;\;-[\lambda _{3}\lambda _{5}\;\theta _{35}(u)+\lambda _{2}\lambda
_{3}\;\theta _{33}(u)-\lambda _{4}^{2}\;\theta _{44}(u)]\varphi ^{2}(0) \\
& =(\lambda _{\ell }-2)\frac{\lambda _{\ell }(u^{2}+1)+2}{4\pi (3\lambda
_{\ell }-2)}u\phi (u).
\end{align*}
\end{proof}

\section{Appendix B: Proof of Proposition \protect\ref{AB}}

\label{proofAB}

We need first to introduce some more notation concerning the integrals of
products of random eigenfunction and/or their derivatives. As before, we
denote by $e_{a}^{x}$, $a=1,2$, the covariant derivative at $x\in \mathbb{S}%
^{2}$ with respect to the first or second variable $\theta $, $\varphi $. We
have to deal with the following integrals of squares:%
\begin{equation}
I_{00}(\ell )=\int_{\mathbb{S}^{2}}f_{\ell }^{2}(x)dx,\text{ }I_{11}(\ell
)=\int_{\mathbb{S}^{2}}\left\{ e_{1}^{x}f_{\ell }(x)\right\} ^{2}dx,\text{ }%
I_{22}(\ell )=\int_{\mathbb{S}^{2}}\left\{ e_{2}^{x}f_{\ell }(x)\right\}
^{2}dx;  \label{3marzo}
\end{equation}%
we shall also study the cross-product integral%
\begin{equation*}
I_{0,22}(\ell )=\int_{\mathbb{S}^{2}}f_{\ell }(x)e_{2}^{x}e_{2}^{x}f_{\ell
}(x)dx,
\end{equation*}%
and finally we shall consider%
\begin{equation*}
I_{12,12}(\ell )=\int_{\mathbb{S}^{2}}\left\{ e_{1}^{x}e_{2}^{x}f_{\ell
}(x)\right\} ^{2}dx,\text{ }I_{22,22}(\ell )=\int_{\mathbb{S}%
^{2}}\{e_{2}^{x}e_{2}^{x}f_{\ell }(x)\}^{2}dx.
\end{equation*}%
Let us now show how the analysis of these 6 integrals will suffice for our
needs. First note that, since 
\begin{equation*}
Y_{5}(x)=\frac{1}{\lambda _{5}}\left\{ e_{2}^{x}e_{2}^{x}f_{\ell }(x)-\frac{%
\lambda _{2}}{\lambda _{3}}e_{1}^{x}e_{1}^{x}f_{\ell }(x)\right\}
\end{equation*}%
and 
\begin{equation*}
e_{1}^{x}e_{1}^{x}f_{\ell }(x)=-\lambda _{\ell }f_{\ell
}(x)-e_{2}^{x}e_{2}^{x}f_{\ell }(x);
\end{equation*}%
we have 
\begin{align*}
A_{35}& =\frac{1}{\lambda _{3}\lambda _{5}}\int_{\mathbb{S}%
^{2}}e_{1}^{x}e_{1}^{x}f_{\ell }(x)\left\{ e_{2}^{x}e_{2}^{x}f_{\ell }(x)-%
\frac{\lambda _{2}}{\lambda _{3}}e_{1}^{x}e_{1}^{x}f_{\ell }(x)\right\} dx \\
& =-\frac{\lambda _{\ell }}{\lambda _{3}\lambda _{5}}\left\{ 1+2\frac{%
\lambda _{2}}{\lambda _{3}}\right\} I_{0,22}(\ell )-\frac{\lambda _{\ell
}^{2}\lambda _{2}}{\lambda _{3}^{2}\lambda _{5}}I_{00}(\ell )-\frac{1}{%
\lambda _{3}\lambda _{5}}\left\{ 1+\frac{\lambda _{2}}{\lambda _{3}}\right\}
I_{22,22}(\ell ).
\end{align*}%
Likewise%
\begin{eqnarray*}
B_{1} &=&\int_{\mathbb{S}^{2}}H_{2}\left( \frac{e_{1}^{x}f_{\ell }(x)}{%
\lambda _{1}}\right) dx=\frac{1}{\lambda _{1}^{2}}I_{11}(\ell )-4\pi \\
B_{2} &=&\int_{\mathbb{S}^{2}}H_{2}\left( \frac{e_{2}^{x}f_{\ell }(x)}{%
\lambda _{1}}\right) dx=\frac{1}{\lambda _{1}^{2}}I_{22}(\ell )-4\pi ,
\end{eqnarray*}%
so that these terms only require the investigation of integrals in (\ref%
{3marzo}). Finally, for the remaining terms it suffices to note that%
\begin{align*}
B_{3}& =\int_{\mathbb{S}^{2}}H_{2}\left( \frac{e_{1}^{x}e_{1}^{x}f_{\ell }(x)%
}{\lambda _{3}}\right) dx=\frac{\lambda _{\ell }^{2}}{\lambda _{3}^{2}}%
I_{00}(\ell )+\frac{1}{\lambda _{3}^{2}}I_{22,22}(\ell )+\frac{2\lambda
_{\ell }}{\lambda _{3}^{2}}I_{0,22}(\ell )-4\pi \\
B_{4}& =\int_{\mathbb{S}^{2}}H_{2}\left( \frac{e_{1}^{x}e_{2}^{x}f_{\ell }(x)%
}{\lambda _{4}}\right) dx=\frac{1}{\lambda _{4}^{2}}I_{12,12}(\ell )-4\pi
\end{align*}%
and 
\begin{align*}
B_{5}& =\int_{\mathbb{S}^{2}}H_{2}\left( \frac{1}{\lambda _{5}}%
e_{2}^{x}e_{2}^{x}f_{\ell }(x)-\frac{\lambda _{2}}{\lambda _{3}\lambda _{5}}%
e_{1}^{x}e_{1}^{x}f_{\ell }(x)\right) dx \\
& =\frac{1}{\lambda _{5}^{2}}\big(1+\frac{\lambda _{2}}{\lambda _{3}}\big)%
^{2}I_{22,22}(\ell )+\frac{\lambda _{\ell }^{2}\lambda _{2}^{2}}{\lambda
_{3}^{2}\lambda _{5}^{2}}I_{00}(\ell )+2\frac{\lambda _{\ell }\lambda _{2}}{%
\lambda _{3}\lambda _{5}^{2}}\big(1+\frac{\lambda _{2}}{\lambda _{3}}\big)%
I_{0,22}(\ell )-4\pi .
\end{align*}%
A crucial step in our argument is the possibility to write these integrals
explicitly in terms of the spherical harmonic coefficients $\left\{ a_{\ell
m}\right\} $. This task is accomplished in the following Lemma.

\begin{lemma}
a) For the integrals of square terms, we have that 
\begin{equation*}
I_{00}(\ell )= \frac{1}{2\ell +1}a_{\ell 0}^{2}+\frac{2}{2\ell +1}%
\sum_{m>0}|a_{\ell m}|^{2}=\frac{1}{2\ell +1}\sum_{m=-\ell }^{\ell }|a_{\ell
m}|^{2};
\end{equation*}%
\begin{equation*}
I_{11}(\ell ) =a_{\ell 0}^{2}\frac{\lambda_{\ell} }{2\ell +1}%
+\sum_{m>0}|a_{\ell m}|^{2}\left\{ 2\frac{\lambda _{\ell }}{2\ell +1}
-m\right\} =\sum_{m=-\ell }^{\ell }|a_{\ell m}|^{2}\left\{ \frac{\lambda
_{\ell }}{2\ell +1}-\frac{m}{2}\right\},
\end{equation*}
and 
\begin{equation*}
I_{22}(\ell )=\frac{1}{2}\sum_{m=-\ell }^{\ell }|a_{\ell m}|^{2}m.
\end{equation*}
b) For the cross-product integral, we have that 
\begin{equation*}
I_{0,22}(\ell )=-a_{\ell 0}^{2}\frac{\ell }{2\ell +1} +\sum_{m>0}|a_{\ell
m}|^{2}(\frac{1}{2\ell +1}-m).
\end{equation*}
c) Finally for the remaining terms 
\begin{equation*}
I_{12,12}(\ell ) =\sum_{m>0}|a_{\ell m}|^{2}m\left\{ \frac{ \lambda -1-m^{2}%
}{2}\right\} =\sum_{m=-\ell }^{\ell }|a_{\ell m}|^{2}m\left\{ \frac{ \lambda
-1-m^{2}}{4}\right\},
\end{equation*}
and 
\begin{equation*}
I_{22,22}(\ell )=\frac{a_{\ell 0}^{2}}{2}\left( \ell ^{2}-\frac{\ell }{2\ell
+1}\right) +\frac{1}{2}\sum_{m>0}|a_{\ell m}|^{2}\left\{ -\frac{4\lambda }{
2\ell +1}+m+\lambda m+m^{3}\right\}.
\end{equation*}
\end{lemma}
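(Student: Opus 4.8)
The plan is to reduce every one of the six integrals to a one-dimensional integral over the colatitude $\theta$ involving associated Legendre functions, by exploiting the separation-of-variables structure $Y_{\ell m}(\theta,\varphi)=c_{\ell m}P_\ell^m(\cos\theta)e^{im\varphi}$ of the spherical harmonics. Writing $f_\ell=\sqrt{4\pi/(2\ell+1)}\sum_m a_{\ell m}Y_{\ell m}$, I would first record the action of the covariant derivatives on each basis element: $e_1^x Y_{\ell m}=\partial_\theta Y_{\ell m}$ differentiates the Legendre factor, while $e_2^x Y_{\ell m}=(\sin\theta)^{-1}\partial_\varphi Y_{\ell m}$ brings down a factor $im/\sin\theta$; the three second covariant derivatives are read off from the explicit Hessian matrix (with its Christoffel corrections) already recorded in the preceding section. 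Using the fact that the trace of the covariant Hessian is the Laplace--Beltrami operator, so that $e_1^x e_1^x f_\ell+e_2^x e_2^x f_\ell=\Delta_{\mathbb{S}^2}f_\ell=-\lambda_\ell f_\ell$ everywhere (cf.\ (\ref{linear_dp})), I would moreover replace $e_2^x e_2^x f_\ell$ by $-\lambda_\ell f_\ell-e_1^x e_1^x f_\ell$ wherever convenient, so that $I_{0,22}$ and $I_{22,22}$ are reduced to integrals involving only $f_\ell$ and $\partial_\theta^2 f_\ell=e_1^x e_1^x f_\ell$.

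Next I would substitute these expansions into each integral and carry out the $\varphi$-integration first. Since $f_\ell$ is real, each integrand can be written as a product of a term indexed by $m$ and the complex conjugate of a term indexed by $m'$, whose azimuthal dependence is $e^{i(m-m')\varphi}$; the identity $\int_0^{2\pi}e^{i(m-m')\varphi}\,d\varphi=2\pi\,\delta_{mm'}$ then annihilates all off-diagonal terms and collapses every double sum to a single sum over $m$, weighted by $|a_{\ell m}|^2$. What remains in each case is $|a_{\ell m}|^2$ times a one-dimensional integral over $[0,\pi]$, against the measure $\sin\theta\,d\theta$, of a product of $P_\ell^m(\cos\theta)$ and its first/second $\theta$-derivatives, with possible negative powers of $\sin\theta$ coming from the $e_2^x$ operators.

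The main work, and the principal obstacle, is the explicit evaluation of these Legendre integrals---quantities such as $\int_0^\pi (P_\ell^m)^2\sin\theta\,d\theta$ and $\int_0^\pi (P_\ell^m)^2(\sin\theta)^{-1}\,d\theta$, together with their analogues carrying one or two $\theta$-derivatives and extra factors of $\cos\theta$ or $\sin\theta$. My approach would be to lower the order of the derivatives by repeated integration by parts (the weight $\sin\theta\,d\theta$ and the boundary behaviour at $\theta=0,\pi$ must be tracked carefully) and to eliminate second derivatives using the associated-Legendre equation $\frac{d}{d\theta}\bigl(\sin\theta\,\frac{d}{d\theta}P_\ell^m\bigr)+\bigl(\lambda_\ell-m^2/\sin^2\theta\bigr)\sin\theta\,P_\ell^m=0$; the residual base integrals are then computed from the normalization $\int_0^\pi (P_\ell^m)^2\sin\theta\,d\theta=\frac{2}{2\ell+1}\frac{(\ell+m)!}{(\ell-m)!}$ and its companion $\int_0^\pi (P_\ell^m)^2(\sin\theta)^{-1}\,d\theta=\frac{1}{m}\frac{(\ell+m)!}{(\ell-m)!}$, supplemented by recurrence relations linking $P_\ell^{m\pm1}$. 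This is exactly the delicate analytic step that is deferred to the dedicated appendix on derivatives of associated Legendre functions.

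Finally, I would reassemble the results. After inserting the normalization constants $c_{\ell m}$ (so that the orthonormality of the $Y_{\ell m}$ is respected) and simplifying, each integral becomes a linear combination of terms $\sum_m |a_{\ell m}|^2$ weighted by explicit rational functions of $m$ and $\ell$. Isolating the $m=0$ contribution and invoking the symmetry $|a_{\ell,-m}|^2=|a_{\ell m}|^2$ (which follows from $(-1)^m a_{\ell,-m}=\bar a_{\ell m}$) would let me fold the $m<0$ and $m>0$ pieces together and recast the answers in the symmetric form $\sum_{m=-\ell}^\ell |a_{\ell m}|^2(\cdots)$ claimed in the statement, thereby yielding all six identities.
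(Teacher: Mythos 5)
Your proposal follows essentially the same route as the paper: expand $f_\ell$ in the separated basis $Y_{\ell m}=c_{\ell m}P_\ell^m(\cos\theta)e^{im\varphi}$, use the $\varphi$-orthogonality to kill all off-diagonal terms and collapse each integral to a single sum of $|a_{\ell m}|^2$ against one-dimensional integrals of associated Legendre functions and their derivatives, and then evaluate those from the base integrals $\int(P_\ell^m)^2\sin\theta\,d\theta$ and $\int(P_\ell^m)^2(\sin\theta)^{-1}d\theta$ together with the recurrences linking $P_\ell^{m\pm1}$ (exactly the content of the paper's Appendix C). The only cosmetic deviation is that you eliminate $e_2^xe_2^xf_\ell$ via the trace identity $e_1^xe_1^xf_\ell+e_2^xe_2^xf_\ell=-\lambda_\ell f_\ell$, whereas the paper expands it directly through the Christoffel-corrected Hessian entry $-\frac{m^2}{\sin^2\theta}Y_{\ell m}+\frac{\cos\theta}{\sin\theta}\partial_\theta Y_{\ell m}$; both lead to the same family of Legendre integrals.
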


\begin{proof}
We introduce here the standard basis for spherical harmonics, see, i.e., 
\cite{MaPeCUP}, Section 13.2, which is given by 
\begin{equation*}
Y_{\ell m}(\theta ,\varphi )=%
\begin{cases}
e^{im\varphi }\sqrt{\frac{2\ell +1}{4\pi }\frac{(\ell -m)!}{(\ell +m)!}}%
P_{\ell }^{m}(\cos \theta ), & m\geq 0, \\ 
(-1)^{m}e^{im\varphi }\sqrt{\frac{2\ell +1}{4\pi }\frac{(\ell +m)!}{(\ell
-m)!}}P_{\ell }^{-m}(\cos \theta ), & m<0,%
\end{cases}%
\end{equation*}%
where we introduced also the associated Legendre functions, which are
defined by 
\begin{equation*}
P_{\ell }^{m}(x)=%
\begin{cases}
(-1)^{m}(1-x^{2})^{m/2}\frac{d^{m}}{dx^{m}}P_{\ell }(x), & m\geq 0, \\ 
(-1)^{m}\frac{(\ell +m)!}{(\ell -m)!}P_{\ell }^{-m}(x), & m<0.%
\end{cases}%
\end{equation*}%
Let us recall also the trivial orthogonality relationships 
\begin{equation*}
\int_{0}^{2\pi }e^{im\varphi }e^{in\varphi }d\varphi =%
\begin{cases}
2\pi & n=-m, \\ 
0 & n\neq -m,%
\end{cases}%
\end{equation*}%
which yield 
\begin{equation*}
\frac{1}{2\ell +1}\int_{0}^{2\pi }Y_{\ell m}(\theta ,\varphi )Y_{\ell
n}(\theta ,\varphi )d\varphi =%
\begin{cases}
\frac{1}{2}\frac{(\ell -m)!}{(\ell +m)!}(-1)^{m}\{P_{\ell }^{m}(\cos \theta
)\}^{2} & n=-m, \\ 
0 & n\neq -m.%
\end{cases}%
\end{equation*}%
Our next tool are the analytic expression for derivatives of spherical
harmonics, which we recall to be given by 
\begin{equation*}
e_{1}^{x}Y_{\ell m}(x)=\frac{\partial }{\partial \theta }Y_{\ell m}(\theta
,\varphi ),\;\;\;\;e_{2}^{x}Y_{\ell m}(x)=\frac{1}{\sin \theta }\frac{%
\partial }{\partial \varphi }Y_{\ell m}(\theta ,\varphi )=\frac{im}{\sin
\theta }Y_{\ell m}(\theta ,\varphi ),\text{ }
\end{equation*}%
and moreover%
\begin{eqnarray*}
e_{1}^{x}e_{2}^{x}Y_{\ell m}(x) &=&\frac{1}{\sin \theta }\frac{\partial }{%
\partial \varphi }\frac{\partial }{\partial \theta }Y_{\ell m}(\theta
,\varphi )-\frac{\cos \theta }{\sin ^{2}\theta }\frac{\partial }{\partial
\varphi }Y_{\ell m}(\theta ,\varphi ) \\
&=&\frac{im}{\sin \theta }\frac{\partial }{\partial \theta }Y_{\ell
m}(\theta ,\varphi )-im\frac{\cos \theta }{\sin ^{2}\theta }Y_{\ell
m}(\theta ,\varphi ),
\end{eqnarray*}%
\begin{eqnarray*}
e_{2}^{x}e_{2}^{x}Y_{\ell m}(x) &=&\frac{1}{\sin ^{2}\theta }\frac{\partial
^{2}}{\partial \varphi ^{2}}Y_{\ell m}(\theta ,\varphi )+\frac{\cos \theta }{%
\sin \theta }\frac{\partial }{\partial \theta }Y_{\ell m}(\theta ,\varphi )
\\
&=&-\frac{m^{2}}{\sin ^{2}\theta }\;Y_{\ell m}(\theta ,\varphi )+\frac{\cos
\theta }{\sin \theta }\frac{\partial }{\partial \theta }Y_{\ell m}(\theta
,\varphi ).
\end{eqnarray*}%
Finally, we recall that the spherical harmonic coefficients satisfy the
following identities, see again \cite{MaPeCUP}, formula (6.6): 
\begin{equation*}
(-1)^{m}a_{\ell ,-m}=\bar{a}_{\ell m},\hspace{2cm}(-1)^{m}a_{\ell m}a_{\ell
,-m}=|a_{\ell m}|^{2}.
\end{equation*}%
The first part of a) is a trivial consequence of the Parseval's identity, or
the orthonormality of spherical harmonics:%
\begin{equation*}
I_{00}(\ell )=\int_{\mathbb{S}^{2}}f_{\ell }^{2}(x)dx=\frac{1}{2\ell +1}%
\sum_{m=-\ell }^{\ell }|a_{\ell m}|^{2}.
\end{equation*}%
For the other two integrals in a), the first step is to rewrite them as
functions of derivatives of associated Legendre functions, as follows:%
\begin{equation*}
I_{11}(\ell )=\int_{\mathbb{S}^{2}}\left\{ e_{1}^{x}f_{\ell }(x)\right\}
^{2}dx
\end{equation*}%
\begin{equation*}
=\frac{a_{\ell 0}^{2}}{2}\int_{0}^{\pi }\left\{ \frac{d}{d\theta }P_{\ell
}^{0}(\cos \theta )\right\} ^{2}\sin \theta d\theta +\sum_{m>0}|a_{\ell
m}|^{2}\frac{(\ell -m)!}{(\ell +m)!}\int_{0}^{\pi }\left\{ \frac{d}{d\theta }%
P_{\ell }^{m}(\cos \theta )\right\} ^{2}\sin \theta d\theta ,
\end{equation*}%
\begin{equation*}
I_{22}(\ell )=\int_{\mathbb{S}^{2}}\left\{ e_{2}^{x}f_{\ell }(x)\right\}
^{2}dx
\end{equation*}%
\begin{equation*}
=\sum_{m>0}|a_{\ell m}|^{2}m^{2}\frac{(\ell -m)!}{(\ell +m)!}\int_{0}^{\pi }%
\frac{1}{\sin ^{2}\theta }\left\{ P_{\ell }^{m}(\cos \theta )\right\}
^{2}\sin \theta d\theta .
\end{equation*}%
The same approach is needed to rewrite the integral in b):%
\begin{equation*}
I_{0,22}(\ell )=\int_{\mathbb{S}^{2}}f_{\ell }(x)e_{2}^{x}e_{2}^{x}f_{\ell
}(x)dx
\end{equation*}%
\begin{align*}
& =-\sum_{m>0}|a_{\ell m}|^{2}\frac{(\ell -m)!}{(\ell +m)!}%
m^{2}\int_{0}^{\pi }\frac{1}{\sin ^{2}\theta }\left\{ P_{\ell }^{m}(\cos
\theta )\right\} ^{2}\sin \theta d\theta \\
& \;\;+\frac{a_{\ell 0}^{2}}{2}\int_{0}^{\pi }\frac{\cos \theta }{\sin
\theta }P_{\ell }^{0}(\cos \theta )\frac{d}{d\theta }P_{\ell }^{0}(\cos
\theta )\sin \theta d\theta \\
& \;\;+\sum_{m>0}|a_{\ell m}|^{2}\frac{(\ell -m)!}{(\ell +m)!}\int_{0}^{\pi }%
\frac{\cos \theta }{\sin \theta }P_{\ell }^{m}(\cos \theta )\frac{d}{d\theta 
}P_{\ell }^{m}(\cos \theta )\sin \theta d\theta ,
\end{align*}%
and similarly for c):%
\begin{equation*}
I_{12,12}(\ell )=\int_{\mathbb{S}^{2}}\left\{ e_{1}^{x}e_{2}^{x}f_{\ell
}(x)\right\} ^{2}dx
\end{equation*}%
\begin{align*}
& =\sum_{m>0}|a_{\ell m}|^{2}\frac{(\ell -m)!}{(\ell +m)!}m^{2}\int_{0}^{\pi
}\frac{1}{\sin ^{2}\theta }\left\{ \frac{d}{d\theta }P_{\ell }^{m}(\cos
\theta )\right\} ^{2}\sin \theta d\theta \\
& \;\;-2\sum_{m>0}|a_{\ell m}|^{2}\frac{(\ell -m)!}{(\ell +m)!}%
m^{2}\int_{0}^{\pi }\frac{\cos \theta }{\sin ^{3}\theta }\left\{ \frac{d}{%
d\theta }P_{\ell }^{m}(\cos \theta )\right\} P_{\ell }^{m}(\cos \theta )\sin
\theta d\theta \\
& \;\;+\sum_{m>0}|a_{\ell m}|^{2}\frac{(\ell -m)!}{(\ell +m)!}%
m^{2}\int_{0}^{\pi }\frac{\cos ^{2}\theta }{\sin ^{4}\theta }\left\{ P_{\ell
}^{m}(\cos \theta )\right\} ^{2}\sin \theta d\theta ,
\end{align*}%
\begin{equation*}
I_{22,22}(\ell )=\int_{\mathbb{S}^{2}}\{e_{2}^{x}e_{2}^{x}f_{\ell
}(x)\}^{2}dx
\end{equation*}%
\begin{align*}
& =\sum_{m>0}|a_{\ell m}|^{2}\frac{(\ell -m)!}{(\ell +m)!}m^{4}\int_{0}^{\pi
}\frac{1}{\sin ^{4}\theta }\left\{ P_{\ell }^{m}(\cos \theta )\right\}
^{2}\sin \theta d\theta \\
& \;\;-2\sum_{m>0}|a_{\ell m}|^{2}\frac{(\ell -m)!}{(\ell +m)!}%
m^{2}\int_{0}^{\pi }\frac{\cos \theta }{\sin ^{3}\theta }P_{\ell }^{m}(\cos
\theta )\left\{ \frac{d}{d\theta }P_{\ell }^{m}(\cos \theta )\right\} \sin
\theta d\theta \\
& \;\;+\frac{a_{\ell 0}^{2}}{2}\int_{0}^{\pi }\frac{\cos ^{2}\theta }{\sin
^{2}\theta }\left\{ \frac{d}{d\theta }P_{\ell }^{0}(\cos \theta )\right\}
^{2}\sin \theta d\theta \\
& \;\;+\sum_{m>0}|a_{\ell m}|^{2}\frac{(\ell -m)!}{(\ell +m)!}\int_{0}^{\pi }%
\frac{\cos ^{2}\theta }{\sin ^{2}\theta }\left\{ \frac{d}{d\theta }P_{\ell
}^{m}(\cos \theta )\right\} ^{2}\sin \theta d\theta .
\end{align*}%
It is now convenient to introduce the following, more compact notation for
integrals of associated Legendre functions and their derivatives; more
precisely, we shall write%
\begin{equation*}
J_{1}(\ell ,m):=\frac{(\ell -m)!}{(\ell +m)!}\int_{0}^{\pi }\left\{ \frac{d}{%
d\theta }P_{\ell }^{m}(\cos \theta )\right\} ^{2}\sin \theta d\theta ,
\end{equation*}%
\begin{equation*}
J_{2}(\ell ,m):=\frac{(\ell -m)!}{(\ell +m)!}\int_{0}^{\pi }\frac{1}{\sin
^{2}\theta }\left\{ P_{\ell }^{m}(\cos \theta )\right\} ^{2}\sin \theta
d\theta ,
\end{equation*}%
\begin{equation*}
J_{3}(\ell ,m):=\frac{(\ell -m)!}{(\ell +m)!}\int_{0}^{\pi }\frac{\cos
\theta }{\sin \theta }P_{\ell }^{m}(\cos \theta )\frac{d}{d\theta }P_{\ell
}^{m}(\cos \theta )\sin \theta d\theta ,
\end{equation*}%
\begin{equation*}
J_{4}(\ell ,m):=\frac{(\ell -m)!}{(\ell +m)!}\int_{0}^{\pi }\frac{1}{\sin
^{2}\theta }\left\{ \frac{d}{d\theta }P_{\ell }^{m}(\cos \theta )\right\}
^{2}\sin \theta d\theta ,
\end{equation*}%
\begin{equation*}
J_{5}(\ell ,m):=\frac{(\ell -m)!}{(\ell +m)!}\int_{0}^{\pi }\frac{\cos
\theta }{\sin ^{3}\theta }\left\{ \frac{d}{d\theta }P_{\ell }^{m}(\cos
\theta )\right\} P_{\ell }^{m}(\cos \theta )\sin \theta d\theta ,
\end{equation*}%
\begin{equation*}
J_{6}(\ell ,m):=\frac{(\ell -m)!}{(\ell +m)!}\int_{0}^{\pi }\frac{\cos
^{2}\theta }{\sin ^{4}\theta }\left\{ P_{\ell }^{m}(\cos \theta )\right\}
^{2}\sin \theta d\theta ,
\end{equation*}%
\begin{equation*}
J_{7}(\ell ,m):=\frac{(\ell -m)!}{(\ell +m)!}\int_{0}^{\pi }\frac{1}{\sin
^{4}\theta }\left\{ P_{\ell }^{m}(\cos \theta )\right\} ^{2}\sin \theta
d\theta ,
\end{equation*}%
and 
\begin{equation*}
J_{8}(\ell ,m)=\frac{(\ell -m)!}{(\ell +m)!}\int_{0}^{\pi }\frac{\cos
^{2}\theta }{\sin ^{2}\theta }\left\{ \frac{d}{d\theta }P_{\ell }^{m}(\cos
\theta \right\} ^{2}\sin \theta d\theta .
\end{equation*}%
It is then readily verified that%
\begin{equation*}
I_{11}(\ell )=\frac{a_{\ell 0}^{2}}{2}J_{1}(\ell ,0)+\sum_{m>0}|a_{\ell
m}|^{2}J_{1}(\ell ,m),\;\;\;\;I_{22}(\ell )=\sum_{m>0}|a_{\ell
m}|^{2}m^{2}J_{2}(\ell ,m);
\end{equation*}%
moreover%
\begin{eqnarray*}
I_{0,22}(\ell ) &=&-\sum_{m>0}|a_{\ell m}|^{2}m^{2}J_{2}(\ell ,m)+\frac{%
a_{\ell 0}^{2}}{2}J_{3}(\ell ,0)+\sum_{m>0}|a_{\ell m}|^{2}J_{3}(\ell ,m) \\
&=&\frac{a_{\ell 0}^{2}}{2}J_{3}(\ell ,0)+\sum_{m>0}|a_{\ell
m}|^{2}\{J_{3}(\ell ,m)-m^{2}J_{2}(\ell ,m)\},
\end{eqnarray*}%
and%
\begin{eqnarray*}
I_{12,12}(\ell ) &=&\sum_{m>0}|a_{\ell m}|^{2}m^{2}J_{4}(\ell
,m)-2\sum_{m>0}|a_{\ell m}|^{2}m^{2}J_{5}(\ell ,m)+\sum_{m>0}|a_{\ell
m}|^{2}m^{2}J_{6}(\ell ,m) \\
&=&\sum_{m>0}|a_{\ell m}|^{2}m^{2}\{J_{4}(\ell ,m)-2J_{5}(\ell
,m)+J_{6}(\ell ,m)\},
\end{eqnarray*}%
\begin{equation*}
I_{22,22}(\ell )=\sum_{m>0}|a_{\ell m}|^{2}m^{4}J_{7}(\ell
,m)-2\sum_{m>0}|a_{\ell m}|^{2}m^{2}J_{5}(\ell ,m)+\frac{a_{\ell 0}^{2}}{2}%
J_{8}(\ell ,0)+\sum_{m>0}|a_{\ell m}|^{2}J_{8}(\ell ,m)
\end{equation*}%
\begin{equation*}
=\frac{a_{\ell 0}^{2}}{2}J_{8}(\ell ,0)+\sum_{m>0}|a_{\ell
m}|^{2}\{m^{4}J_{7}(\ell ,m)-2m^{2}J_{5}(\ell ,m)+J_{8}(\ell ,m)\}.
\end{equation*}%
The proof can then be completed by an explicit computation for the integrals 
$J_{a}(\ell ,m),$ $a=1,...,7,$ which is given in the following Lemma.
\end{proof}

\begin{lemma}
The following explicit evaluations hold for all $m=-\ell ,...,\ell$: 
\begin{equation*}
J_{1}(\ell ,m) =2\frac{\ell (\ell +1)}{2\ell +1}-m,
\end{equation*}
for $m\neq 0$ we have 
\begin{equation*}
J_{2}(\ell ,m)=\frac{1}{m}, \;\;\;\; J_{3}(\ell ,m)=\frac{1}{2\ell +1},
\end{equation*}
and, for $m \ne 0, \pm 1$, we also have 
\begin{equation*}
J_{4}(\ell ,m) =\frac{m}{2}\frac{\ell ^{2}+\ell +1-m^{2}}{m^{2}-1}, \;\;
J_{8}(\ell ,m)=\frac{1}{2}\left\{ m+\frac{\ell (\ell +1)(4+m+2\ell m-4m^{2})%
}{(2\ell +1)(m^{2}-1)}\right\} ,
\end{equation*}
\begin{equation*}
J_{5}(\ell ,m)=\frac{\ell (\ell +1)}{2m(m^{2}-1)}, \;\; J_{6}(\ell ,m)=\frac{%
\ell ^{2}+\ell +1-m^{2}}{2m(m^{2}-1)}, \;\; J_{7}(\ell ,m)=\frac{\ell
^{2}+\ell -1+m^{2}}{2m(m^{2}-1)}.
\end{equation*}
In particular we note that, for all $m \ne 0$, the following identities hold 
\begin{equation*}
J_{3}(\ell ,m)-m^{2}J_{2}(\ell ,m)=\frac{1}{2\ell +1}-m,
\end{equation*}%
\begin{equation*}
J_{4}(\ell ,m)-2J_{5}(\ell ,m)+J_{6}(\ell ,m)=\frac{\ell (\ell +1)-m^{2}-1}{%
2m},
\end{equation*}%
\begin{equation*}
m^{4}J_{7}(\ell ,m)-2m^{2}J_{5}(\ell ,m)+J_{8}(\ell ,m)=\frac{1}{2}\left\{ -4%
\frac{\ell (\ell +1)}{2\ell +1}+m+\ell (\ell +1)m+m^{3}\right\}.
\end{equation*}
and that, for $m=0$, we have 
\begin{equation*}
J_{1}(\ell ,0) =2\frac{\ell (\ell +1)}{2\ell +1}, \;\;\;\; J_{3}(\ell ,0) =-%
\frac{2 \ell }{2\ell +1},\;\;\;\; J_{8}(\ell ,0) =\ell^2 - \frac{\ell}{2
\ell+1}.
\end{equation*}
\end{lemma}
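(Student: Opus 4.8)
The plan is to reduce every one of the eight integrals $J_a(\ell,m)$ to a short list of ``master'' integrals involving the associated Legendre function $P_\ell^m$ alone, which can then be evaluated in closed form. The first move is the substitution $x=\cos\theta$: using $\frac{d}{d\theta}P_\ell^m(\cos\theta)=-\sqrt{1-x^2}\,(P_\ell^m)'(x)$ and $\sin^2\theta=1-x^2$, each $J_a$ becomes $\frac{(\ell-m)!}{(\ell+m)!}$ times an integral over $[-1,1]$ of one of the products $[P_\ell^m]^2$, $[(P_\ell^m)']^2$ or $P_\ell^m(P_\ell^m)'$ against a rational weight in $x$ and $1-x^2$. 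For instance $J_1$ turns into $\int_{-1}^1(1-x^2)[(P_\ell^m)']^2\,dx$, $J_2$ into $\int_{-1}^1\frac{[P_\ell^m]^2}{1-x^2}\,dx$, and the most singular ones $J_6$, $J_7$ into $\int_{-1}^1\frac{x^2[P_\ell^m]^2}{(1-x^2)^2}\,dx$ and $\int_{-1}^1\frac{[P_\ell^m]^2}{(1-x^2)^2}\,dx$ respectively.

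Next I would eliminate all derivatives. Writing $2(P_\ell^m)'P_\ell^m=([P_\ell^m]^2)'$ and integrating by parts turns the cross terms into pure squares, while the associated Legendre equation, in the form $\frac{d}{dx}[(1-x^2)(P_\ell^m)']=\big(\frac{m^2}{1-x^2}-\ell(\ell+1)\big)P_\ell^m$, removes the remaining second derivatives. The boundary terms produced here vanish exactly when $m\ge 2$, since $P_\ell^m(x)=O((1-x^2)^{m/2})$ as $x\to\pm1$; it is precisely these boundary contributions that survive at $m=0,\pm1$, which is the source of the exclusions $m\ne 0,\pm1$ (and of the $m^2-1$ denominators) in the statement. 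After this reduction every $J_a$ is a rational-in-$(\ell,m)$ combination of the three master integrals $\int_{-1}^1[P_\ell^m]^2$, $\int_{-1}^1\frac{[P_\ell^m]^2}{1-x^2}$ and $\int_{-1}^1\frac{[P_\ell^m]^2}{(1-x^2)^2}$.

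The first two masters are classical: $\int_{-1}^1[P_\ell^m]^2\,dx=\frac{2}{2\ell+1}\frac{(\ell+m)!}{(\ell-m)!}$ (orthonormality) and $\int_{-1}^1\frac{[P_\ell^m]^2}{1-x^2}\,dx=\frac{1}{m}\frac{(\ell+m)!}{(\ell-m)!}$ for $m\ge1$. The genuinely new ingredient, and the main obstacle, is the doubly-weighted integral $\int_{-1}^1\frac{[P_\ell^m]^2}{(1-x^2)^2}\,dx$, which gives $J_7$. I would obtain it from the first-order ladder relation in $m$, $\sqrt{1-x^2}\,P_\ell^{m+1}=-(1-x^2)(P_\ell^m)'-mx\,P_\ell^m$ (a direct consequence of the Rodrigues definition of $P_\ell^m$): dividing by $1-x^2$, squaring, and integrating, the left-hand side becomes $\int_{-1}^1\frac{[P_\ell^{m+1}]^2}{1-x^2}$, known by the weighted orthogonality at index $m+1$. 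After using integration by parts and the differential equation once more to express $\int_{-1}^1[(P_\ell^m)']^2$ and the cross term through the same masters, this collapses to a single linear equation whose only unknown is $\int_{-1}^1\frac{[P_\ell^m]^2}{(1-x^2)^2}$; solving it yields $J_7=\frac{\ell^2+\ell-1+m^2}{2m(m^2-1)}$.

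Finally I would assemble the results and check the stated identities. Most of the $J_a$ now follow immediately; for example the decompositions $\frac{x^2}{(1-x^2)^2}=\frac{1}{(1-x^2)^2}-\frac{1}{1-x^2}$ and $\frac{1+x^2}{(1-x^2)^2}=\frac{2}{(1-x^2)^2}-\frac{1}{1-x^2}$ give at once $J_6=J_7-J_2$ and $J_5=J_7-\tfrac12 J_2$. The three combined identities for $J_3-m^2J_2$, $J_4-2J_5+J_6$ and $m^4J_7-2m^2J_5+J_8$ are then verified by clearing the common denominator $2m(m^2-1)$ and comparing polynomials in $\ell$ and $m$. The separate values $J_1(\ell,0)$, $J_3(\ell,0)$, $J_8(\ell,0)$ are recovered by rerunning the integration-by-parts step while retaining the nonvanishing boundary terms $P_\ell(\pm1)=(\pm1)^\ell$. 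The only delicate bookkeeping throughout is the tracking of these boundary terms and of the exceptional indices $m=\pm1$.
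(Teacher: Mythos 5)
Your proposal is correct, and I checked that its key step goes through: after expressing $\int_{-1}^{1}[(P_\ell^m)']^2dz$ and the cross term $\int_{-1}^{1}\frac{z}{1-z^2}P_\ell^m(P_\ell^m)'dz$ through the three master integrals via integration by parts and the Legendre ODE, your single ladder relation does collapse to a linear equation in $\int_{-1}^{1}\frac{\{P_\ell^m\}^2}{(1-z^2)^2}dz$ with coefficient $2-2m$, and solving it reproduces $J_7=\frac{\ell^2+\ell-1+m^2}{2m(m^2-1)}$; the remaining $J_a$ then follow exactly as you describe. However, your organization is genuinely different from the paper's. The paper never invokes the differential equation: after the same substitution $z=\cos\theta$, it evaluates each weighted integral by inserting a recurrence that writes the relevant combination ($P_\ell^m/\sqrt{1-z^2}$, $zP_\ell^m$, $\sqrt{1-z^2}\,(P_\ell^m)'$, $(1-z^2)(P_\ell^m)'$) as a linear combination of associated Legendre functions of shifted order or degree ($P_{\ell-1}^{m\pm1}$, $P_\ell^{m\pm1}$, $P_{\ell\pm1}^m$), squares, and reads everything off from Samaddar's tabulated integrals $\int_{-1}^{1}\frac{\{P_\ell^m\}^2}{1-z^2}dz$, $\int_{-1}^{1}zP_\ell^m(P_\ell^m)'dz$ and $\int_{-1}^{1}\frac{P_{\ell-1}^mP_{\ell+1}^m}{1-z^2}dz$; in particular the doubly weighted integral that you extract by solving a linear equation is obtained there in one line from the order-lowering identity plus orthogonality of $P_{\ell-1}^{m-1}$ and $P_{\ell-1}^{m+1}$ under the weight $1/(1-z^2)$. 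Your route buys self-containedness (only the two classical norms and one $m$-ladder relation, no degree-shifting recurrences or external cross-product formulas) at the price of heavier boundary-term bookkeeping, which you correctly identify as the source of the $m\neq0,\pm1$ exclusions. One caution: the combined identities such as $J_4-2J_5+J_6$ are asserted for all $m\neq0$, so at $m=\pm1$ you cannot establish them by clearing the denominator $2m(m^2-1)$ from closed forms that are individually singular there; you would need to rerun the reduction retaining the $m=1$ boundary terms (the paper's own proof is equally silent on this point, and likewise both arguments really establish the displayed formulas only for $m\geq 0$, which suffices since only $m>0$ enters Proposition \ref{AB}).
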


\begin{proof}
The proofs are all easy consequences of some simple change of variables
formulae and the analytic results on integrals of Associated Legendre
Functions which we collected in Section \ref{IntALF}. More precisely,
exploiting Lemma \ref{ALFsquares} one obtains: 
\begin{equation*}
J_{2}(\ell ,m)=\frac{(\ell -m)!}{(\ell +m)!}\int_{-1}^{1}\frac{1}{1-z^{2}}%
\left\{ P_{\ell }^{m}(z)\right\} ^{2}dz=\frac{1}{m},
\end{equation*}%
in view of (\ref{111m}) and (\ref{diffm}), moreover, by applying (\ref{diffm}%
), we have 
\begin{equation*}
J_{6}(\ell ,m)=\frac{(\ell -m)!}{(\ell +m)!}\int_{-1}^{1}\frac{z^{2}}{%
(1-z^{2})^{2}}\left\{ P_{\ell }^{m}(z)\right\} ^{2}dz=\frac{\ell ^{2}+\ell
+1-m^{2}}{2m(m^{2}-1)},
\end{equation*}%
and from (\ref{111m}) 
\begin{equation*}
J_{7}(\ell ,m)=\frac{(\ell -m)!}{(\ell +m)!}\int_{-1}^{1}\frac{1}{%
(1-z^{2})^{2}}\left\{ P_{\ell }^{m}(z)\right\} ^{2}dz=\frac{\ell ^{2}+\ell
-1+m^{2}}{2m(m^{2}-1)}.
\end{equation*}%
Similarly, from (\ref{samaddar_2m}) we have 
\begin{equation*}
J_{3}(\ell ,m)=-\frac{(\ell -m)!}{(\ell +m)!}\int_{-1}^{1}zP_{\ell }^{m}(z)%
\frac{d}{dz}P_{\ell }^{m}(z)dz=\frac{1}{2\ell +1},
\end{equation*}%
and, in view of Lemma \ref{ALFcrossproducts}, 
\begin{equation*}
J_{5}(\ell ,m)=-\frac{(\ell -m)!}{(\ell +m)!}\int_{-1}^{1}\frac{z}{1-z^{2}}%
P_{\ell }^{m}(z)\left\{ \frac{d}{dz}P_{\ell }^{m}(z)\right\} dz=\frac{\ell
(\ell +1)}{2m(m^{2}-1)}.
\end{equation*}%
Finally, using Lemma \ref{ALFsquaredderivatives}, from (\ref{444m}) we have 
\begin{equation*}
J_{4}(\ell ,m)=\frac{(\ell -m)!}{(\ell +m)!}\int_{-1}^{1}\left\{ \frac{d}{dz}%
P_{\ell }^{m}(z)\right\} ^{2}dz=\frac{m}{2}\frac{\ell ^{2}+\ell +1-m^{2}}{%
m^{2}-1};
\end{equation*}%
from (\ref{333m}) we have 
\begin{equation*}
J_{1}(\ell ,m)=\frac{(\ell -m)!}{(\ell +m)!}\int_{-1}^{1}(1-z^{2})\left\{ 
\frac{d}{dz}P_{\ell }^{m}(z)\right\} ^{2}dz=2\frac{\ell (\ell +1)}{2\ell +1}%
-m,
\end{equation*}%
and, in view of (\ref{J8m}), 
\begin{equation*}
J_{8}(\ell ,m)=\frac{(\ell -m)!}{(\ell +m)!}\int_{-1}^{1}z^{2}\left\{ \frac{d%
}{dz}P_{\ell }^{m}(z)\right\} ^{2}dz=\frac{1}{2}\left\{ m+\frac{\ell (\ell
+1)(4+m+2\ell m-4m^{2})}{(2\ell +1)(m^{2}-1)}\right\} .
\end{equation*}
\end{proof}

\subsection{Appendix C: Some Integrals of Associated Legendre Functions 
\label{IntALF}}

In this final Appendix, we need to report some explicit computations on
integrals involving cross products of associated Legendre functions and
their derivatives. For some of these results we managed to find references,
others may be known already but we failed to locate any suitable reference
and therefore we report their proofs entirely; we believe they may have some
independent interest for related works on the geometry of random spherical \
harmonics. In particular, the following two results are given in \cite%
{samaddar} equation (25) and equation (37), respectively 
\begin{equation}
\int_{-1}^{1}\frac{1}{1-z^{2}}\left\{ P_{\ell }^{m}(z)\right\} ^{2}dz=\frac{%
(\ell +m)!}{m(\ell -m)!},  \label{samaddar_1m}
\end{equation}%
\begin{equation}
\int_{-1}^{1}zP_{\ell }^{m}(z)\left\{ \frac{d}{dz}P_{\ell }^{m}(z)\right\}
dz=\delta _{0,m}-\frac{(\ell +m)!}{(2\ell +1)(\ell -m)!}.
\label{samaddar_2m}
\end{equation}%
The other integrals we shall need are given in the following three Lemmas;
the first deals with squares of associated Legendre functions, the second
with cross-product of Legendre functions and their derivatives, the third
with squared derivatives.

\begin{lemma}
\label{ALFsquares} The following analytic expressions hold for all values of 
$\ell =1,2,3, \dots$: 
\begin{align}
&\int_{-1}^{1}\frac{1}{(1-z^{2})^{2}}\left\{ P_{\ell }^{m}(z)\right\} ^{2}dz
\label{111m} \\
&=\frac{1}{4m^{2}}\left\{ \frac{(\ell +m)(\ell +m-1)(\ell +m)!}{(m-1)(\ell
-m)!}+\frac{(\ell +m)!}{(m+1)(\ell -m-2)!}\right\},  \notag
\end{align}
\begin{align}
&\int_{-1}^{1}\frac{z^{2}}{(1-z^{2})^{2}}\left\{ P_{\ell }^{m}(z)\right\}
^{2}dz  \label{diffm} \\
&=\frac{1}{4m^{2}}\left\{ \frac{(\ell +m)(\ell +m-1)(\ell +m)!}{(m-1)(\ell
-m)!}+\frac{(\ell +m)!}{(m+1)(\ell -m-2)!}\right\} -\frac{(\ell +m)!}{m(\ell
-m)!}.  \notag
\end{align}
\end{lemma}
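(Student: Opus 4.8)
The plan is to prove the harder identity (\ref{111m}) first and then read off (\ref{diffm}) from it almost for free. Indeed, using the elementary splitting $\frac{z^{2}}{(1-z^{2})^{2}}=\frac{1}{(1-z^{2})^{2}}-\frac{1}{1-z^{2}}$ and integrating $\{P_{\ell}^{m}\}^{2}$ against both sides, the claim (\ref{diffm}) is exactly (\ref{111m}) minus the single-pole integral $\int_{-1}^{1}\frac{\{P_{\ell}^{m}\}^{2}}{1-z^{2}}dz=\frac{(\ell+m)!}{m(\ell-m)!}$, which is supplied by the already-cited formula (\ref{samaddar_1m}). Thus the entire content of the Lemma is the evaluation of the double-pole integral (\ref{111m}).

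For (\ref{111m}) I would lower the order of the pole by one integration by parts. The starting point is the identity $\frac{2}{(1-z^{2})^{2}}=\frac{d}{dz}\!\left(\frac{z}{1-z^{2}}\right)+\frac{1}{1-z^{2}}$, which follows from $\frac{d}{dz}\frac{z}{1-z^{2}}=\frac{1+z^{2}}{(1-z^{2})^{2}}$ together with $1+z^{2}=2-(1-z^{2})$. Multiplying by $\{P_{\ell}^{m}\}^{2}$ and integrating, the $\frac{1}{1-z^{2}}$ piece is again handled by (\ref{samaddar_1m}), while the exact-derivative piece is integrated by parts. For $|m|\ge 2$ one has $P_{\ell}^{m}\sim(1-z^{2})^{m/2}$ at the endpoints, so the boundary term $\left[\{P_{\ell}^{m}\}^{2}\frac{z}{1-z^{2}}\right]_{-1}^{1}$ vanishes, leaving $2\int_{-1}^{1}\frac{\{P_{\ell}^{m}\}^{2}}{(1-z^{2})^{2}}dz=\int_{-1}^{1}\frac{\{P_{\ell}^{m}\}^{2}}{1-z^{2}}dz-2\int_{-1}^{1}P_{\ell}^{m}\,\frac{dP_{\ell}^{m}}{dz}\,\frac{z}{1-z^{2}}\,dz$.

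The remaining cross integral is reduced to single-pole integrals of shifted order through the two ladder relations in $m$. Writing $w=\sqrt{1-z^{2}}$ and $C=(\ell+m)(\ell-m+1)$, these read $\frac{2mz}{w}P_{\ell}^{m}=-\bigl[P_{\ell}^{m+1}+C\,P_{\ell}^{m-1}\bigr]$ and $2w\frac{dP_{\ell}^{m}}{dz}=C\,P_{\ell}^{m-1}-P_{\ell}^{m+1}$, both obtainable directly from the Rodrigues-type definition $P_{\ell}^{m}=(-1)^{m}(1-z^{2})^{m/2}\frac{d^{m}}{dz^{m}}P_{\ell}$. Multiplying the corresponding right-hand sides yields a difference of squares, so the cross-terms cancel identically and $P_{\ell}^{m}\frac{dP_{\ell}^{m}}{dz}\frac{z}{1-z^{2}}=-\frac{1}{4m(1-z^{2})}\bigl[C^{2}\{P_{\ell}^{m-1}\}^{2}-\{P_{\ell}^{m+1}\}^{2}\bigr]$. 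Integrating and invoking (\ref{samaddar_1m}) at orders $m-1$ and $m+1$ gives a closed factorial expression which, after routine simplification, matches the right-hand side of (\ref{111m}).

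The one genuine subtlety, and the step I would check most carefully, is the ladder algebra: pinning down the coefficient $C=(\ell+m)(\ell-m+1)$ and the relative signs exactly (conventions differ by the Condon--Shortley phase, which is included in the paper's definition of $P_{\ell}^{m}$). A one-line sanity check on a low case, e.g. $\ell=1,m=1$ with $P_{1}^{0}=z$ and $P_{1}^{1}=-w$, fixes the signs, and a numerical check of the final factorial identity at, say, $(\ell,m)=(5,2)$ confirms that the form produced by the ladder relations agrees with the stated right-hand side even though the intermediate expression looks different. The endpoint analysis restricts the argument to $|m|\ge 2$, which is precisely the regime where (\ref{111m}) is applied; the poles $\tfrac{1}{m-1}$ and $\tfrac{1}{m+1}$ in the statement already exclude $m=0,\pm1$, so the small-$|m|$ cases are not needed.
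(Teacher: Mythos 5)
Your proposal is correct, and for the main identity (\ref{111m}) it takes a genuinely different route from the paper. The paper's handling of (\ref{diffm}) is identical to yours (subtract the single-pole integral (\ref{samaddar_1m})). For (\ref{111m}), however, the paper does not integrate by parts at all: it substitutes the degree-lowering identity $\frac{1}{\sqrt{1-z^{2}}}P_{\ell }^{m}(z)=-\frac{1}{2m}\bigl[(\ell +m-1)(\ell +m)P_{\ell -1}^{m-1}(z)+P_{\ell -1}^{m+1}(z)\bigr]$, squares it under the weight $(1-z^{2})^{-1}$, kills the cross term by the orthogonality $\int_{-1}^{1}(1-z^{2})^{-1}P_{\ell -1}^{m-1}P_{\ell -1}^{m+1}\,dz=0$, and reads off the answer from (\ref{samaddar_1m}) applied at degree $\ell -1$ and orders $m\pm 1$. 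Your argument instead writes $\frac{2}{(1-z^{2})^{2}}$ as $\frac{d}{dz}\frac{z}{1-z^{2}}+\frac{1}{1-z^{2}}$, integrates by parts (with the boundary term vanishing for $|m|\geq 2$, which is exactly the range where the integral converges and where the Lemma is used, namely in $J_{6}$ and $J_{7}$), and reduces the resulting cross integral via the same-degree ladder relations — the cross integral you produce is precisely the one evaluated in Lemma \ref{ALFcrossproducts}, so your intermediate objects live at degree $\ell$ and orders $m\pm 1$ rather than degree $\ell -1$. I checked that your final factorial expression agrees identically with the stated right-hand side (the difference of the two forms reduces to a polynomial identity in $\ell ,m$ that vanishes). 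What each approach buys: the paper's is shorter and lands directly on the stated form, at the cost of importing an extra identity from Lebedev and an orthogonality fact at shifted degree; yours reuses only machinery already present in the appendix ((\ref{samaddar_1m}), the ladder relations, and in effect (\ref{555m})), with the cross terms cancelling algebraically as a difference of squares rather than by orthogonality, but it requires the endpoint analysis and a final simplification step. Both are valid; no gap.
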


\begin{proof}
\noindent \noindent Formula (\ref{diffm}) follows from (\ref{111m}) and (\ref%
{samaddar_1m}): 
\begin{equation*}
\int_{-1}^{1}\frac{z^{2}}{(1-z^{2})^{2}}(P_{\ell
}^{m}(z))^{2}dz=\int_{-1}^{1}\frac{1}{(1-z^{2})^{2}}(P_{\ell
}^{m}(z))^{2}dz-\int_{-1}^{1}\frac{1}{1-z^{2}}(P_{\ell }^{m}(z))^{2}dz.
\end{equation*}%
To prove (\ref{111m}) we exploit the following identity (see i.e., \cite%
{lebedev}, Section 7.12): 
\begin{equation*}
\frac{1}{\sqrt{1-z^{2}}}P_{\ell }^{m}(z)=-\frac{1}{2m}\left[ (\ell
+m-1)(\ell +m)P_{\ell -1}^{m-1}(z)+P_{\ell -1}^{m+1}(z)\right]
\end{equation*}%
whence%
\begin{equation*}
\int_{-1}^{1}\frac{1}{(1-z^{2})^{2}}\left\{ P_{\ell }^{m}(z)\right\}
^{2}dz=\int_{-1}^{1}\frac{1}{1-z^{2}}\left\{ \frac{P_{\ell }^{m}(z)}{\sqrt{%
1-z^{2}}}\right\} ^{2}dz
\end{equation*}%
\begin{equation*}
=\frac{1}{4m^{2}}\int_{-1}^{1}\frac{1}{1-z^{2}}\{(\ell +m-1)(\ell +m)P_{\ell
-1}^{m-1}(z)+P_{\ell -1}^{m+1}(z)\}^{2}dz
\end{equation*}%
\begin{align*}
& =\frac{(\ell +m-1)^{2}(\ell +m)^{2}}{4m^{2}}\int_{-1}^{1}\frac{1}{1-z^{2}}%
\{P_{\ell -1}^{m-1}(z)\}^{2}dz \\
& \;\;+\frac{(\ell +m-1)(\ell +m)}{2m^{2}}\int_{-1}^{1}\frac{1}{1-z^{2}}%
P_{\ell -1}^{m-1}(z)P_{\ell -1}^{m+1}(z)dz+\frac{1}{4m^{2}}\int_{-1}^{1}%
\frac{1}{1-z^{2}}\{P_{\ell -1}^{m+1}(z)\}^{2}dz;
\end{align*}%
the statement immediately follows by applying twice equation (\ref%
{samaddar_1m}): 
\begin{align*}
& \int_{-1}^{1}\frac{1}{1-z^{2}}\left\{ P_{\ell -1}^{m-1}(z)\right\} ^{2}dz=%
\frac{(\ell +m-2)!}{(m-1)(\ell -m)!},\int_{-1}^{1}\frac{1}{1-z^{2}}\left\{
P_{\ell -1}^{m+1}(z)\right\} ^{2}dz \\
& =\frac{(\ell +m)!}{(m+1)(\ell -m-2)!},
\end{align*}%
and by observing that 
\begin{equation*}
\int_{-1}^{1}\frac{1}{1-z^{2}}P_{\ell -1}^{m-1}(z)P_{\ell -1}^{m+1}(z)dz=0.
\end{equation*}%
\noindent
\end{proof}

\begin{lemma}
\label{ALFcrossproducts}The following analytic expressions hold for all
values of $\ell =1,2,3,\dots$: 
\begin{align}
&\int_{-1}^{1}\frac{z}{1-z^{2}}P_{\ell }^{m}(z)\left\{ \frac{d}{dz}P_{\ell
}^{m}(z)\right\} dz  \notag \\
&=\frac{1}{4m}\left\{ \frac{(\ell +m+1)!}{(m+1)(\ell -m-1)!}-\frac{(\ell
+m)(\ell -m+1)(\ell +m)!}{(m-1)(\ell -m)!}\right\}.  \label{555m}
\end{align}
\end{lemma}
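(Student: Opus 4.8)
The plan is to recognise the integrand as half a total derivative and integrate by parts, reducing (\ref{555m}) to the two integrals of squared associated Legendre functions already computed in the excerpt. Since $P_{\ell}^{m}(z)\,\frac{d}{dz}P_{\ell}^{m}(z)=\tfrac{1}{2}\frac{d}{dz}\{P_{\ell}^{m}(z)\}^{2}$, I would first write
\begin{equation*}
\int_{-1}^{1}\frac{z}{1-z^{2}}P_{\ell}^{m}(z)\frac{d}{dz}P_{\ell}^{m}(z)\,dz=\frac{1}{2}\int_{-1}^{1}\frac{z}{1-z^{2}}\,\frac{d}{dz}\{P_{\ell}^{m}(z)\}^{2}\,dz,
\end{equation*}
and then integrate by parts, moving the derivative off $\{P_{\ell}^{m}\}^{2}$. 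Using $\frac{d}{dz}\big(\frac{z}{1-z^{2}}\big)=\frac{1+z^{2}}{(1-z^{2})^{2}}$, this produces a boundary term $\big[\frac{z}{1-z^{2}}\{P_{\ell}^{m}\}^{2}\big]_{-1}^{1}$ together with the bulk integral $-\tfrac12\int_{-1}^{1}\frac{1+z^{2}}{(1-z^{2})^{2}}\{P_{\ell}^{m}\}^{2}\,dz$.

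The first point to check is that the boundary term drops out. Near $z=\pm1$ one has $P_{\ell}^{m}(z)=O((1-z^{2})^{m/2})$, so $\frac{z}{1-z^{2}}\{P_{\ell}^{m}(z)\}^{2}=O((1-z^{2})^{m-1})$, which tends to $0$ precisely when $|m|\geq 2$. This is exactly the range in which (\ref{555m}) is stated and applied (it is also the range that guarantees convergence of the singular integrals below, since $\{P_{\ell}^{m}\}^{2}/(1-z^{2})^{2}=O((1-z^{2})^{m-2})$), so no boundary contribution survives.

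Next I would split the remaining kernel using $\frac{1+z^{2}}{(1-z^{2})^{2}}=\frac{2}{(1-z^{2})^{2}}-\frac{1}{1-z^{2}}$, which turns the integral into a combination of the two quantities already available:
\begin{equation*}
\int_{-1}^{1}\frac{z}{1-z^{2}}P_{\ell}^{m}\frac{d}{dz}P_{\ell}^{m}\,dz=-\int_{-1}^{1}\frac{\{P_{\ell}^{m}(z)\}^{2}}{(1-z^{2})^{2}}\,dz+\frac{1}{2}\int_{-1}^{1}\frac{\{P_{\ell}^{m}(z)\}^{2}}{1-z^{2}}\,dz.
\end{equation*}
The first integral is supplied by (\ref{111m}) in Lemma \ref{ALFsquares} and the second by (\ref{samaddar_1m}); substituting both gives
\begin{equation*}
-\frac{1}{4m^{2}}\left\{\frac{(\ell+m)(\ell+m-1)(\ell+m)!}{(m-1)(\ell-m)!}+\frac{(\ell+m)!}{(m+1)(\ell-m-2)!}\right\}+\frac{1}{2}\,\frac{(\ell+m)!}{m(\ell-m)!}.
\end{equation*}

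It then remains only to verify that this equals the claimed right-hand side of (\ref{555m}). Writing every factorial ratio in terms of $G:=\frac{(\ell+m)!}{(\ell-m)!}$ (via $\frac{(\ell+m)!}{(\ell-m-2)!}=(\ell-m)(\ell-m-1)\,G$ and $\frac{(\ell+m+1)!}{(\ell-m-1)!}=(\ell+m+1)(\ell-m)\,G$), the desired equality collapses to the polynomial identity
\begin{equation*}
-\frac{1}{m}\left\{\frac{(\ell+m)(\ell+m-1)}{m-1}+\frac{(\ell-m)(\ell-m-1)}{m+1}\right\}+2=\frac{(\ell+m+1)(\ell-m)}{m+1}-\frac{(\ell+m)(\ell-m+1)}{m-1},
\end{equation*}
which is routine to check by clearing the denominators $m(m-1)(m+1)$ and comparing polynomials in $\ell$. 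I expect this factorial bookkeeping, rather than any analytic difficulty, to be the only fiddly part of the argument, in particular the care needed with the conventions (e.g.\ $1/(-1)!=0$) at the edge cases $\ell=m$ and $\ell=m+1$.
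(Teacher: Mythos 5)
Your argument is correct, but it follows a genuinely different route from the paper's. The paper multiplies the two ladder identities
$zP_{\ell }^{m}(z)=-\frac{\sqrt{1-z^{2}}}{2m}\left[ (\ell +m)(\ell -m+1)P_{\ell }^{m-1}(z)+P_{\ell }^{m+1}(z)\right]$ and
$\sqrt{1-z^{2}}\,\frac{d}{dz}P_{\ell }^{m}(z)=\frac{1}{2}\left[ (\ell +m)(\ell -m+1)P_{\ell }^{m-1}(z)-P_{\ell }^{m+1}(z)\right]$,
so that the integrand becomes $-\frac{1}{4m(1-z^{2})}$ times a difference of squares of $P_{\ell }^{m\pm 1}$ with no cross term, and then applies (\ref{samaddar_1m}) twice at orders $m-1$ and $m+1$; the result drops out with essentially no algebra. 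You instead write $P_{\ell}^{m}\frac{d}{dz}P_{\ell}^{m}$ as half a total derivative, integrate by parts, and reduce to the already-computed integrals (\ref{111m}) and (\ref{samaddar_1m}) at order $m$ itself. Your route is more elementary in that it uses no special-function identities beyond what Lemma \ref{ALFsquares} already provides, but it imports that lemma as a prerequisite (whose proof in the paper uses the same ladder identity), requires the explicit boundary-term and convergence discussion at $z=\pm 1$ (which you handle correctly, and which correctly identifies the implicit restriction $|m|\geq 2$ under which the formula is stated and used), and shifts the work into the final factorial identity — which I have checked and which does hold, both sides reducing to $-2\ell(\ell+1)m$ after clearing $m(m^{2}-1)$. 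Minor cosmetic point: the boundary term you display is missing the factor $\tfrac{1}{2}$ coming from $P\frac{d}{dz}P=\tfrac{1}{2}\frac{d}{dz}P^{2}$, which is immaterial since it vanishes.
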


\begin{proof}
We first note that 
\begin{align*}
& zP_{\ell }^{m}(z)=-\frac{\sqrt{1-z^{2}}}{2m}\left[ (\ell +m)(\ell
-m+1)P_{\ell }^{m-1}(z)+P_{\ell }^{m+1}(z)\right] , \\
& \sqrt{1-z^{2}}\frac{d}{dz}P_{\ell }^{m}(z)=\frac{1}{2}\left[ (\ell
+m)(\ell -m+1)P_{\ell }^{m-1}(z)-P_{\ell }^{m+1}(z)\right] ,
\end{align*}%
so that%
\begin{equation*}
\int_{-1}^{1}\frac{z}{1-z^{2}}P_{\ell }^{m}(z)\left\{ \frac{d}{dz}P_{\ell
}^{m}(z)\right\} dz
\end{equation*}%
\begin{equation*}
=-\frac{1}{4m}\int_{-1}^{1}\frac{1}{1-z^{2}}\left[ (\ell +m)^{2}(\ell
-m+1)^{2}\{P_{\ell }^{m-1}(z)\}^{2}-\{P_{\ell }^{m+1}(z)\}^{2}\right] dz
\end{equation*}%
\begin{equation*}
=-\frac{(\ell +m)^{2}(\ell -m+1)^{2}}{4m}\int_{-1}^{1}\frac{1}{1-z^{2}}%
\left\{ P_{\ell }^{m-1}(z)\right\} ^{2}dz+\frac{1}{4m}\int_{-1}^{1}\frac{1}{%
1-z^{2}}\left\{ P_{\ell }^{m+1}(z)\right\} ^{2}dz
\end{equation*}%
and, by applying (\ref{samaddar_1m}), we immediately have the statement.
\end{proof}

\begin{lemma}
\label{ALFsquaredderivatives}The following analytic expressions hold for all
values of $\ell =1,2,3, \dots$: 
\begin{equation}
\int_{-1}^{1}\left\{ \frac{d}{dz}P_{\ell }^{m}(z)\right\} ^{2}dz=\frac{1}{4}%
\left\{ \frac{(\ell +m)(\ell -m+1)(\ell +m)!}{(m-1)(\ell -m)!}+\frac{(\ell
+m+1)!}{(m+1)(\ell -m-1)!}\right\},  \label{444m}
\end{equation}
\begin{align}
\int_{-1}^{1}(1-z^{2})\left\{ \frac{d}{dz}P_{\ell }^{m}(z)\right\} ^{2}dz &=%
\frac{1}{(2\ell +1)^{2}}\Big\{ \frac{(\ell +1)^{2}(\ell +m)(\ell +m)!}{%
m(\ell -m-1)!}-2\frac{\ell (\ell +1)(\ell -m+1)(\ell +m)!}{m(\ell -m-1)!} 
\notag \\
&+\frac{\ell ^{2}(\ell -m+1)^{2}(\ell +m+1)!}{m(\ell -m+1)!}\Big\},
\label{333m}
\end{align}%
\begin{align}
\int_{-1}^{1}z^{2}\left\{ \frac{d}{dz}P_{\ell }^{m}(z)\right\} ^{2}dz &=-%
\frac{1}{(2\ell +1)^{2}}\Big\{ \frac{(\ell +1)^{2}(\ell +m)(\ell +m)!}{
m(\ell -m-1)!}-2\frac{\ell (\ell +1)(\ell -m+1)(\ell +m)!}{m(\ell -m-1)!} 
\notag \\
&+ \frac{\ell ^{2}(\ell -m+1)^{2}(\ell +m+1)!}{m(\ell -m+1)!}\Big\} +\frac{1%
}{4}\Big\{ \frac{(\ell +m)(\ell -m+1)(\ell +m)!}{(m-1)(\ell -m)!}  \notag \\
&+\frac{(\ell +m+1)!}{(m+1)(\ell -m-1)!}\Big\} .  \label{J8m}
\end{align}
\end{lemma}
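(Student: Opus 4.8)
The plan is to reduce all three integrals to the single known evaluation (\ref{samaddar_1m}), namely $\int_{-1}^{1}\{P_\ell^m(z)\}^2/(1-z^2)\,dz=(\ell+m)!/[m(\ell-m)!]$, by inserting the same ladder relations for associated Legendre functions already used in the proofs of Lemmas \ref{ALFsquares} and \ref{ALFcrossproducts}, and then exploiting the order-orthogonality $\int_{-1}^{1}P_\ell^m P_\ell^{m'}/(1-z^2)\,dz=0$ for $m\neq m'$ to discard cross terms. For (\ref{444m}) I would start from $\sqrt{1-z^2}\,\frac{d}{dz}P_\ell^m=\frac12[(\ell+m)(\ell-m+1)P_\ell^{m-1}-P_\ell^{m+1}]$ and write $\{\frac{d}{dz}P_\ell^m\}^2=\{\sqrt{1-z^2}\,\frac{d}{dz}P_\ell^m\}^2/(1-z^2)$. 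Squaring the bracket and integrating termwise, the mixed product $\int P_\ell^{m-1}P_\ell^{m+1}/(1-z^2)\,dz$ vanishes by order-orthogonality (since $m-1\neq m+1$), while the two squares are given directly by (\ref{samaddar_1m}) at orders $m-1$ and $m+1$; collecting the factorials produces (\ref{444m}).

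Once (\ref{444m}) and (\ref{333m}) are in hand, (\ref{J8m}) is immediate from $z^2=1-(1-z^2)$, which gives $\int z^2\{\frac{d}{dz}P_\ell^m\}^2\,dz=\int\{\frac{d}{dz}P_\ell^m\}^2\,dz-\int(1-z^2)\{\frac{d}{dz}P_\ell^m\}^2\,dz$, i.e. exactly (\ref{444m}) minus (\ref{333m}) as displayed. So the whole Lemma rests on (\ref{333m}).

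For (\ref{333m}) I would use the complementary relation $(1-z^2)\frac{d}{dz}P_\ell^m=\frac{1}{2\ell+1}[(\ell+1)(\ell+m)P_{\ell-1}^m-\ell(\ell-m+1)P_{\ell+1}^m]$ and again write $(1-z^2)\{\frac{d}{dz}P_\ell^m\}^2=\{(1-z^2)\frac{d}{dz}P_\ell^m\}^2/(1-z^2)$. The two diagonal squares, evaluated by (\ref{samaddar_1m}) at degrees $\ell-1$ and $\ell+1$, reproduce the outer two summands with the $(2\ell+1)^{-2}$ prefactor. The main obstacle is the cross term: here the two factors share the same order $m$ but differ in degree, so order-orthogonality no longer applies and $\int P_{\ell-1}^m P_{\ell+1}^m/(1-z^2)\,dz$ does \emph{not} vanish; this mixed integral is the genuine source of the middle summand of (\ref{333m}) and is the delicate point.

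To resolve it I would circumvent the mixed integral rather than compute it head-on: integrating $\int(1-z^2)\{\frac{d}{dz}P_\ell^m\}^2\,dz$ by parts and using the associated Legendre equation in self-adjoint form $\frac{d}{dz}[(1-z^2)\frac{d}{dz}P_\ell^m]=[m^2/(1-z^2)-\ell(\ell+1)]P_\ell^m$ turns it into $-m^2\int\{P_\ell^m\}^2/(1-z^2)\,dz+\ell(\ell+1)\int\{P_\ell^m\}^2\,dz$, the boundary term vanishing because the factor $(1-z^2)$ annihilates the endpoints. This expresses (\ref{333m}) through (\ref{samaddar_1m}) and the standard normalization $\int\{P_\ell^m\}^2\,dz=\frac{2}{2\ell+1}(\ell+m)!/(\ell-m)!$ alone, with no cross-degree term; the degree-shift recurrence is then used only to recast this value into the displayed $(2\ell+1)^{-2}$ form, which amounts to reading off the now-determined value of $\int P_{\ell-1}^m P_{\ell+1}^m/(1-z^2)\,dz$ by comparison. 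A final bookkeeping check is that the $m=0,\pm1$ cases, where the $(m\mp1)$ denominators are singular, are consistent with the separate evaluations recorded for $J_1,\dots,J_8$ at those orders.
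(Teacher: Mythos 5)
Your proposal is correct, and for (\ref{444m}) and (\ref{J8m}) it coincides with the paper's argument: the same ladder relation $\sqrt{1-z^2}\,\frac{d}{dz}P_\ell^m=\frac12[(\ell+m)(\ell-m+1)P_\ell^{m-1}-P_\ell^{m+1}]$, the same use of order-orthogonality to kill the mixed term, the same reduction to (\ref{samaddar_1m}), and the same identity $z^2=1-(1-z^2)$ for the last display. Where you genuinely diverge is (\ref{333m}), and you have correctly isolated the delicate point: after inserting $(1-z^2)\frac{d}{dz}P_\ell^m=\frac{1}{2\ell+1}[(\ell+1)(\ell+m)P_{\ell-1}^m-\ell(\ell-m+1)P_{\ell+1}^m]$ the cross term involves two functions of the \emph{same} order $m$ but different degrees, so it does not vanish. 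The paper resolves this by importing the external evaluation $\int_{-1}^{1}P_{\ell-1}^m(z)P_{\ell+1}^m(z)(1-z^2)^{-1}dz=\frac{(\ell+m-1)!}{m(\ell-m-1)!}$ from Samaddar's formula (24i); you instead bypass the cross term entirely by integrating by parts and invoking the Legendre equation in Sturm--Liouville form, which yields $\int_{-1}^{1}(1-z^2)\{\frac{d}{dz}P_\ell^m\}^2dz=-m^2\int_{-1}^{1}\frac{\{P_\ell^m\}^2}{1-z^2}dz+\ell(\ell+1)\int_{-1}^{1}\{P_\ell^m\}^2dz=\frac{(\ell+m)!}{(\ell-m)!}\bigl[\frac{2\ell(\ell+1)}{2\ell+1}-m\bigr]$, the boundary term vanishing as you say (for $m\geq 1$ because $P_\ell^m\sim(1-z^2)^{m/2}$ at the endpoints, and for $m=0$ because of the explicit factor $(1-z^2)$). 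This value agrees with the paper's right-hand side of (\ref{333m}) (e.g.\ both give $42/5$ for $\ell=2$, $m=1$), and your route has two advantages: it is self-contained, deriving Samaddar's (24i) as a by-product rather than citing it, and it produces a closed form valid for all $m$ including $m=0,\pm1$, where the displayed factorial expressions are singular and the paper must fall back on the separately recorded values of $J_1$ and $J_8$. The one cosmetic caveat, which you already flag, is that matching the literal three-term $(2\ell+1)^{-2}$ display still requires the degree-shift expansion, but only as bookkeeping once the value of the integral is known.
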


\begin{proof}
To prove (\ref{444m}) we use 
\begin{equation*}
\frac{d}{dz}P_{\ell }^{m}(z)=\frac{1}{2\sqrt{1-z^{2}}}\left\{ (\ell +m)(\ell
-m+1)P_{\ell }^{m-1}(z)-P_{\ell }^{m+1}(z)\right\}
\end{equation*}%
so that we may write 
\begin{equation*}
\int_{-1}^{1}\left\{ \frac{d}{dz}P_{\ell }^{m}(z)\right\} ^{2}dz=\frac{1}{4}%
\int_{-1}^{1}\frac{1}{1-z^{2}}\left\{ (\ell +m)(\ell -m+1)P_{\ell
}^{m-1}(z)-P_{\ell }^{m+1}(z)\right\} ^{2}dz
\end{equation*}%
\begin{equation*}
=\frac{(\ell +m)^{2}(\ell -m+1)^{2}}{4}\int_{-1}^{1}\frac{1}{1-z^{2}}%
\{P_{\ell }^{m-1}(z)\}^{2}dz
\end{equation*}%
\begin{equation*}
+\frac{1}{4}\int_{-1}^{1}\frac{1}{1-z^{2}}\left\{ P_{\ell }^{m+1}(z)\right\}
^{2}dz-\frac{(\ell +m)(\ell -m+1)}{2}\int_{-1}^{1}\frac{1}{1-z^{2}}P_{\ell
}^{m-1}(z)P_{\ell }^{m+1}(z)dz.
\end{equation*}%
Formula (\ref{444m}) then follows by observing that, from (\ref{samaddar_1m}%
), we have 
\begin{equation*}
\frac{(\ell +m)^{2}(\ell -m+1)^{2}}{4}\int_{-1}^{1}\frac{1}{1-z^{2}}\left\{
P_{\ell }^{m-1}(z)\right\} ^{2}dz=\frac{(\ell +m)^{2}(\ell -m+1)^{2}}{4}%
\frac{(\ell +m-1)!}{(m-1)(\ell -m+1)!},
\end{equation*}%
and 
\begin{equation*}
\frac{1}{4}\int_{-1}^{1}\frac{1}{1-z^{2}}\left\{ P_{\ell }^{m+1}(z)\right\}
^{2}dz=\frac{1}{4}\frac{(\ell +m+1)!}{(m+1)(\ell -m-1)!};
\end{equation*}%
and moreover 
\begin{equation*}
\int_{-1}^{1}\frac{1}{1-z^{2}}P_{\ell }^{m-1}(z)P_{\ell }^{m+1}(z)dz=0.
\end{equation*}%
To prove (\ref{333m}), we apply the following identity, see \cite{lebedev},
Section 7.12: 
\begin{equation*}
(1-z^{2})\frac{d}{dz}P_{\ell }^{m}(z)=\frac{1}{2\ell +1}\left\{ (\ell
+1)(\ell +m)P_{\ell -1}^{m}(z)-\ell (\ell -m+1)P_{\ell +1}^{m}(z)\right\}
\end{equation*}%
from which we obtain%
\begin{equation*}
\int_{-1}^{1}(1-z^{2})\left\{ \frac{d}{dz}P_{\ell }^{m}(z)\right\} ^{2}dz
\end{equation*}%
\begin{equation*}
=\frac{1}{(2\ell +1)^{2}}\int_{-1}^{1}\frac{1}{1-z^{2}}\{(\ell +1)(\ell
+m)P_{\ell -1}^{m}(z)-\ell (\ell -m+1)P_{\ell +1}^{m}(z)\}^{2}dz
\end{equation*}%
\begin{align*}
& =\frac{(\ell +1)^{2}(\ell +m)^{2}}{(2\ell +1)^{2}}\int_{-1}^{1}\frac{1}{%
1-z^{2}}\{P_{\ell -1}^{m}(z)\}^{2}dz \\
& \;\;-2\frac{(\ell +1)(\ell +m)\ell (\ell -m+1)}{(2\ell +1)^{2}}%
\int_{-1}^{1}\frac{1}{1-z^{2}}P_{\ell -1}^{m}(z)P_{\ell +1}^{m}(z)dz \\
& \;\;+\frac{\ell ^{2}(\ell -m+1)^{2}}{(2\ell +1)^{2}}\int_{-1}^{1}\frac{1}{%
1-z^{2}}\left\{ P_{\ell +1}^{m}(z)\right\} ^{2}dz.
\end{align*}%
Formula (\ref{333m}) follows by applying again (\ref{samaddar_1m}), which
gives 
\begin{equation*}
\int_{-1}^{1}\frac{1}{1-z^{2}}\left\{ P_{\ell -1}^{m}(z)\right\} ^{2}dz=%
\frac{(\ell +m-1)!}{m(\ell -m-1)!},\int_{-1}^{1}\frac{1}{1-z^{2}}\left\{
P_{\ell +1}^{m}(z)\right\} ^{2}=\frac{(\ell +m+1)!}{m(\ell -m+1)!},
\end{equation*}%
and \cite{samaddar}, formula (24i), which gives 
\begin{equation*}
\int_{-1}^{1}\frac{1}{1-z^{2}}P_{\ell -1}^{m}(z)P_{\ell +1}^{m}(z)dz=\frac{%
(\ell +m-1)!}{m(\ell -m-1)!}.
\end{equation*}%
\noindent Finally, to prove (\ref{J8m}) it is sufficient to note that%
\begin{equation*}
\int_{-1}^{1}z^{2}\left\{ \frac{d}{dz}P_{\ell }^{m}(z)\right\}
^{2}dz=-\int_{-1}^{1}(1-z^{2})\left\{ \frac{d}{dz}P_{\ell }^{m}(z)\right\}
^{2}dz+\int_{-1}^{1}\left\{ \frac{d}{dz}P_{\ell }^{m}(z)\right\} ^{2}dz
\end{equation*}%
\begin{equation*}
=-\frac{1}{(2\ell +1)^{2}}\Big\{\frac{(\ell +1)^{2}(\ell +m)(\ell +m)!}{%
m(\ell -m-1)!}-2\frac{\ell (\ell +1)(\ell -m+1)(\ell +m)!}{m(\ell -m-1)!}
\end{equation*}%
\begin{equation*}
+\frac{\ell ^{2}(\ell -m+1)^{2}(\ell +m+1)!}{m(\ell -m+1)!}\Big\}+\frac{1}{4}%
\Big\{\frac{(\ell +m)(\ell -m+1)(\ell +m)!}{(m-1)(\ell -m)!}+\frac{(\ell
+m+1)!}{(m+1)(\ell -m-1)!}\Big\}.
\end{equation*}
\end{proof}

\end{document}